\documentclass[12pt,reqno]{amsart}
\usepackage{enumitem}
\usepackage{amssymb}
\usepackage{graphicx}
\usepackage{amsfonts}
\usepackage{amsmath}
\usepackage[unicode]{hyperref}
\usepackage{verbatim}
\usepackage{url}
\usepackage[usenames]{color}
\usepackage{stmaryrd}
\usepackage{latexsym}
\usepackage{prooftree}
\usepackage[all]{xy}

\newif\ifarxiv
\arxivtrue


\pagestyle{myheadings}


\makeatletter
\@namedef{subjclassname@2020}{%
  \textup{2020} Mathematics Subject Classification}
\makeatother


\newtheorem{thm}{Theorem}[section]
\newtheorem{lemma}[thm]{Lemma}
\newtheorem{proposition}[thm]{Proposition}

\newtheorem{fact}[thm]{Fact}






\theoremstyle{definition}
\newtheorem{definition}[thm]{Definition}
\newtheorem{remark}[thm]{Remark}




\numberwithin{equation}{section}


\frenchspacing

\textwidth=13.5cm
\textheight=23cm
\parindent=16pt
\oddsidemargin=-0.5cm
\evensidemargin=-0.5cm
\topmargin=-0.5cm



\DeclareRobustCommand{\VAN}[3]{#2} 

\newcommand\Sober{\mathcal S}
\newcommand\Hoare{\mathcal H}

\newcommand\limp{\mathrel{\Rightarrow}}
\newcommand\nat{\mathbb{N}}

\newcommand\pow{\mathop{\mathbb P}}
\newcommand\Pfin{\pow_{fin}}
\newcommand\dom{\mathop{\mathrm{dom}}}

\newcommand\upc{\mathop{\uparrow}\nolimits}
\newcommand\dc{\mathop{\downarrow}\nolimits}
\newcommand\uuarrow{\rlap{$\uparrow$}\raise.5ex\hbox{$\uparrow$}}
\newcommand\ddarrow{\rlap{$\downarrow$}\raise.5ex\hbox{$\downarrow$}}
\newcommand\suf{\mathop{\text{suf}}}
\newcommand\pref{\mathop{\text{pref}}}




\newcommand\Img{\mathop{\mathrm{Im}}\nolimits}

\newcommand\Eval[1]{\left\llbracket{#1}\right\rrbracket}
\newcommand\SEval[1]{\Eval{#1}} 
\newcommand\meet{\mathrel{\wedge}}
\newcommand\topcap{\tau}

\newcommand\infy[1]{(\infty) {#1}}
%

\newcommand\eqdef{\mathrel{\buildrel \text{def}\over=}}
\newcommand\diff{\smallsetminus}
\newcommand\identity[1]{\mathrm{id}_{#1}}


\begin{document}


\baselineskip=17pt


\title{Infinitary Noetherian Constructions I. Infinite Words}

\author{Jean Goubault-Larrecq\\
  Universit\'e Paris-Saclay, ENS Paris-Saclay, CNRS, LSV\\
  91190, Gif-sur-Yvette, France\\
  E-mail: \url{goubault@lsv.fr}}
\thanks{The author was supported by grant ANR-17-CE40-0028 of the
  French National Research Agency ANR (project BRAVAS)}

\date{}


\begin{abstract}
  We define and study Noetherian topologies for spaces of infinite
  sets and for spaces of infinite words.  In each case, we also obtain
  S-representations, namely, computable presentations of the
  sobrifications of those spaces.
\end{abstract}

\subjclass[2020]{Primary 54G99; 
  Secondary 06A07
  }

\keywords{Noetherian spaces, well-quasi-orders, infinite words.}

\maketitle
\markboth{J. Goubault-Larrecq}{Infinite Words}

\section{Introduction}
\label{sec:intro}

A quasi-ordering, a.k.a.\ a preordering, is a reflexive and transitive relation.
A \emph{well-quasi-order} (\emph{wqo}) is a quasi-ordered set in which
every infinite sequence ${(x_n)}_{n \in \nat}$ must contain two
elements $x_m \leq x_n$ with $m < n$.  Well-quasi-orders are a
fundamental tool in mathematics and computer science, however they are
not closed under several infinitary constructions; e.g., the set of
all subsets of a wqo is not in general wqo \cite{Rado:wqo}, and a
similar problem plagues sets of infinite words, and of infinite trees,
over a well-quasi-ordered alphabet.  Nash-Williams discovered that a
strengthening of the notion of wqo, the notion of \emph{better
  quasi-orders} (\emph{bqo}), was closed under the usual finitary
constructions that preserved being wqo (finite words, finite trees,
etc.), and also under their infinitary variants \cite{NW:bqo}.

A \emph{Noetherian space} is a topological space in which every open
set is compact, i.e., in which every open cover of an open set
contains a finite subcover---we do not assume any separation axiom
here.  It was observed in \cite{Gou-lics07} that Noetherian spaces
formed a natural topological generalization of the order-theoretic
notion of wqo.  Noetherian spaces are closed under the same finitary
constructions as wqos (finite words under embedding, finite trees
under homeomorphic embedding, etc., see
\cite[Section~9.7]{JGL-topology}), but also under some infinitary
constructions.  In \cite{Gou-lics07}, notably, we remarked that the
so-called Hoare powerdomain of a Noetherian space---equivalently, its
powerset under the so-called lower Vietoris topology---is Noetherian.

The main purpose of this paper is to show that Noetherianness is
preserved under some of the usual infinitary constructions that
spurred the invention of bqos.

A secondary purpose is to design those
infinitary constructions in such a way that the closed subsets have
finite representations suitable for an implementation on a computer.
We do not mean to develop this here, but one should note that
infinite words are pervasive in verification, mostly as infinite runs
in various forms of automata, such as B\"uchi automata, see
\cite{Thomas:aut:inf} for a survey.  Downwards-closed subsets of
runs, a very closely related notion, were instrumental in the
pioneering paper by Leroux and Schmitz on the complexity of VASS
reachability \cite{DBLP:conf/lics/LerouxS15}, too.

Let us illustrate our goal by an example of a finitary construction,
taken from \cite[Section~9.7]{JGL-topology} and
\cite[Section~7]{FGL:compl}.  Let $X^*$ denote the set of finite words
over an alphabet $X$ (not necessarily finite).  For every
quasi-ordering $\leq$ on $X$, the (scattered) word embedding
quasi-ordering $\leq^*$ on $X^*$ is defined by $w \leq^* w'$ if and
only if $w'$ can be obtained from $w$ by increasing some letters from
$w$ and by inserting arbitrarily many new letters at arbitrary
positions.  Higman's Lemma \cite{Higman:Lemma} states that $\leq^*$ is
a well-quasi-ordering if and only if $\leq$ is.  Similarly, the
\emph{word topology} on $X^*$, where $X$ is now a topological space,
is generated by basic open sets of the form
$\langle U_1; U_2; \cdots ; U_n \rangle \eqdef X^* U_1 X^* U_2 \cdots
X^* U_n X^*$, where $n \in \nat$ and each $U_i$ is open in $X$---those
are the sets of words that contain a letter in $U_1$, then a letter in
$U_2$ to the right of the previous one, and so on, until we find a
letter in $U_n$.  (Note that they form a base, not just a subbase.)
Then the following hold (all required notions will be introduced in
Section~\ref{sec:preliminaries}):
\begin{enumerate}[label=(\Alph*)]
\item $X$ is Noetherian if and only if $X^*$ (with the word topology)
  is Noetherian.
\item The specialization preordering of $X^*$ is $\leq^*$, where
  $\leq$ denotes the specialization preordering of $X$.
\item If $X$ is wqo, then so is $X^*$.
\item If $X$ has an S-representation (a certain, computable, way of
  representing the irreducible closed subsets of $X$, and therefore
  all closed subsets of $X$), then $X^*$ has an S-representation, too.
\end{enumerate}
We wish to obtain similar results for infinitary constructions, e.g.,
spaces of infinite words.  Our proposals will allow us to obtain
equivalents of (A) and (D).  (B) will only hold if $X$ is wqo, and (C)
will hold if and only if $X$ is essentially finite (see below);
however negative the latter result seems, one should note that we
define a topological space as \emph{wqo} if and only if its
specialization preordering is a wqo \emph{and} its topology is
Alexandroff---that is a pretty strong requirement.


\paragraph{\emph{Outline.}}
After some preliminaries in Section~\ref{sec:preliminaries}, we
examine the case of the powerset $\pow (X)$ for Noetherian $X$.  That
$\pow (X)$ is Noetherian in that case is not new \cite{Gou-lics07,
  JGL-topology,FGL:compl}, but it seems important to understand why.
This will occupy Section~\ref{sec:pow:AD}, in which we will deal with
properties (A) and (D) in that case.  In Section~\ref{sec:pow:BC}, we
examine properties (B) and (C).  That is new.  As promised, property
(B) will hold only when $X$ is wqo, and (C) only when $X$ is
essentially finite We then make a small detour and introduce a few
useful results pertaining to so-called initial maps in
Section~\ref{sec:initial-maps}.  With all that in our hands, we will
proceed to show that the space $X^\omega$ of all infinite words over
$X$, with a natural topology, enjoys properties (A) through (D)---in
the case of (B) and (C), exactly with the same restrictions on $X$ as
above.

\section{Preliminaries}
\label{sec:preliminaries}

We have already defined well-quasi-orders.  They can be defined in
many equivalent ways.  Notably, as already observed by Higman
\cite[Theorem~2.1]{Higman:Lemma}, it is equivalent to require any of
the following properties, for a quasi-ordered set $X$: (i) every
upwards-closed subset of $X$ is the upward-closure of a finite set;
(ii) the lattice of upwards-closed subsets of $X$ has the ascending
chain condition, namely: every ascending sequence
${(U_n)}_{n \in \nat}$ of upwards-closed subsets is stationary, in
other words, there is a rank $n_0$ such that $U_n = U_{n_0}$ for every
$n \geq n_0$ (in general, we say that a quasi-ordering $\leq$ has the
\emph{ascending chain condition} if and only if it has no strictly
ascending infinite sequence $x_0 < x_1 < \cdots < x_n < \cdots$, where
$x < y$ means $x \leq y$ and $y \not\leq x$); (iv) every infinite
sequence of elements of $X$ has an infinite ascending subsequence; (v)
$X$ is wqo.  (We omit characterizations (iii) and (vi), which we will
not require.)

Let us turn to topology, for which we refer the reader to
\cite{JGL-topology}.  Noetherian spaces are specifically covered in
Section~9.7 there.  Note that none of the topologies we will consider
are Hausdorff.  In fact, a Hausdorff topological space is Noetherian
if and only if it is finite.

A \emph{subbase} of a topology is any family of open sets that
generates the family.  A \emph{base} of a topology is a family of open
sets such that every open set can be written as a union of basic open
sets.  We write $cl (A)$ for the closure of a subset $A$ of a
topological space.  We will often use the fact that $cl (A)$
intersects an open set $U$ if and only if $A$ intersects $U$.

Noetherian spaces have many equivalent characterizations (compare with
the equivalent characterizations (i)--(vi) of wqos mentioned earlier).
Those are also the spaces in which every ascending sequence
${(U_n)}_{n \in \nat}$ of open subsets is stationary; or also the
spaces in which every descending sequence ${(C_n)}_{n \in \nat}$ of
closed subsets is stationary.  The first of those characterizations
shows that Noetherianness is a property that depends only on the
lattice of open subsets of the space, not on its point.

Noetherian spaces are closed under finite products, finite coproducts,
subspaces, under the process of replacing the topology by a coarser
one, under images by continuous maps, and various other constructions,
such as the $X^*$ construction.

Every topological space has a \emph{specialization} preordering
$\leq$, defined by $x \leq y$ if and only if every open neighborhood
of $x$ contains $y$.  We then say that $x$ is less than or equal to
$y$, or \emph{below} $y$, or that $y$ is larger than or equal to $x$,
or \emph{above} $x$.  The closure of $\{x\}$ is the \emph{principal
  ideal} $\dc x$, namely the set of all points below $x$ in that
quasi-ordering.  (Symmetrically, we write $\upc x$ for the set of all
points above $x$.)  An \emph{Alexandroff topology} is a topology in
which every intersection of open subsets is open, or equivalently, in
which the open subsets are exactly the upwards-closed subsets in the
specialization preordering $\leq$.  The Alexandroff topology of a
given quasi-ordering $\leq$ is, correspondingly, the collection of all
its upwards-closed sets.  Among the topologies with a given
specialization preordering $\leq$, the Alexandroff topology is the
finest, and the coarsest is the \emph{upper topology}, whose closed
sets are intersections of sets of the form $\dc E$, $E$ finite; the
notation $\dc E$ denotes $\bigcup_{x \in E} \dc x$.

There is some degree of ambiguity in a notation such as $\dc E$, which
will be particularly apparent when we work in spaces of subsets.  For
$E \in \pow (X)$, where $\pow (X)$ is equipped with the inclusion
ordering, say, $\dc E$ might denote $\{E' \in \pow (X) \mid E'
\subseteq E\}$ or $\{x \in X \mid \exists y \in E, x \leq y\}$.  In
such cases, we will disambiguate by writing $\dc_{\pow (X)} E$ for the
first set, and $\dc_X E$ for the second one.

It turns out that a quasi-ordering $\leq$ is a well-quasi-ordering if
and only if its Alexandroff topology is Noetherian
\cite[Proposition~9.7.17]{JGL-topology}.
For short, we will say that a topological space is a \emph{wqo} if and
only if it is Noetherian and its topology is the Alexandroff topology,
equivalently if and only if its topology is the Alexandroff topology
of a well-quasi-ordering.

A subset $C$ of a topological space $X$ is \emph{irreducible} if and
only if it is non-empty, and for all closed subsets $C_1$, $C_2$ of
$X$ such that $C \subseteq C_1 \cup C_2$, we have $C \subseteq C_1$ or
$C \subseteq C_2$.  Equivalently: $C$ is non-empty, and for all open
subsets $U_1$, $U_2$ of $X$ that intersect $C$, $U_1 \cap U_2$ also
intersects $C$.

A \emph{sober space} is a topological space in which every irreducible
closed subset is the closure $cl (\{x\}) = \dc x$ of a unique point
$x$.  (Chapter~8 of \cite{JGL-topology} is all about sober spaces.)
The (standard) \emph{sobrification} $\Sober X$ of a topological space
$X$ is its set of irreducible closed subsets, with the
\emph{hull-kernel topology}, whose open subsets are (exactly) the sets
of the form
$\diamond U \eqdef \{C \in \Sober X \mid C \cap U \neq \emptyset\}$,
where $U$ ranges over the open subsets of $X$.  The specialization
(quasi-)ordering of $\Sober X$ is inclusion.  $\Sober X$ is always
sober, there is a continuous map
$\eta_X \colon X \to \Sober X \colon x \mapsto \dc x$, and for every
continuous map $f \colon X \to Y$ where $Y$ is sober, there is a
unique continuous map $\hat f \colon \Sober X \to Y$ such that
$\hat f \circ \eta_X = f$.

$\Sober$ defines a endofunctor on the category of topological spaces,
and its action on morphisms is defined by $\Sober (f) (C) \eqdef cl (f
[C])$, where $f [C]$ denotes the image of $C$ under $f$.  In
particular, $cl (f [C])$ is irreducible closed for every irreducible
closed set $C$ and every continuous map $f$.

Sober spaces are closed under arbitrary topological products.
Furthermore, the sobrification of any product of spaces is
homeomorphic to the product of the sobrifications.  Explicitly, and in
the binary case, given any two irreducible closed subsets $C$ of $X$
and $C'$ of $Y$, $C \times C'$ is irreducible closed in $X \times Y$.
Moreover, all irreducible closed subsets of $X \times Y$ are of this
form: $(C, C') \mapsto C \times C'$ is the indicated homeomorphism
from $\Sober (X) \times \Sober (Y)$ to $\Sober (X \times Y)$.

A space is Noetherian if and only if its sobrification is Noetherian.
Indeed, the map $U \mapsto \diamond U$ is an order-isomorphism, hence
the lattice of open sets of $X$ has the ascending chain condition if
and only if the lattice of open sets of $\Sober X$ has it as well.

We say that a quasi-ordered set (resp., a topological space) is
\emph{essentially finite} if and only if it has only finitely many
upwards-closed subsets (resp., open subsets).  Note that the topology
of an essentially finite topological space is Alexandroff, and
trivially Noetherian.  A topological space $X$ is essentially finite
if and only if its $T_0$ quotient, namely the quotient $X / \equiv$
where $\equiv \eqdef \leq \cap \geq$, is finite.

The sober Noetherian spaces are particularly interesting, as they can
be characterized entirely in terms of their specialization
preordering.  Explicitly, the sober Noetherian spaces are exactly
the sets $X$ with a well-founded quasi-ordering $\leq$ such that every
finite intersection of principal ideals can be expressed as a finite
union of principal ideals (a quasi-ordering $\leq$ is
\emph{well-founded} if and only if every strictly descending chain is
finite); furthermore, the topology of $X$ is uniquely determined as
the upper topology of $\leq$.  In that case, the closed subsets are
exactly the sets of the form $\dc E$ with $E$ \emph{finite}, which
makes them suitable for a representation on a computer---provided all
the elements of $E$ are themselves representable.

As a corollary, the closed subsets $C$ of a Noetherian space $X$ are
exactly the finite unions of irreducible closed subsets $C_1$,
\ldots, $C_n$ of $X$.  Indeed, given any closed subset $C$ of $X$, the
set $\dc_{\Sober X} C$ of all irreducible closed subsets of $X$ below
(included in) $C$ is equal to $\Sober X \diff \diamond (X \diff C)$,
hence is closed in $\Sober X$.  Also, $\eta_X^{-1} (\dc \{C\}) = C$.
Since $\Sober X$ is Noetherian, one can write $\dc \{C\}$ as
$\dc \{C_1, \cdots, C_n\}$ for finitely many points $C_1$, \ldots,
$C_n$ of $\Sober X$, and then $C = \eta_X^{-1} (\dc \{C\})$ is the
union of the finitely many irreducible closed subsets
$\eta_X^{-1} (\dc \{C_i\}) = C_i$, $1\leq i\leq n$.

We will be interested in computer representations of irreducible
closed subsets of $X$ (i.e., of elements of $\Sober (X)$), and this
will immediately allow us to represent all closed subsets $C$ as
finite sets $\{C_1, \cdots, C_n\}$, where each $C_i$ is in
$\Sober (X)$.  If we can decide inclusion of irreducible closed
subsets, one can also decide the inclusion of arbitrary closed
subsets: if $C$ is represented by the finite set
$\{C_1, \cdots, C_m\}$ and $C'$ is represented by the finite set
$\{C'_1, \cdots, C'_n\}$, then $C \subseteq C'$ if and only if for
every $i$, there is a $j$ such that $C_i \subseteq C'_j$.  This is a
simple consequence of the fact that each $C_i$ is irreducible.  We
will also require to be able to compute the intersection $C \cap C'$
of any two irreducible closed subsets of $X$ as a finite union
$C_1 \cup \cdots \cup C_n$ of irreducible closed subsets.

Those computability requirements are formalized by the notion of an
\emph{S-representation} \cite[Definition~5.1]{FGL:compl}.  An
S-representation of a Noetherian space $X$ is a 5-tuple
$(S, \SEval \_, \unlhd, \topcap, \meet)$ where:
\begin{enumerate} 
\item $S$ is a recursively enumerable set of so-called \emph{codes}
  (of irreducible closed subsets);
\item $\SEval \_$ is a surjective map from $S$ to $\Sober X$;
\item $\unlhd$ is a decidable relation such that, for all codes
  $a, b\in S$, $a \unlhd b$ iff $\SEval a \leq \SEval b$;
\item $\topcap$ is a finite subset of $S$, such that $X = \bigcup_{a
    \in \topcap} \SEval a$;
\item $\meet$ is a computable map (the \emph{intersection map}) from
  $S \times S$ to the collection $\Pfin (S)$ of finite subsets of $S$
  (and we write $a \meet b$ for $\meet (a,b)$) such that
  $\SEval a \cap \SEval b = \bigcup_{c \in a \meet b} \SEval c$.
\end{enumerate}
Let us take $X^*$, with the word topology, as an example.  We use
standard notations for certain regular languages on $X$: for every
$C \subseteq X$, $C^?$ denotes the set of words of at most one letter,
and that letter is in $C$; for every $F \subseteq X$, $F^*$ is the set
of words whose letters are all in $F$; for all $A, B \subseteq X^*$,
$AB$ denotes the set of all concatenations of one word from $A$ and
one from $B$; $\epsilon$ denotes both the empty word and the language
$\{\epsilon\}$.  A \emph{word product} is a language of the form
$P \eqdef A_1 A_2 \cdots A_N$, where each $A_i$ is an \emph{atom},
i.e., a language of the form $C^?$ with $C \in \Sober X$ or $F^*$
where $F$ is a closed subset of $X$.  When $X$ is Noetherian, the
irreducible closed subsets of $X^*$ are exactly the word products
\cite[Proposition~7.14]{FGL:compl}.  One can also decide inclusion of
word products in polynomial time with an oracle deciding inclusion in
$\Sober X$ \cite[Lemma~7.10, Corollary~7.11]{FGL:compl}, and compute
intersections of word products as finite unions of word products in
polynomial time with an oracle computing binary intersections in
$\Sober X$ as finite unions of irreducible closed subsets
\cite[Lemma~7.13]{FGL:compl}.  Formally:
\begin{proposition}[Theorem~7.15 of \cite{FGL:compl}]
  \label{prop:srepr:X*}
  Given an S-representation
  $(S, \SEval \_, \unlhd, \allowbreak \topcap, \meet)$ of a Noetherian
  space $X$, the following tuple
  $(S', \SEval \_', \unlhd', \topcap', \meet')$ is an S-representation
  of $X^*$:
  \begin{enumerate}
  \item $S'$ is the collection of all (syntactic) word products over
    the alphabet $S$, namely all regular expressions
    $A_1 A_2 \cdots A_N$ where each $A_i$ is either an expression of
    the form $a^?$ with $a \in S$, or $u^*$ where $u$ is a finite
    subset of $S$ (we write $\varepsilon$ when $N=0$).
  \item
    $\SEval {A_1 A_2 \cdots A_N}' \eqdef \SEval {A_1}' \SEval {A_2}'
    \cdots \SEval {A_N}'$, where we let
    $\SEval {a^?}' \eqdef \SEval a^?$ and
    $\SEval {\{a_1, \cdots, a_n\}^*}' \eqdef (\SEval {a_1} \cup \cdots
    \cup \Eval {a_n})^*$.
  \item $\unlhd'$ is defined inductively by:
    \begin{align*}
      \varepsilon \unlhd' Q & \text{ is always true} \\
      P \unlhd' \varepsilon & \text{ is false, if }P \neq \varepsilon \\
      a^? P \unlhd' {b}^? Q & \text{ iff }
                                \left\{
                                \begin{array}{ll}
                                  P \unlhd' Q & \text{if } a \unlhd b \\
                                  a^? P \unlhd' Q & \text{otherwise}
                                \end{array}
                                                     \right. \\
      a^? P \unlhd' {v}^* Q
                             & \text{ iff }
                               \left\{
                               \begin{array}{ll}
                                 P \unlhd' {v}^* Q
                                 & \text{if }\exists b \in v, a \unlhd b \\
                                 a^? P \unlhd' Q & \text{otherwise}
                               \end{array}
                                                    \right. \\
      u^* P \unlhd' {b}^? Q
                             & \text{ iff }
                               \left\{
                               \begin{array}{ll}
                                 P \unlhd' {b}^? Q& \text{if
                                                      }u=\emptyset \\
                                 u^* P \unlhd' Q & \text{otherwise}
                               \end{array}
                                                                         \right. \\
      u^* P \unlhd' {v}^* Q
                             & \text{ iff }
                               \left\{
                               \begin{array}{ll}
                                 P \unlhd' {v}^* Q
                                 & \text{if }\forall a \in u, \exists
                                   b \in v, a \unlhd b \\
                                 u^* P \unlhd' Q & \text{otherwise}
                               \end{array}
                                                                         \right.
    \end{align*}
  \item $\topcap'$ is $\{\topcap^*\}$.
  \item $\meet'$ is implemented by the following clauses (together
    with the obvious symmetric clauses):
    \begin{align}
      \label{eq:inter:eps}
      \varepsilon \meet' Q & \eqdef \{\varepsilon\} \\
      \label{eq:inter:??}
      a^? P \meet' {b}^? Q & \eqdef (a^? P \meet' Q) \cup (P \meet'
                               {b}^? Q) \\
      \nonumber
                            & \;\cup \{{c}^? R \mid c \in a \meet b, R \in
                              P \meet' Q\} \\
      \label{eq:inter:?*}
      a^? P \meet' {v}^* Q
                           & \eqdef
                             \left\{
                             \begin{array}{l}
                               \{{c}^? R
                               \mid c \in \bigcup_{b \in v} (a
                               \meet b),
                               R \in P \meet' {v}^* Q\}
                               \cup (a^? P \meet' Q)\\
                               \qquad\qquad
                               \qquad \text{if } a \meet b \neq
                               \emptyset \text{ for some }b \in v, \\
                               (P \meet' {v}^* Q) \cup (a^? P \meet' Q)
                               \qquad \text{otherwise}
                             \end{array}
      \right. \\
      \label{eq:inter:**}
      u^* P \meet' {v}^* Q
                            & \eqdef \{(\bigcup_{a \in u, b \in v} a \meet
                              b)^* R \mid R \in (P \meet' {v}^* Q) \cup (u^* P \meet' Q)\}.
    \end{align}
  \end{enumerate}
\end{proposition}

\begin{remark}
  \label{rem:meet'}
  One can optimize the procedures above in a number of ways.  In the
  definition of $\meet'$, one can remove any subsumed word product in
  the result.  A word product $P$ is \emph{subsumed} by another one,
  $Q$, in a given set, if and only if $P \unlhd' Q$, or equivalently
  $\SEval {P}' \subseteq \SEval {Q}'$.  As a special case, in
  (\ref{eq:inter:?*}), if $Q = \varepsilon$, then we can remove
  $a^? P \meet' Q$ ($=\{\varepsilon\}$), which is subsumed by some
  other word product, since $\SEval {\varepsilon}' = \{\epsilon\}$ is
  included in the denotation of the remaining word products (the union
  of the sets $\SEval {{c}^? R}'$ where
  $c \in \bigcup_{b \in v} (a \meet b)$ and
  $R \in P \meet' {v}^* Q$ if $a \meet b \neq \emptyset$ for some
  $b \in v$, the union of the sets $\SEval {R}'$ where
  $R \in P \meet' {v}^* Q$ otherwise).
\end{remark}

\section{Powersets}
\label{sec:powersets}

\subsection{Properties (A) and (D)}
\label{sec:pow:AD}

Let $\pow (X)$ denote the powerset of a space $X$, with the
\emph{lower Vietoris topology}, generated by subbasic open sets of the
form
$\Diamond U \eqdef \{A \in \pow (X) \mid A \cap U \neq \emptyset\}$.
By that, we mean that the open subsets of $\pow (X)$ are the unions of
finite intersections $\bigcap_{i=1}^n \Diamond U_i$.  (Note the
similarity of that notation with the open subsets $\diamond U$ of
$\Sober X$.  They are defined the same way, but the sets $\Diamond U$
only form a subbase of the lower Vietoris topology, whereas the sets
$\diamond U$ range over all the open sets in the hull-kernel topology
on $\Sober X$.  Also, $\Diamond$ and $\diamond$ commute with arbitrary
unions, but $\diamond$ additionally commutes with finite
intersections.)

The subset of $\pow (X)$ consisting of all closed subsets of $X$ is
called the \emph{Hoare powerspace} of $X$, and will be written as
$\Hoare (X)$.  We again write $\Diamond U$ for the open set
$\{C \in \Hoare (X) \mid C \cap U \neq \emptyset\}$.
Those sets generate the subspace topology on $\Hoare (X)$, and we will
also call it the lower Vietoris topology.  For any set $A$, $A$
intersects an open set $U$ if and only if $cl (A)$ intersects $U$, and
this implies that the function that maps every open subset of
$\pow (X)$ to its intersection with $\Hoare (X)$ is an
order-isomorphism.  The following lemma, which is of independent
interest, shows that $\Hoare (X)$ is homeomorphic to
$\Sober (\pow (X))$.  We also deal with $\pow^* (X)$, the subspace of
\emph{non-empty} subsets of $X$, and with $\Hoare^* (X)$, the subspace
of non-empty closed subsets of $X$.
\begin{lemma}[Lemma~5.10 of \cite{FGL:compl}]
  \label{lemma:H:sobr}
  The map $F \mapsto \dc F$ is a homeomorphism from $\Hoare (X)$ onto
  $\Sober (\pow (X))$, resp.\ from $\Hoare^* (X)$ onto
  $\Sober (\pow^* (X))$.  \qed
\end{lemma}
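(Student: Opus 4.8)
The plan is to show that $F \mapsto \dc F$ is a well-defined continuous bijection from $\Hoare(X)$ onto $\Sober(\pow(X))$ with a continuous inverse; the case of $\Hoare^*(X)$ and $\pow^*(X)$ is entirely analogous, so I would treat them in parallel, only pointing out the (trivial) places where non-emptiness intervenes. First I would verify that for a closed subset $F$ of $X$, the set $\dc F = \{A \in \pow(X) \mid A \subseteq F\}$ is irreducible closed in $\pow(X)$. It is closed because its complement is $\bigcup_{x \notin F} \Diamond(X \setminus \dc x)$ --- or more directly, $\pow(X) \setminus \dc F = \Diamond U$ where $U = X \setminus F$, since $A \not\subseteq F$ iff $A$ meets $U$. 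For irreducibility I would use the characterization via open sets: $\dc F$ is non-empty (it contains $F$, and in the starred case $F$ is non-empty so $F \in \dc F \cap \pow^*(X)$), and if two subbasic-or-basic opens $\bigcap_{i} \Diamond U_i$ and $\bigcap_j \Diamond V_j$ both meet $\dc F$, I must produce a single element of $\dc F$ meeting both; but $F$ itself works, since each $\Diamond U_i$ meeting $\dc F$ forces $F \cap U_i \neq \emptyset$ (as $cl(A) \subseteq F$ meets $U_i$ whenever $A$ does, for $A \subseteq F$), and likewise for the $V_j$, so $F$ lies in the intersection. Thus $F$ is in fact the largest element of $\dc F$, which is the conceptual heart of the argument.

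Next I would check that every irreducible closed subset of $\pow(X)$ arises this way. Given an irreducible closed $\mathcal C \subseteq \pow(X)$, set $F \eqdef cl\bigl(\bigcup_{A \in \mathcal C} A\bigr)$, the closure of the union of all members. I claim $\mathcal C = \dc F$. The inclusion $\mathcal C \subseteq \dc F$ is immediate since $A \subseteq F$ for every $A \in \mathcal C$. For the reverse inclusion, it suffices to show $F \in \mathcal C$ (then $\dc F \subseteq cl(\{F\}) \cap \pow(X)$, and one checks $cl(\{F\}) = \dc F$, so $\dc F \subseteq \mathcal C$ as $\mathcal C$ is closed). Suppose $F \notin \mathcal C$; since $\mathcal C$ is closed there is a basic open $\bigcap_{i=1}^n \Diamond U_i$ containing $F$ and missing $\mathcal C$. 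Then $F$ meets each $U_i$, so $\bigcup_{A \in \mathcal C} A$ meets each $U_i$, so for each $i$ there is $A_i \in \mathcal C$ with $A_i \cap U_i \neq \emptyset$, i.e. $\Diamond U_i$ meets $\mathcal C$. By irreducibility (iterated over the $n$ open sets $\Diamond U_i$), $\bigcap_{i=1}^n \Diamond U_i$ meets $\mathcal C$ --- contradiction. Hence $F \in \mathcal C$ and $\mathcal C = \dc F$. This also shows the map is injective, since $F$ is recovered from $\dc F$ as its largest element (equivalently, as $cl$ of the union).

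It remains to verify that $F \mapsto \dc F$ and its inverse are continuous, i.e. that it is a homeomorphism onto $\Sober(\pow(X))$ with the hull-kernel topology. The open subsets of $\Sober(\pow(X))$ are the sets $\diamond \mathcal U$ for $\mathcal U$ open in $\pow(X)$; since the $\Diamond U$ form a subbase it is enough to handle $\mathcal U = \Diamond U$. The preimage of $\diamond \Diamond U$ under our map is $\{F \in \Hoare(X) \mid \dc F \cap \Diamond U \neq \emptyset\}$; and $\dc F$ meets $\Diamond U$ iff some $A \subseteq F$ meets $U$ iff $F$ meets $U$ (one direction is trivial, the other takes $A = F$), i.e. the preimage is exactly the subbasic open $\Diamond U$ of $\Hoare(X)$. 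So the map and its inverse are continuous on a subbase, hence continuous. Combined with the bijectivity established above, this gives the homeomorphism. I do not anticipate a serious obstacle: the only point requiring a little care is keeping straight the two uses of the symbol $\Diamond$ (in $\pow(X)$ versus as applied inside $\Sober(\pow(X))$) and the repeated appeal to ``$A$ meets $U$ iff $cl(A)$ meets $U$'' to pass between arbitrary subsets and their closures --- the same fact that underlies the order-isomorphism between the open lattices of $\pow(X)$ and $\Hoare(X)$ noted just before the lemma.
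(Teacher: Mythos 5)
Your argument is correct and complete: the three steps (each $\dc F$ is closed with generic point $F$, hence irreducible; every irreducible closed $\mathcal C$ equals $\dc F$ for $F \eqdef cl(\bigcup_{A \in \mathcal C} A)$ via the ``closure meets an open iff the set does'' fact plus iterated irreducibility; and $\Diamond U \mapsto \diamond \Diamond U$ matches up the subbases) are exactly what is needed. The paper itself gives no proof here --- it delegates to Lemma~4.10 of \cite{FGL:compl} --- so there is nothing to compare against, but your route is the expected one. One small erratum: your first expression for the complement of $\dc F$, namely $\bigcup_{x \notin F} \Diamond (X \diff \dc x)$, is not correct (it can contain sets $A \subseteq F$, e.g.\ when $X$ is a two-point antichain); fortunately you immediately replace it by the correct identity $\pow(X) \diff \dc F = \Diamond(X \diff F)$, which is what the proof actually uses.
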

It follows that for every space $X$, $\pow (X)$ is Noetherian if and
only if $\Hoare (X)$ is Noetherian, and similarly for $\pow^* (X)$ and
$\Hoare^* (X)$.  It is easy to see that every subspace and every
homeomorph of a Noetherian space is Noetherian, so any of those
properties implies that $X$, which is homeomorphic to the subspace of
points $\{x\}$ in $\pow (X)$ (resp., $\pow^* (X)$), is Noetherian.
Conversely, if $X$ is Noetherian, then $\subseteq$ is well-founded on
$\Hoare (X)$.  Any finite intersection of principal ideals
$\dc_{\Hoare (X)} F_i$, $1\leq i\leq n$, in $\Hoare (X)$ can be
expressed as a finite union of principal ideals, in fact just as
$\dc_{\Hoare (X)} (F_1 \cap \cdots \cap F_n)$.  It follows that
$\Hoare (X)$ is Noetherian, and sober, with the upper topology of
inclusion.  Since the complement of
$\dc_{\Hoare (X)} \{F_1, \cdots, F_n\}$ is equal to
$\Diamond U_1 \cap \cdots \cap \Diamond U_n$, where each $U_i$ is the
complement of $F_i$ in $X$, that upper topology is none other than the
lower Vietoris topology.

The next proposition follows easily, and is a reformulation of
Theorem~5.11, (A)--(C), of \cite{FGL:compl}; that theorem actually
gives a full description of an S-representation for $\pow (X)$ and for
$\pow^* (X)$, while Theorem~5.8 of \cite{FGL:compl} gives the
analogous S-representation for $\Hoare (X)$ and for $\Hoare^* (X)$.
\begin{proposition}
  \label{prop:pow:Noeth}
  For every topological space $X$, $X$ is Noetherian if and only if
  $\pow (X)$ (resp., $\Hoare (X)$, $\pow^* (X)$, $\Hoare^* (X)$) is.

  Letting $Y \eqdef \pow (X)$ (resp., $\pow^* (X)$), the irreducible
  closed subsets of $Y$ are exactly the sets of the form
  $\dc_Y F = \{A \in Y \mid A \subseteq F\}$, where $F \in \Hoare (X)$
  (resp., $\Hoare^* (X)$).

  In particular, if $X$ is Noetherian, then the irreducible closed
  subsets of $Y$ can be represented as finite sets
  $\{C_1, \cdots, C_n\}$ (resp., with $n \geq 1$), denoting
  $\dc_Y (C_1 \cup \cdots \cup C_n)$, where each $C_i \in \Sober X$;
  if inclusion is decidable on $\Sober X$, then inclusion is decidable
  on $\Sober Y$: if $F$ is represented by the finite set
  $\{C_1, \cdots, C_m\}$ and $F'$ is represented by the finite set
  $\{C'_1, \cdots, C'_n\}$, then $F \subseteq F'$ if and only if for
  every $i$, there is a $j$ such that $C_i \subseteq C'_j$.  \qed
\end{proposition}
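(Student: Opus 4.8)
The plan is to assemble all three assertions from results already recalled: Lemma~\ref{lemma:H:sobr} (which gives $\Hoare (X)\cong\Sober(\pow (X))$ and $\Hoare^*(X)\cong\Sober(\pow^*(X))$), the fact that a space is Noetherian iff its sobrification is, the order-theoretic characterization of sober Noetherian spaces, the stability of Noetherianness under subspaces, and the description of the closed subsets of a Noetherian space as finite unions of irreducible closed subsets. This is essentially the reasoning in the paragraph preceding the statement, which I would organise as below.

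\emph{The equivalence.} By Lemma~\ref{lemma:H:sobr} together with ``Noetherian iff sobrification Noetherian'', it suffices to show that $X$ is Noetherian iff $\Hoare (X)$ is, and likewise for $\Hoare^*(X)$. For the backward direction, $x\mapsto cl(\{x\})$ is a topological embedding of $X$ into $\Hoare (X)$ (and into $\Hoare^*(X)$, since $cl(\{x\})$ is non-empty): the preimage of $\Diamond U$ is $\{x\mid cl(\{x\})\cap U\neq\emptyset\}=\{x\mid x\in U\}=U$. As subspaces of Noetherian spaces are Noetherian, $X$ is Noetherian whenever $\Hoare (X)$ or $\Hoare^*(X)$ is. For the forward direction, assume $X$ Noetherian. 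First, the lower Vietoris topology on $\Hoare (X)$ is the upper topology of $\subseteq$: the complement in $\Hoare (X)$ of the basic open $\bigcap_{i=1}^n\Diamond U_i$ is $\bigcup_{i=1}^n\dc(X\setminus U_i)$, a finite union of principal ideals, and conversely every finite union of principal ideals is such a complement; so both topologies have the finite unions of principal ideals as a base of closed sets. Second, $(\Hoare (X),\subseteq)$ meets the conditions characterising sober Noetherian spaces: $\subseteq$ is well-founded, since a strictly descending chain of closed subsets of $X$ would violate the descending chain condition on closed sets of the Noetherian space $X$; and $\dc F_1\cap\cdots\cap\dc F_n=\dc(F_1\cap\cdots\cap F_n)$ exhibits any finite intersection of principal ideals as a single one, $F_1\cap\cdots\cap F_n$ being closed. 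Hence $\Hoare (X)$ is sober and Noetherian, and so is its subspace $\Hoare^*(X)$.

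\emph{The irreducible closed subsets.} Lemma~\ref{lemma:H:sobr} makes $F\mapsto\dc F$ a bijection from $\Hoare (X)$ onto $\Sober(\pow (X))$, so the irreducible closed subsets of $\pow (X)$ are exactly the principal ideals $\dc F$ with $F$ a closed subset of $X$; and $\dc F=cl_{\pow (X)}(\{F\})=\{A\in\pow (X)\mid A\subseteq F\}$ because $F$ is closed. Likewise for $\pow^*(X)$ with $F$ non-empty closed.

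\emph{Representation and decidability.} Now assume $X$ Noetherian with inclusion decidable on $\Sober X$. Every closed subset $F$ of $X$ is a finite union $C_1\cup\cdots\cup C_n$ of irreducible closed subsets $C_i\in\Sober X$, with $n\geq 1$ when $F\neq\emptyset$; thus $\dc F=\dc(C_1\cup\cdots\cup C_n)$ is coded by $\{C_1,\dots,C_n\}$, the constraint $n\geq 1$ matching the non-empty case needed for $\pow^*(X)$. Since $\dc F\subseteq\dc F'$ iff $F\in\dc F'$ iff $F\subseteq F'$, deciding inclusion in $\Sober(\pow (X))$ reduces to deciding $F\subseteq F'$ for $F=C_1\cup\cdots\cup C_m$ and $F'=C'_1\cup\cdots\cup C'_n$; this holds iff each $C_i\subseteq F'$, and since $C_i$ is irreducible and the $C'_j$ closed, $C_i\subseteq C'_1\cup\cdots\cup C'_n$ iff $C_i\subseteq C'_j$ for some $j$, which is decidable by hypothesis. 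I do not expect a genuine obstacle here, the argument being a synthesis of the recalled facts; the only points needing a little care are the identification of the lower Vietoris and upper topologies on $\Hoare (X)$ and the short induction on $n$ lifting irreducibility from the two-set case to finite unions.
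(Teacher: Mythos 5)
Your proof is correct and follows essentially the same route as the paper, which assembles the same recalled facts (Lemma~\ref{lemma:H:sobr}, the order-theoretic characterization of sober Noetherian spaces via well-foundedness and $\dc F_1\cap\cdots\cap\dc F_n=\dc(F_1\cap\cdots\cap F_n)$, and the identification of the lower Vietoris topology with the upper topology of inclusion on $\Hoare(X)$) in the paragraph preceding the statement. One cosmetic point: for a non-$T_0$ space $X$ the map $x\mapsto cl(\{x\})$ need not be injective, so it is an initial map rather than a topological embedding --- the paper instead uses the genuine embedding $x\mapsto\{x\}$ into $\pow(X)$ (resp.\ $\pow^*(X)$); either way, Lemma~\ref{lemma:initial:noeth} yields the backward direction, so nothing is lost.
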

Those match properties (A) and (D) mentioned in the introduction, as
promised.

\subsection{Properties (B) and (C)}
\label{sec:pow:BC}

As for property (B), the specialization preordering on $\pow (X)$
(resp., $\pow^* (X)$) is given by $A \leq^\flat B$ if and only if
$cl (A) \subseteq cl (B)$.  When $X$ is a wqo, $cl (A) = \dc_X A$, so
$A \leq^\flat B$ if and only if for every $a \in A$, there is a
$b \in B$ such that $a \leq b$, and we retrieve the usual
\emph{domination} (a.k.a., \emph{Hoare}) quasi-ordering.

We now inquire about property (C).  One may wonder when $\pow (X)$ is
wqo, in the sense that its topology is both Alexandroff and
Noetherian.  One might think that this would be the case if and only
if $X$ is $\omega^2$-wqo (see \cite{Laver:2wqo,Marcone:bqo} or
\cite{Jancar:wqo:pow} for example).  This is wrong, as we will see in
Proposition~\ref{prop:pow:wqo} below.  If $X$ is $\omega^2$-wqo, what
we obtain is that the domination quasi-ordering on $\pow (X)$ is a
well-quasi-ordering (this can be taken as a definition of an
$\omega^2$-wqo), \emph{not} that the lower Vietoris topology is
Alexandroff.  We will use the following lemma.
\begin{lemma}
  \label{lemma:wqo:acc}
  \begin{enumerate}
  \item A topological space whose lattice of open subsets is
    well-founded under inclusion has the Alexandroff topology of its
    specialization preordering, and that quasi-ordering has the
    ascending chain condition.
  \item A well-quasi-ordering with the ascending chain condition is
    essentially finite.
  \item A Noetherian space whose lattice of open subsets is
    well-founded is essentially finite.
  \end{enumerate}
\end{lemma}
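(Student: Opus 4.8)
The plan is to establish (1), then the only substantive item, (2), and finally to read off (3) from (1) and (2) together with facts already recorded in Section~\ref{sec:preliminaries}.

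For (1), I would first check that the topology is Alexandroff. Given an arbitrary family ${(U_i)}_{i \in I}$ of open sets, consider the family $\mathcal F$ of all \emph{finite} intersections of members of the family; it is a nonempty collection of open sets, so well-foundedness of the lattice of opens gives it a minimal element $V = U_{i_1} \cap \cdots \cap U_{i_n}$. For every $j \in I$, $V \cap U_j$ lies in $\mathcal F$ and is included in $V$, hence equals $V$ by minimality, so $V \subseteq U_j$; therefore $V \subseteq \bigcap_{i} U_i$, and since the reverse inclusion is obvious, $\bigcap_i U_i = V$ is open. Hence the topology is the Alexandroff topology of its specialization quasi-ordering $\leq$. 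To get the ascending chain condition on $\leq$, given a strictly ascending sequence $x_0 < x_1 < \cdots$, the sets $\upc x_n$ are open (the topology being Alexandroff), and $\upc x_{n+1} \subseteq \upc x_n$ because $x_n \leq x_{n+1}$, the inclusion being strict since $x_n \in \upc x_n \setminus \upc x_{n+1}$ (else $x_{n+1} \leq x_n$). This strictly descending chain of open sets contradicts the hypothesis.

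For (2), I would pass to the $T_0$ quotient, a partial order that is still a well-quasi-ordering and still satisfies the ascending chain condition, and for which being essentially finite just means being finite (Section~\ref{sec:preliminaries}). So let $P$ be such a poset and suppose it is infinite. The recurring observation is that in a well-quasi-ordering the minimal elements of any subset form a finite antichain and every element of the subset lies above one of them (the subset being well-founded); hence every subset of $P$ is a union of finitely many principal filters $\upc m$. Applying this to $P$ itself, some $\upc m_0$ is infinite. If $\upc m_i$ is infinite, then $\upc m_i \setminus \{m_i\}$ is infinite and is a union of finitely many principal filters $\upc m'$ with $m' > m_i$, so one of these, $\upc m_{i+1}$, is infinite with $m_{i+1} > m_i$. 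Iterating yields a strictly ascending chain $m_0 < m_1 < m_2 < \cdots$, contradicting the ascending chain condition; so $P$ is finite.

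Item (3) then follows immediately: if $X$ is Noetherian and its lattice of opens is well-founded, then by (1) the topology of $X$ is Alexandroff, of a quasi-ordering $\leq$ with the ascending chain condition; an Alexandroff Noetherian space is a wqo, so $\leq$ is a well-quasi-ordering with the ascending chain condition, and (2) shows $X$ is essentially finite. The one delicate point is the iteration in (2): one must be careful that peeling off the minimal elements of $\upc m_i \setminus \{m_i\}$ really leaves only finitely many principal filters — this is exactly where the absence of infinite antichains is used — and that the new base point $m_{i+1}$ is \emph{strictly} above $m_i$, which is what makes the resulting chain strictly ascending. (Alternatively, (2) can be obtained from the fact that an infinite poset contains an infinite chain or an infinite antichain, since a well-quasi-ordering with the ascending chain condition admits neither; the argument above has the advantage of being self-contained.)
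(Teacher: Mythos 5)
Your proof is correct. Items (1) and (3) follow essentially the paper's route: in (1) the paper also extracts minimal elements from the well-founded lattice of opens and derives the ascending chain condition from the strictly descending chain $\upc x_0 \supsetneq \upc x_1 \supsetneq \cdots$; your only variation is that you prove Alexandroffness by showing arbitrary intersections of opens are open (via a minimal finite intersection), where the paper instead shows that the minimal open neighborhood $U_x$ of each point equals $\upc x$ --- two faces of the same use of well-foundedness. Item (2) is where you genuinely diverge. The paper argues by contradiction from an infinite family of pairwise $\equiv$-inequivalent elements: it extracts a countable sequence, invokes the standard fact that every infinite sequence in a wqo has an infinite (non-strictly) ascending subsequence, and then lets the ascending chain condition force two consecutive terms to be equivalent. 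You instead combine the two defining finiteness properties of a wqo (well-foundedness and no infinite antichains) into the finite-basis property --- every upwards-closed subset of the $T_0$ quotient is a finite union of principal filters --- and iterate a pigeonhole step to manufacture a strictly ascending chain $m_0 < m_1 < \cdots$ contradicting the ascending chain condition directly. The paper's argument is shorter and leans on the Ramsey-type subsequence extraction; yours trades that for a dependent-choice iteration and has the merit of exhibiting the contradiction as an explicit strictly ascending chain, which makes the role of the ascending chain condition more transparent. Your care about the two delicate points (that $\upc m_i \setminus \{m_i\}$ is still upwards-closed in the quotient poset, and that the new base points are strictly above the old ones) is exactly right.
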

\begin{proof}
  (1) Let us assume that the lattice of open subsets of $X$ is
  well-founded.  Given any $x \in X$, there is a minimal open
  neighborhood $U_x$ of $x$.  By definition of the specialization
  preordering $\leq$, for every point $y \in X$ such that
  $x \not\leq y$, there is an open subset $U$ of $X$ that contains $x$
  but not $y$.  Since $U \cap U_x$ is an open neighborhood of $x$, the
  minimality of $U_x$ entails that $U \cap U_x = U_x$, that is,
  $U_x \subseteq U$.  It follows that $y$ is not in $U_x$.  We have
  shown the implication $x \not\leq y \limp y \not\in U_x$, from which
  we deduce $U_x \subseteq \upc x$.  Every open set is upwards-closed
  in the specialization preordering, so $U_x = \upc x$.

  From this, we deduce that $\upc x$ is open for every $x \in X$.
  Every upwards-closed subset $A$ is equal to $\bigcup_{x \in A} \upc
  x$, hence is open.  Hence the topology of $X$ is the Alexandroff
  topology of $\leq$.

  We now consider any strictly increasing sequence
  $x_0 < x_1 < \cdots < x_n < \cdots$.  Then the sets $\upc x_n$ form
  a strictly descending sequence of open subsets, contradicting our
  well-foundedness assumption.  Hence $\leq$ has the ascending chain
  condition.

  (2) By contradiction, let us assume that there is an infinite set
  $A$ whose elements are pairwise inequivalent with respect to
  $\equiv \eqdef \leq \cap \geq$.  We extract a countable infinite
  subset ${(x_n)}_{n \in \nat}$ of $A$.  In a well-quasi-ordering,
  every infinite sequence has an infinite ascending subsequence, so we
  may assume without loss of generality that
  $x_0 \leq x_1 \leq \cdots \leq x_n \leq \cdots$.  By the ascending
  chain condition, only finitely many of those inequalities can be strict,
  hence $x_n \equiv x_{n+1}$ for at least one $n$.  That is absurd.

  (3) If $X$ is Noetherian with a well-founded lattice of open
  subsets, by (1) it is Alexandroff, hence wqo, and satisfies the
  ascending chain condition.  We conclude by (2).
\end{proof}

\begin{proposition}
  \label{prop:pow:wqo}
  Let $X$ be a Noetherian space.  The lower Vietoris topology on
  $\pow (X)$ (resp., $\Hoare (X)$, $\pow^* (X)$, $\Hoare^* (X)$) is
  Alexandroff if and only if $X$ is essentially finite.
\end{proposition}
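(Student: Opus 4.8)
The plan is to establish both implications, the ``if'' direction being immediate. If $X$ is essentially finite it has only finitely many open subsets, hence there are only finitely many subbasic open sets $\Diamond U$ and therefore only finitely many open subsets of $\pow(X)$ altogether (each being a union of finite intersections of subbasic ones); a space with finitely many open sets is trivially Alexandroff and essentially finite, and so is any of its subspaces, in particular $\Hoare(X)$, $\pow^*(X)$, $\Hoare^*(X)$.

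For the converse, assume one of the four spaces --- call it $\mathcal P$ --- carries the Alexandroff topology. The first step is to observe that $X$ itself is then Alexandroff. If $\mathcal P$ is $\pow(X)$ or $\pow^*(X)$, the subspace $\{\{x\}\mid x\in X\}$ of $\mathcal P$ is homeomorphic to $X$, since the trace of $\Diamond U$ on it is $\{\{x\}\mid x\in U\}$; if $\mathcal P$ is $\Hoare(X)$ or $\Hoare^*(X)$, the subspace $\{\dc x\mid x\in X\}$ is homeomorphic to the $T_0$ quotient $X/{\equiv}$. Since subspaces of Alexandroff spaces are Alexandroff, and since $X$ is Alexandroff exactly when $X/{\equiv}$ is, $X$ is Alexandroff in every case; being also Noetherian by hypothesis, $X$ is a wqo.

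Now suppose, towards a contradiction, that $X$ is not essentially finite, i.e.\ that its specialization ordering $\leq$ has infinitely many $\equiv$-classes. Picking infinitely many pairwise $\equiv$-inequivalent elements and using that every infinite sequence in a wqo has an infinite ascending subsequence, I obtain points $x_0,x_1,x_2,\dots$ of $X$ with $x_m\leq x_n$ and $x_n\not\leq x_m$ whenever $m<n$. Put $A\eqdef\{x_n\mid n\in\nat\}$, a non-empty set with $cl(A)=\dc A$ since $X$ is Alexandroff. Let $\mathcal U$ be the set of all $B\in\mathcal P$ with $A\leq^\flat B$ when $\mathcal P$ is $\pow(X)$ or $\pow^*(X)$, and the set of all closed $C\in\mathcal P$ with $\dc A\subseteq C$ when $\mathcal P$ is $\Hoare(X)$ or $\Hoare^*(X)$; in either case $\mathcal U$ is upward closed in the specialization ordering of $\mathcal P$, hence open, $\mathcal P$ being Alexandroff. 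As $A$ (resp.\ $\dc A$) lies in $\mathcal U$, some basic open neighbourhood $\bigl(\bigcap_{i=1}^{m}\Diamond U_i\bigr)\cap\mathcal P$ of it is contained in $\mathcal U$, with each $U_i$ open in $X$ and meeting $A$ (resp.\ $\dc A$). Choose for each $i$ an element of $U_i\cap A$ --- in the Hoare cases an element of $U_i\cap\dc A$, which lies below some $x_{n_i}$ --- and let $N$ be the largest index occurring; since every $U_i$, being open, is upward closed, $x_N\in U_i$ for all $i$. Hence $\{x_N\}$ (resp.\ $\dc x_N$) belongs to that basic open neighbourhood, hence to $\mathcal U$, giving $A\leq^\flat\{x_N\}$ (resp.\ $\dc A\subseteq\dc x_N$), i.e.\ $x_n\leq x_N$ for all $n$ --- contradicting $x_{N+1}\not\leq x_N$. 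Thus $X$ must be essentially finite.

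The step I expect to be the crux is the first reduction (Alexandroff-ness of the powerspace forces it on $X$), together with the right choice of witness $A$: a strictly ascending chain, which is precisely why the tempting guess involving $\omega^2$-wqos is wrong. After that everything is routine, and in particular the small-neighbourhood argument in the last step goes through uniformly for all four spaces because open subsets of $X$ are automatically upward closed, so it needs no Alexandroff hypothesis on $X$; the degenerate case $X=\emptyset$ is essentially finite and may be dismissed at the outset.
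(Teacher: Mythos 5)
Your proof is correct, and it takes a genuinely different route from the paper's. The paper never establishes that $X$ itself is Alexandroff: it argues directly with closed subsets of $X$, taking a strictly ascending chain $F_0 \subsetneq F_1 \subsetneq \cdots$ of closed sets, forming $\mathcal F \eqdef \bigcup_n \{A \mid A \subseteq F_n\}$ in the powerspace (downwards closed, hence closed by the Alexandroff hypothesis), and showing that the single element $F_\infty \eqdef cl\,(\bigcup_n F_n)$ must lie in $\mathcal F$, hence in some $\{A \mid A \subseteq F_n\}$ --- a contradiction; Lemma~\ref{lemma:wqo:acc}~(3) then gives essential finiteness. You instead first transfer Alexandroffness down to $X$ via the embedding $x \mapsto \{x\}$ (resp.\ $x \mapsto \dc x$), conclude that $X$ is a wqo, extract a strictly ascending chain of \emph{points} (this preliminary reduction is genuinely needed in your setup, since the specialization order of a Noetherian space need not be a wqo), and then run the dual argument on the open up-set $\mathcal U$ of $A = \{x_n \mid n \in \nat\}$, where a finite basic neighbourhood forces a single $x_N$ to dominate the whole chain. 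The paper's version is shorter because it skips the reduction and works at the level of closed sets; yours buys a more concrete contradiction and makes explicit the useful fact, left implicit in the paper, that Alexandroffness of the powerspace is inherited by $X$. Both proofs ultimately rest on the same mechanism: Alexandroffness turns an infinite monotone union into an open (or closed) set, and a finite basic neighbourhood of the ``limit'' element then pins everything down to a finite stage. Your handling of all four variants and of the degenerate cases is sound.
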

\begin{proof}
  The if direction is clear.  Let $Y$ be $\pow (X)$ (resp.,
  $\Hoare (X)$, $\pow^* (X)$, $\Hoare^* (X)$), and let us assume that
  $Y$ is Alexandroff.  We use Lemma~\ref{lemma:wqo:acc}~(3), first
  showing that the lattice of open subsets of $X$ is well-founded, or
  equivalently that there is no infinite strictly ascending sequence
  of closed subsets of $X$.

  For the sake of contradiction, we assume that there is such an
  infinite strictly ascending sequence
  $F_0 \subsetneq F_1 \subsetneq \cdots \subsetneq F_n \subsetneq
  \cdots$.  Up to the removal of $F_0$, we may assume that every $F_n$
  is non-empty: this is needed for the cases where $Y$ is $\pow^* (X)$
  or $\Hoare^* (X)$.  For each $n \in \nat$,
  $\dc_Y F_n = Y \diff \Diamond (X \diff F_n)$ is closed.  Since $Y$
  is Alexandroff, any union of closed subsets of $Y$ is closed, so
  $\mathcal F \eqdef \bigcup_{n \in \nat} \dc_Y F_n$ is closed.

  Let $F_\infty$ be the closure of $\bigcup_{n \in \nat} F_n$ in $X$.
  We claim that $F_\infty$ is in $\mathcal F$.  Otherwise, by the
  definition of the lower Vietoris topology, $F_\infty$ would be in
  some finite intersection $\bigcap_{i=1}^m \Diamond U_i$, disjoint
  from $\mathcal F$, where each $U_i$ is open in $X$.  For each $i$,
  $U_i$ would then intersect $F_\infty$, hence
  $\bigcup_{n \in \nat} F_n$, hence $F_{n_i}$ for some $n_i \in \nat$.
  Let $n \in \nat$ be larger than every $n_i$, $1\leq i\leq m$.  Then
  $F_n$ intersects every $U_i$, $1\leq i\leq m$, as well, so
  $F_n \in \bigcap_{i=1}^m \Diamond U_i$.  However, $F_n$ is in
  $\mathcal F$, by definition of $\mathcal F$, which is impossible
  since $\mathcal F$ is disjoint from $\bigcap_{i=1}^m \Diamond U_i$
  by assumption.

  Since $F_\infty$ is in $\mathcal F$, it is in some $\dc_Y F_n$.  In
  particular, $F_{n+1} \subseteq F_\infty \subseteq F_n$, which is
  impossible.  Hence there is no strictly ascending sequence of closed
  subsets of $X$, and we conclude by Lemma~\ref{lemma:wqo:acc}~(3).
\end{proof}

\section{Initial maps}
\label{sec:initial-maps}

We will use the following additional facts about Noetherian spaces.
An \emph{initial} map $f \colon Y \to Z$ between topological spaces is
one such that the open subsets of $Y$ are exactly the sets of the form
$f^{-1} (W)$, $W$ open in $Z$.  All initial maps are continuous.

\begin{remark}
  \label{rem:initial}
  Given a subbase of the topology of $Y$, a practical way of checking
  that $f \colon Y \to Z$ is initial consists in verifying that $f$ is
  continuous, and that every subbasic open subset $V$ of $Y$ can be
  written as $f^{-1} (W)$ for some open subset $W$ of $Z$.  In other
  words, we do not need to check the latter for every open subset of
  $Y$, just for subbasic open sets.  Indeed, every open subset $V$ of
  $Y$ can be written as $\bigcup_{i \in I} \bigcap_{j=1}^{n_i} V_{ij}$
  where each $V_{ij}$ is subbasic, and if we can write each $V_{ij}$
  as $f^{-1} (W_{ij})$ with $W_{ij}$ open in $Z$, then $V$ is equal to
  $f^{-1} (\bigcup_{i \in I} \bigcap_{j=1}^{n_i} W_{ij})$.
\end{remark}

A general way of finding initial maps is as follows.  Let $Z$ be a
topological space and $f$ be a map from a set $Y$ to $Z$.  With the
coarsest topology on $Y$ that makes $f$ continuous, $f$ is initial.
This is notably the case of topological embeddings, which are those
initial maps that are injective.

\begin{lemma}
  \label{lemma:initial:noeth}
  Let $f \colon Y \to Z$ be an initial map between topological
  spaces.  If $Z$ is Noetherian, then $Y$ is Noetherian.
\end{lemma}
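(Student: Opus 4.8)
The plan is to exploit the characterization of Noetherianness in terms of the ascending chain condition on the lattice of open sets, which the excerpt has already established depends only on that lattice and not on the points. First I would recall that an initial map $f \colon Y \to Z$ yields, by definition, a surjection from the open sets of $Z$ onto the open sets of $Y$, namely $W \mapsto f^{-1}(W)$. This surjection is monotone with respect to inclusion, since $W_1 \subseteq W_2$ implies $f^{-1}(W_1) \subseteq f^{-1}(W_2)$. So the lattice of open subsets of $Y$ is an order-theoretic quotient (a monotone surjective image) of the lattice of open subsets of $Z$.

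The key step is then the elementary fact that the ascending chain condition is preserved under monotone surjective images of posets. Concretely, suppose $V_0 \subseteq V_1 \subseteq \cdots$ is an ascending chain of open subsets of $Y$. Using surjectivity, pick open subsets $W_n$ of $Z$ with $f^{-1}(W_n) = V_n$, and replace $W_n$ by $W'_n \eqdef W_0 \cup W_1 \cup \cdots \cup W_n$; this gives an ascending chain in the open sets of $Z$ with $f^{-1}(W'_n) = V_0 \cup \cdots \cup V_n = V_n$ (using that $f^{-1}$ commutes with unions, and that the $V_n$ are already ascending). Since $Z$ is Noetherian, its lattice of open sets has the ascending chain condition, so $W'_n = W'_{n_0}$ for all $n \geq n_0$; applying $f^{-1}$ gives $V_n = V_{n_0}$ for all $n \geq n_0$. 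Hence $Y$ satisfies the ascending chain condition on open sets, i.e.\ $Y$ is Noetherian.

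There is really no hard part here; the only point requiring a moment's care is to make the chosen preimages $W_n$ themselves ascending (the raw choices from surjectivity need not be), which is handled by taking finite unions as above. One could alternatively phrase the whole argument dually in terms of descending chains of closed sets, or observe directly that $Y$ is homeomorphic to a subspace of $Z$ when $f$ is additionally injective and then invoke closure of Noetherianness under subspaces — but the surjection-of-open-lattices argument is cleaner and covers the general (not necessarily injective) case in one stroke. I would also remark that this lemma subsumes, and slightly generalizes, the already-quoted facts that subspaces and continuous images with coarsened-enough topologies of Noetherian spaces are Noetherian, since those are special cases of initial maps.
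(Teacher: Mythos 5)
Your argument is correct and is essentially the paper's own proof: both write the opens of $Y$ as $f^{-1}(W_n)$, replace $W_n$ by $W_0 \cup \cdots \cup W_n$ to make the chain in $Z$ ascending, invoke the ascending chain condition in $Z$, and pull back with $f^{-1}$. The extra care you take to verify $f^{-1}(W_0 \cup \cdots \cup W_n) = V_n$ is exactly the point the paper glosses over with ``we may assume''.
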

\begin{proof}
  The open subsets of $Y$ are the sets $f^{-1} (W)$, $W$ open in $Z$.
  Let ${(f^{-1} (W_n))}_{n \in \nat}$ be a monotonic sequence of open
  subsets in $Y$.  Replacing $W_n$ by
  $W_0 \cup W_1 \cup \cdots \cup W_n$, we may assume that
  ${(W_n)}_{n \in \nat}$ is also a monotonic sequence.  Since $Z$ is
  Noetherian, all sets $W_n$ are equal for $n$ large enough.  Hence
  all sets $f^{-1} (W_n)$ are equal for $n$ large enough.
\end{proof}

\begin{lemma}
  \label{lemma:irred:f-1}
  Let $f \colon Y \to Z$ be an initial map.  The irreducible closed
  subsets $D$ of $Y$ are all of the form $f^{-1} (C)$ where $C$ is
  some irreducible closed in $Z$.  More precisely, one can always
  choose $C \eqdef cl (f [D])$.
\end{lemma}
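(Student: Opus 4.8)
The plan is to prove the two assertions in turn, the second being a refinement of the first, and to exploit the fact that $f$ initial means the open sets of $Y$ are precisely the preimages $f^{-1}(W)$ of open sets $W$ of $Z$, so dually the closed sets of $Y$ are precisely the preimages $f^{-1}(B)$ of closed sets $B$ of $Z$. I would begin by fixing an irreducible closed subset $D$ of $Y$ and setting $C \eqdef cl(f[D])$; since $f$ is continuous, $f[D]$ is the continuous image of an irreducible set, hence irreducible, and its closure $C$ is therefore irreducible closed in $Z$ (this is recorded in the preliminaries on the functoriality of $\Sober$). So it remains only to show $D = f^{-1}(C)$.

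For the inclusion $D \subseteq f^{-1}(C)$: every $x \in D$ has $f(x) \in f[D] \subseteq cl(f[D]) = C$, so $x \in f^{-1}(C)$; this is immediate and uses nothing about irreducibility. The reverse inclusion $f^{-1}(C) \subseteq D$ is the substantive point. Since $D$ is closed in $Y$ and $f$ is initial, we may write $D = f^{-1}(B)$ for some closed subset $B$ of $Z$. Then $f[D] \subseteq B$, hence $C = cl(f[D]) \subseteq B$ because $B$ is closed, and therefore $f^{-1}(C) \subseteq f^{-1}(B) = D$. This gives $f^{-1}(C) = D$, establishing both the existence claim (take this $C$) and the ``more precisely'' clause (one may always choose $C = cl(f[D])$).

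The main obstacle — such as it is — is the reverse inclusion, and the key realization that unlocks it is that initiality lets us represent the given closed set $D$ as $f^{-1}(B)$ for \emph{some} closed $B$, without needing to know $B$ explicitly; once we have any such $B$, the closure $cl(f[D])$ is forced to sit inside it, and pulling back preserves the inclusion. I would stress in the write-up that irreducibility of $D$ is used only to guarantee that the canonical choice $C = cl(f[D])$ is itself irreducible (via continuity of $f$), so that $C$ genuinely lies in $\Sober Z$; the equation $D = f^{-1}(C)$ itself holds for any closed $D$ and this particular $C$. Conversely, and although not strictly required by the statement, it is worth noting for later use that $f^{-1}(C')$ is closed for every closed $C'$ of $Z$ and irreducible whenever $C'$ is, by the same image-and-preimage bookkeeping, so the correspondence $C' \mapsto f^{-1}(C')$ maps $\Sober Z$ onto $\Sober Y$.
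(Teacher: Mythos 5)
Your proof of the lemma itself is correct and follows essentially the same route as the paper: write $D = f^{-1}(B)$ using initiality, deduce $cl(f[D]) \subseteq B$ and hence $f^{-1}(cl(f[D])) \subseteq D$, and get the reverse inclusion pointwise. One warning about your closing aside: the claim that $f^{-1}(C')$ is irreducible for every irreducible closed $C'$ of $Z$, so that $C' \mapsto f^{-1}(C')$ maps $\Sober Z$ onto $\Sober Y$, is false in general --- $f^{-1}(C')$ may be empty, and even when non-empty it need not be irreducible (take $Y$ a two-point discrete subspace of a space $Z$ with a generic point; then $f^{-1}(Z) = Y$ is not irreducible). The paper explicitly cautions against exactly this converse immediately after the lemma, and only recovers it under extra hypotheses (image of $f$ open or closed) in Lemma~\ref{lemma:irred:f-1:clop}; you should drop that sentence or add the corresponding hypotheses.
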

\begin{proof}
  Let $D$ be irreducible closed in $Y$, and consider $C \eqdef cl (f
  [D])$.  Recall that $C = \Sober (f) (D)$ is irreducible closed.

  Since $f$ is initial, $D = f^{-1} (C')$ for some closed subset $C'$
  of $Z$.  Then $f [f^{-1} (C')]$ is included in $C'$, so its closure
  $C \eqdef cl (f [D])$ is also included in $C'$.  In particular,
  $f^{-1} (C)$ is included in $f^{-1} (C') = D$.  Conversely, for
  every $y \in D$, $f (y)$ is in $f [D]$ hence in $C$, so $D$ is
  included in $f^{-1} (C)$.  Therefore $D = f^{-1} (C)$, where $C$ is
  irreducible closed.
\end{proof}
Note that Lemma~\ref{lemma:irred:f-1} does not say that every set
$f^{-1} (C)$, $C \in \Sober Z$, is irreducible closed in $Y$, just
that every irreducible closed subset of $Y$ must be of that form.  We
have a complete characterization when the image of $f$ is open or
closed:
\begin{lemma}
  \label{lemma:irred:f-1:clop}
  Let $f \colon Y \to Z$ be an initial map.
  \begin{enumerate}
  \item If the image of $f$ is open, then the irreducible closed
    subsets of $Y$ are exactly those sets of the form $f^{-1} (C)$
    where $C$ ranges over the irreducible closed subsets of $Z$ that
    intersect the image of $f$.
  \item If the image of $f$ is closed, then the irreducible closed
    subsets of $Y$ are exactly those sets of the form $f^{-1} (C)$
    where $C$ ranges over the irreducible closed subsets of $Z$ that
    are included in the image of $f$.
  \end{enumerate}
\end{lemma}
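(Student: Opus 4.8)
The plan is to treat the two cases by reducing to Lemma~\ref{lemma:irred:f-1} together with a pair of easy observations: first, that every irreducible closed subset $D$ of $Y$ has the form $f^{-1}(C)$ for some $C\in\Sober Z$ \emph{that meets} (resp.\ is contained in) the image of $f$; and second, that conversely, when the image is open (resp.\ closed), every such $f^{-1}(C)$ is genuinely irreducible closed in $Y$. For the forward direction in both cases, I would start from Lemma~\ref{lemma:irred:f-1}: $D = f^{-1}(C)$ with $C = cl(f[D])$ irreducible closed. Since $D$ is non-empty, $f[D]$ is non-empty, so $C$ meets the image of $f$; that settles the extra condition in case~(1). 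For case~(2), when the image of $f$ is closed, $cl(f[D])\subseteq cl(\text{image})=\text{image}$, so $C$ is in fact contained in the image; that is the extra condition in case~(2).

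For the converse in case~(1), suppose $C\in\Sober Z$ meets the image of $f$; I must show $D\eqdef f^{-1}(C)$ is irreducible closed in $Y$. Closedness is immediate by continuity. Non-emptiness follows because $C$ meets the image. For irreducibility I would use the open-set formulation: given open subsets $f^{-1}(W_1), f^{-1}(W_2)$ of $Y$ (every open set of $Y$ is of this shape) each meeting $D$, I want their intersection to meet $D$. Saying $f^{-1}(W_i)$ meets $f^{-1}(C)$ is exactly saying $W_i\cap C$ meets the image of $f$; in particular $W_i$ meets $C$, so by irreducibility of $C$, $W_1\cap W_2$ meets $C$. Here is where openness of the image enters: $C\cap(\text{image})$ is a non-empty open subset of the subspace $C$, and $C$ is irreducible, hence $C\cap(\text{image})$ is dense in $C$ and itself irreducible (an open dense subspace of an irreducible space is irreducible); so from $W_1, W_2$ each meeting the irreducible set $C\cap(\text{image})$ — which they do, since $W_i\cap C\cap(\text{image})\neq\emptyset$ — we get $W_1\cap W_2\cap C\cap(\text{image})\neq\emptyset$, i.e.\ $f^{-1}(W_1\cap W_2\cap C)\neq\emptyset$, i.e.\ $f^{-1}(W_1)\cap f^{-1}(W_2)$ meets $D$. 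For the converse in case~(2), suppose $C\in\Sober Z$ is contained in the image of $f$. Again $D=f^{-1}(C)$ is closed and non-empty (non-empty because $C$ is non-empty and inside the image). For irreducibility, with $f^{-1}(W_1), f^{-1}(W_2)$ each meeting $D$: $W_i$ meets $C$ (witnessed by a point of $C$, which lies in the image), so $W_1\cap W_2$ meets $C$ by irreducibility of $C$; any witness lies in $C\subseteq\text{image}$, hence has a preimage under $f$, so $f^{-1}(W_1)\cap f^{-1}(W_2)=f^{-1}(W_1\cap W_2)$ meets $f^{-1}(C)=D$.

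The main obstacle is the case~(1) converse, and specifically the step that an open dense subspace of an irreducible space is irreducible — or, phrased without that lemma, that if $W_1, W_2$ are open in $Z$, $C$ is irreducible, the image $V$ of $f$ is open, and each $W_i\cap C\cap V$ is non-empty, then $W_1\cap W_2\cap C\cap V$ is non-empty. This follows by applying irreducibility of $C$ twice: $W_1\cap C\neq\emptyset$ and $V\cap C\neq\emptyset$ give $(W_1\cap V)\cap C\neq\emptyset$; similarly $(W_1\cap V\cap W_2)\cap C=(W_1\cap V)\cap(W_2)\cap C$, and since $W_2\cap C\neq\emptyset$ and $W_1\cap V\cap C\neq\emptyset$ are two open sets meeting the irreducible $C$, their intersection meets $C$. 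So the only real content is bookkeeping with the definition of irreducibility; I expect the whole proof to be short. A cosmetic point worth getting right is that in both cases the ``exactly those'' claim requires \emph{both} inclusions of the two families, which is why the forward direction must verify the side condition (meets / is contained in the image), not merely produce \emph{some} $C$.
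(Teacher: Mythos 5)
Your proof is correct and follows essentially the same route as the paper's: the forward direction in both cases via Lemma~\ref{lemma:irred:f-1} with $C = cl(f[D])$, and the converse by a direct check of irreducibility using initiality (every open/closed subset of $Y$ is a preimage), with the key step in case~(1) being a single application of irreducibility of $C$ to two open sets, one of which has been intersected with the open image. The only cosmetic deviations are that you use the open-set formulation of irreducibility in case~(2) where the paper uses the closed-set one, and that your detour through ``an open dense subspace of an irreducible space is irreducible'' is unnecessary since you immediately give the direct two-open-sets argument anyway.
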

\begin{proof}
  1. Let us assume that the image $\Img f$ of $f$ is open.  By
  Lemma~\ref{lemma:irred:f-1}, every irreducible closed subset of $Y$
  must be of the form $f^{-1} (C)$, with $C$ irreducible closed in
  $Z$.  Necessarily, $f^{-1} (C)$ must be non-empty, and that implies
  that $C$ intersects the image $\Img f$ of $f$.
  
  In the converse direction, let $C$ be irreducible closed in $Z$, and
  let us assume that $C$ intersects $\Img f$.  Then $f^{-1} (C)$ is
  non-empty.  Let us now consider two open subsets of $Y$ that
  intersect $f^{-1} (C)$.  Since $f$ is initial, they must be of the
  form $f^{-1} (U)$ and $f^{-1} (V)$, where $U$ and $V$ are open in
  $Z$.  Since $f^{-1} (C)$ intersects $f^{-1} (U)$, there is a point
  $y \in Y$ such that $f (y)$ is in $C$ and in $U$, and therefore
  $\Img f \cap U \cap C$ is non-empty.  In other words, $C$ intersects
  $\Img f \cap U$.  Similarly, $C$ also intersects $\Img f \cap V$.
  Both $\Img f \cap U$ and $\Img f \cap V$ are open in $Z$.  Since $C$
  is irreducible, it must intersect their intersection, which is
  $\Img f \cap U \cap V$.  Hence there is a point $f (y)$ (with
  $y \in Y$) in $\Img f$ which is also in $C$, $U$, and $V$.  Then $y$
  is in $f^{-1} (C)$, $f^{-1} (U)$ and $f^{-1} (V)$, so $f^{-1} (C)$
  intersects $f^{-1} (U) \cap f^{-1} (V)$.  We conclude that
  $f^{-1} (C)$ is irreducible.

  2. We now assume that $\Img f$ is closed.  By
  Lemma~\ref{lemma:irred:f-1}, every irreducible closed subset $D$ of
  $Y$ must be of the form $f^{-1} (C)$, where $C \eqdef cl (f [D])$ is
  irreducible closed in $Z$.  Since $f [D] \subseteq \Img f$ and
  $\Img f$ is closed, $cl (f [D])$ is also included in $\Img f$, so
  $C$ is included in the image of $f$.

  In the converse direction, let $C$ be irreducible closed in $Z$, and
  let us assume that $C \subseteq \Img f$.  Since $C$ is non-empty,
  $f^{-1} (C)$ is non-empty.  Let us now consider two closed subsets
  of $Y$ whose union contains $f^{-1} (C)$.  Since $f$ is initial,
  they must be of the form $f^{-1} (C_1)$ and $f^{-1} (C_1)$, where
  $C_1$ and $C_2$ are closed in $Z$.  For every $z \in C$, we can
  write $z$ as $f (y)$ for some $y \in Y$ since $C \subseteq \Img f$.
  Then $y$ is in $f^{-1} (C)$, hence in $f^{-1} (C_1)$ or in
  $f^{-1} (C_2)$.  It follows that $z$ is in $C_1$ or in $C_2$.  This
  shows that $C$ is included in $C_1 \cup C_2$, hence in $C_1$ or in
  $C_2$, using irreducibility.  In the first case, $f^{-1} (C)$ is
  included in $f^{-1} (C_1)$, otherwise in $f^{-1} (C_2)$.  We
  conclude that $f^{-1} (C)$ is irreducible.
\end{proof}


\section{Infinite words}
\label{sec:infinite-words}

Let $X$ be an alphabet, by which we simply mean a topological space,
not necessarily finite.  An \emph{infinite word} on $X$ is an infinite
sequence of elements of $X$, i.e., a function from $\nat$ to $X$.  We
let $X^\omega$ denote the set of all infinite words on $X$.  We write
every $w \in X^\omega$ as $w_0 w_1 \cdots w_n \cdots$, where
$w_n \in X$.  We also write $w_{<n}$ for the length $n$ prefix
$w_0 w_1 \cdots w_{n-1}$ of $w$, and $w_{\geq n}$ for the remainder
$w_n w_{n+1} \cdots$.

We will also consider the set of \emph{finite-or-infinite words}
$X^{\leq \omega} \eqdef X^\omega \cup X^*$.  Those can be defined as
the functions $w$ from an initial segment $\dom w$ of $\nat$ to $X$.

There is a standard quasi-ordering $\leq^\omega$ on $X^{\leq \omega}$,
defined by $w \leq^\omega w'$ if and only if $w$ is a \emph{subword}
of $w'$, namely if there is a monotonic, injective map
$f \colon \dom w \to \dom w'$ such that $w_n \leq w'_{f (n)}$ for
every $n \in \dom w$.  (As usual, $\leq$ is the specialization
preordering of $X$.)

The topology we will be interested in is the following.
We reuse the notation $\langle U_1; U_2; \cdots; U_n \rangle$ to
denote the set of finite \emph{or infinite} words that have a (finite)
subword in $U_1 U_2 \cdots U_n$.  The context should make clear
whether we reason in $X^*$ or in $X^{\leq \omega}$.  The notation
$\langle U_1; U_2; \cdots; U_n; \infy U\rangle$ denotes the set of
(necessarily infinite) words that can be written as a concatenation
$uw$ where $u$ is a finite word in
$\langle U_1; U_2; \cdots; U_n \rangle$ and $w$ contains infinitely
many letters from $U$.
\begin{definition}
  \label{defn:word:limsup}
  The \emph{asymptotic subword topology} on $X^{\leq \omega}$ is
  generated by the subbasic open sets
  $\langle U_1; U_2; \cdots; U_n \rangle$ and
  $\langle U_1; U_2; \cdots; U_n; \infy U \rangle$, where
  $n \in \nat$, and $U_1$, \ldots, $U_n$, $U$ are open in $X$.
\end{definition}
Note that $X^\omega$ is an open subset of $X^{\leq\omega}$, since
$X^\omega = \langle \infy X \rangle$.  It follows that $X^*$ occurs as
a closed subset of $X^{\leq\omega}$.

We equip each subspace of $X^{\leq\omega}$ with the subspace topology.
In particular, we will call \emph{asymptotic subword topology} on
$X^\omega$ the subspace topology.  The subspace topology on $X^*$
happens to coincide with the word topology.

\begin{fact}
  \label{fact:word:limpsup:dc}
  Every open (resp., closed) subset of $X^{\leq \omega}$ in the
  asymptotic subword topology is upwards-closed (resp.,
  downwards-closed) with respect to $\leq^\omega$.  \qed
\end{fact}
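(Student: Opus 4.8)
The plan is to show the two halves of the statement—that open sets are upwards-closed and closed sets are downwards-closed with respect to $\leq^\omega$—noting that the second follows from the first by taking complements, since a set is closed exactly when its complement is open, and the complement of an upwards-closed set is downwards-closed. So it suffices to prove that every open subset of $X^\omega$ in the asymptotic subword topology is upwards-closed under $\leq^\omega$.

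Since the subbasic open sets generate the topology by taking arbitrary unions of finite intersections, and since upwards-closedness (with respect to a fixed quasi-ordering) is preserved under arbitrary unions and arbitrary intersections, it is enough to check that each subbasic open set is upwards-closed. There are two families: $\langle U_1; U_2; \cdots; U_n \rangle$ and $\langle U_1; U_2; \cdots; U_n; \infy U \rangle$. For the first, suppose $w \in \langle U_1; \cdots; U_n \rangle$ and $w \leq^\omega w'$, witnessed by a monotonic injection $f \colon \dom w \to \dom w'$ with $w_k \leq w'_{f(k)}$ for all $k$. By definition $w$ has a finite subword in $U_1 U_2 \cdots U_n$, i.e.\ there are indices $i_1 < i_2 < \cdots < i_n$ in $\dom w$ with $w_{i_j} \in U_j$. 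Then $f(i_1) < f(i_2) < \cdots < f(i_n)$ in $\dom w'$, and since $U_j$ is open hence upwards-closed in the specialization quasi-ordering of $X$, and $w_{i_j} \leq w'_{f(i_j)}$, we get $w'_{f(i_j)} \in U_j$. Hence $w'$ also has a finite subword in $U_1 U_2 \cdots U_n$, so $w' \in \langle U_1; \cdots; U_n \rangle$.

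For the second family, suppose $w \in \langle U_1; \cdots; U_n; \infy U \rangle$ and $w \leq^\omega w'$ via $f$ as above; note $w, w'$ are infinite here. Write $w = uv$ where $u \in \langle U_1; \cdots; U_n \rangle$ (a finite prefix) and $v$ contains infinitely many letters from $U$. Pick indices $i_1 < \cdots < i_n$ in the prefix part as before, and let $m$ be larger than all of them; then the witnessing indices for the $U$-part can be chosen to be an infinite set $J \subseteq \{m, m+1, \ldots\}$ with $w_j \in U$ for every $j \in J$. Applying $f$: the indices $f(i_1) < \cdots < f(i_n) < f(j)$ for $j \in J$ are all distinct and monotone, $w'_{f(i_l)} \in U_l$ (as $U_l$ upwards-closed) and $w'_{f(j)} \in U$ for each $j \in J$ (as $U$ upwards-closed), and $\{f(j) \mid j \in J\}$ is infinite since $f$ is injective. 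Splitting $w'$ after position $\max\{f(i_1), \ldots, f(i_n)\}$ exhibits $w'$ as a concatenation with the first part in $\langle U_1; \cdots; U_n \rangle$ and the second part containing infinitely many letters from $U$; hence $w' \in \langle U_1; \cdots; U_n; \infy U \rangle$.

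The argument is essentially routine; the only mild subtlety—and the point where one must be careful—is in the $\infy U$ case, making sure that the monotone injection $f$ sends the infinitely many $U$-witnessing positions to infinitely many positions (which needs injectivity of $f$, not just monotonicity) and that those images all lie strictly beyond the images of the finite prefix witnesses, so that the split of $w'$ is legitimate. Since $f$ is monotone and injective on the initial segment $\dom w = \nat$, choosing the split point of $w'$ to be $\max\{f(i_1),\ldots,f(i_n)\}$ (or any larger value) handles this, because every $j \in J$ satisfies $j > i_n$ and hence $f(j) > f(i_n) \geq$ every $f(i_l)$. With both subbasic families shown upwards-closed, the closure under unions and finite intersections gives the claim for all open sets, and complementation gives it for closed sets.
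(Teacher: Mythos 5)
Your proof is correct, and it is the routine verification that the paper leaves implicit (the Fact is stated with no proof): reduce to subbasic open sets, use that the $U_i$ and $U$ are upwards-closed in the specialization order of $X$, and transport witnesses along the monotone injection, noting that injectivity preserves the infinitude of the $\infy U$ witnesses. Nothing to add.
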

We will now show that, if $X$ is Noetherian, then the asymptotic
subword topology is the join of two simpler topologies, the prefix and
the suffix topology.  (The \emph{join} of two topologies is the
coarsest topology that is finer than both.  It has a base of open sets
of the form $U_1 \cap U_2$, where $U_1$ is open in the first topology
and $U_2$ is open in the other one.)

\subsection{The prefix topology}
\label{sec:prefix-topology}

For every $w \in X^{\leq \omega}$, the set
$\pref (w) \eqdef cl (\{w_{<n} \mid n \in \dom w\})$ is irreducible in
$X^*$: if $\mathcal U_1$ and $\mathcal U_2$ are two open subsets of
$X^*$ that intersect $\pref (w)$, then $w_{<m} \in \mathcal U_1$ and
$w_{<n} \in \mathcal U_2$ for some $m, n \in \dom w$; since all open
subsets of $X^*$ are upwards-closed in $\leq^*$, $w_{<\max (m, n)}$ is
both in $\mathcal U_1$ and in $\mathcal U_2$, showing that $\pref (w)$
intersects $\mathcal U_1 \cap \mathcal U_2$.
\begin{definition}
  \label{defn:pref}
  The \emph{prefix map} $\pref \colon X^{\leq\omega} \to \Sober (X^*)$
  is defined by $\pref (w) \eqdef cl (\{w_{<n} \mid n \in \dom w\})$
  for every $w \in X^{\leq \omega}$.

  The \emph{prefix topology} on $X^{\leq\omega}$ is the coarsest that
  makes $\pref$ continuous.
\end{definition}
In other words, a subbase of the prefix topology is given by sets of
the form $\pref^{-1} (\diamond U)$, where $U$ is open in $X^*$.

\begin{remark}
  \label{rem:pref:PF*}
  Let us take $X$ Noetherian.  Since $\pref (w)$ is in $\Sober (X^*)$,
  one must be able to write it as a word product.  The closed subsets
  $F_n \eqdef cl (\{w_m \mid m \in \dom w, m \geq n\})$, $n \in \nat$,
  form a descending sequence, so there is an index $n_0$ such that for
  every $n \geq n_0$, $F_n=F_{n_0}$.  Although we will not use it, one
  can show that
  $\pref (w) = (\dc w_0)^? (\dc w_1)^? \cdots (\dc w_{n-1})^? F_n^*$
  for every $n \geq n_0$.  We leave this as an exercise to the reader.
  As a hint, first, show that the right-hand side
  $\mathcal C \eqdef (\dc w_0)^? (\dc w_1)^? \cdots (\dc w_{n-1})^?
  F_n^*$ does not depend on $n \geq n_0$.  Second, show that
  $w_{<n} \in \mathcal C$ for every $n \in \nat$, and deduce that
  $\pref (w) \subseteq \mathcal C$.  In the reverse direction,
  consider any basic open set
  $\mathcal U \eqdef \langle U_1; U_2; \cdots ; U_k \rangle$ that
  intersects $\mathcal C$, say at
  $u \eqdef u_0 a_1 u_1 a_2 u_2 \cdots u_{k-1} a_k u_k$, where each
  $u_i$ is in $X^*$, each $a_i$ is in $U_i$.  By picking $n$ larger
  than the length of $u$ in the definition of $\mathcal C$, observe
  that $a_1 a_2 \cdots a_k$ is a subword of $w_{<n}$, and conclude
  that $\mathcal U$ intersects $\pref (w)$ at $w_{<n}$.
\end{remark}

For the next lemma, we recall that the sets of the form
$\langle U_1; U_2; \cdots; U_n \rangle$, where each $U_i$ is open in
$X$, form a base, not just a subbase of the asymptotic subword
topology on $X^*$.
\begin{lemma}
  \label{lemma:word:prefix:limsup}
  The prefix map $\pref$ is continuous from $X^{\leq \omega}$ with its
  asymptotic subword topology to $\Sober (X^*)$.  A base of the prefix
  topology is given by the sets
  $\langle U_1; U_2; \cdots; U_n \rangle$, where $U_1$, \ldots, $U_n$
  are open in $X$.

  The prefix topology is coarser than the asymptotic subword topology.
\end{lemma}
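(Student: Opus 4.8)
The plan is to prove the three assertions of Lemma~\ref{lemma:word:prefix:limsup} in order, using Remark~\ref{rem:initial} to reduce continuity checks to subbasic open sets and Lemma~\ref{lemma:irred:f-1} (via the fact that $\pref$ is, by Definition~\ref{defn:pref}, initial for the prefix topology) to transfer from $\Sober(X^*)$ to $X^{\leq\omega}$. Recall that a subbase of the prefix topology consists of the sets $\pref^{-1}(\diamond\,\mathcal U)$ with $\mathcal U$ open in $X^*$, and since the sets $\langle U_1;\cdots;U_n\rangle$ form a \emph{base} of the word topology on $X^*$, the sets $\pref^{-1}(\diamond\langle U_1;\cdots;U_n\rangle)$ form a subbase of the prefix topology. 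So the key computation is to identify $\pref^{-1}(\diamond\langle U_1;\cdots;U_n\rangle)$.

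First I would compute, for $w\in X^{\leq\omega}$, when $\pref(w)\in\diamond\langle U_1;\cdots;U_n\rangle$, i.e.\ when $cl(\{w_{<m}\mid m\in\dom w\})$ meets $\langle U_1;\cdots;U_n\rangle$. Since meeting an open set is equivalent to the underlying set meeting it, this holds iff some prefix $w_{<m}$ lies in $\langle U_1;\cdots;U_n\rangle$, i.e.\ iff $w_{<m}$ has a subword in $U_1U_2\cdots U_n$ for some $m$, i.e.\ iff $w$ itself has a \emph{finite} subword in $U_1U_2\cdots U_n$ (any such subword uses only finitely many letters, hence lies in some prefix). That is exactly the definition of $w\in\langle U_1;\cdots;U_n\rangle$ as a subset of $X^{\leq\omega}$. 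Hence $\pref^{-1}(\diamond\langle U_1;\cdots;U_n\rangle)=\langle U_1;\cdots;U_n\rangle$. This immediately gives the second claim: the sets $\langle U_1;\cdots;U_n\rangle$ form a subbase of the prefix topology, and since in $X^*$ (and likewise here) finite intersections of such sets are again of this form — $\langle U_1;\cdots;U_m\rangle\cap\langle V_1;\cdots;V_n\rangle$ is a finite union of sets $\langle W_1;\cdots;W_{m+n}\rangle$ obtained by shuffling — they actually form a base. (I should double-check that the shuffle argument, standard for the word topology on $X^*$, goes through verbatim for finite-or-infinite words; it does, because a finite subword witnessing membership in both sets can be merged into a single interleaved finite subword.)

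For the first claim, that $\pref$ is continuous from $X^\omega$ with the asymptotic subword topology to $\Sober(X^*)$: by Remark~\ref{rem:initial} it suffices to check that the preimage of each subbasic open $\diamond\mathcal U$ of $\Sober(X^*)$ is open in the asymptotic subword topology, and since $\{\langle U_1;\cdots;U_n\rangle\}$ is a base of $X^*$ it suffices to treat $\mathcal U=\langle U_1;\cdots;U_n\rangle$. By the computation above, $\pref^{-1}(\diamond\langle U_1;\cdots;U_n\rangle)=\langle U_1;\cdots;U_n\rangle$, which is one of the subbasic open sets of the asymptotic subword topology by Definition~\ref{defn:word:limsup}. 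Hence $\pref$ is continuous. The third claim, that the prefix topology is coarser than the asymptotic subword topology, is then immediate: the prefix topology is by definition the coarsest making $\pref$ continuous, and the asymptotic subword topology makes $\pref$ continuous by the first claim — or, even more directly, every subbasic open of the prefix topology is of the form $\langle U_1;\cdots;U_n\rangle$, which is subbasic open in the asymptotic subword topology.

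The only step that takes real care is the base (not just subbase) assertion, which rests on the interleaving/shuffle lemma for the sets $\langle U_1;\cdots;U_n\rangle$; I expect this to be the main obstacle, though it is routine and presumably already available from the $X^*$ theory cited in the excerpt (the sets $\langle U_1;\cdots;U_n\rangle$ "form a base, not just a subbase"). Everything else is a direct unwinding of definitions together with the elementary observation that a finite subword of an infinite word lies in a finite prefix, so that $\pref$ "sees" exactly the finite-subword information about $w$. No Noetherianness of $X$ is needed for this lemma.
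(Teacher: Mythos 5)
Your core computation is exactly the paper's: both proofs hinge on the identification $\pref^{-1}(\diamond\langle U_1;\cdots;U_n\rangle)=\langle U_1;\cdots;U_n\rangle$, obtained from the fact that $cl(A)$ meets an open set iff $A$ does, plus the observation that a finite subword of $w$ sits inside some finite prefix. The only divergence is in the ``base, not just subbase'' claim. The paper transports the base property from $X^*$: since $\pref^{-1}$ and $\diamond$ commute with finite intersections and arbitrary unions, a finite intersection of sets $\pref^{-1}(\diamond\langle\cdots\rangle)$ equals $\pref^{-1}(\diamond\,\mathcal U)$ for some open $\mathcal U$ of $X^*$, which is then re-expanded as a union of basic sets and pulled back. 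You instead invoke the shuffle identity directly on $X^{\leq\omega}$. That is also valid (and your parenthetical check that a witnessing finite subword for each factor can be interleaved is the right one), but note a small imprecision: the intersection $\langle U_1;\cdots;U_m\rangle\cap\langle V_1;\cdots;V_n\rangle$ is not a union of products of length exactly $m+n$; when the two witnessing subwords share positions you get shorter products with factors $U_i\cap V_j$ (e.g.\ $\langle U\rangle\cap\langle V\rangle$ contains one-letter words in $U\cap V$, which no length-$2$ shuffle captures). The correct statement is a finite union over shuffles-with-merges, of lengths between $\max(m,n)$ and $m+n$; this is the standard fact for $X^*$ you are deferring to, so nothing breaks, but the paper's commutation argument sidesteps having to restate it. Your remark that Noetherianness of $X$ is not needed is correct and matches the paper.
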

\begin{proof}
  Every open subset of $X^*$ is a union of basic open subsets of the
  form $\langle U_1; U_2; \cdots; U_n \rangle$ where each $U_i$ is
  open in $X$.  In order to show that $\pref$ is continuous, since
  $\diamond$ commutes with arbitrary unions, it is enough to show that
  $\pref^{-1} (\diamond (\langle U_1; U_2; \cdots; U_n \rangle))$ is
  open in the asymptotic subword topology.  That is the set of finite
  or infinite words $w$ such that $cl (\{w_{<m} \mid m \in \dom w\})$
  intersects the open set $\langle U_1; U_2; \cdots; U_n \rangle$;
  equivalently, such that some prefix $w_{<m}$ belongs to
  $\langle U_1; U_2; \cdots; U_n \rangle$.  The set
  $\pref^{-1} (\diamond (\langle U_1; U_2; \cdots; U_n \rangle))$ is
  therefore equal to the open subset
  $\langle U_1; U_2; \cdots; U_n \rangle$ of $X^{\leq\omega}$.

  This also shows that the sets
  $\langle U_1; U_2; \cdots; U_n \rangle = \pref^{-1} (\diamond
  (\langle U_1; U_2; \cdots; U_n \rangle))$ form a subbase of the
  prefix topology.

  Since $\pref^{-1}$ and $\diamond$ commute with finite intersections,
  since the sets $\langle U_1; U_2; \cdots; U_n \rangle$ form a base
  of the topology on $X^*$, every finite intersection $U$ of sets of
  the form
  $\pref^{-1} (\diamond (\langle U_1; U_2; \cdots; U_n \rangle))$ can
  be written as $\pref^{-1} (\diamond \mathcal U)$ where $\mathcal U$
  is a union of basic open sets of $X^*$.  Since $\pref^{-1}$ and
  $\diamond$ commute with all unions, $\mathcal U$ is also a union of
  subbasic open sets of the form
  $\langle U_1; U_2; \cdots; U_n \rangle$.  This shows that the given
  subbase is a base.

  The final claim is an immediate consequence of the first one.
\end{proof}

\subsection{The suffix topology}
\label{sec:suffix-topology}

In any complete lattice $L$, the \emph{limit superior} of a sequence
${(u_n)}_{n \in \nat}$ is
$\limsup_{n \in \nat} u_n = \bigwedge_{n \in \nat} \bigvee_{m \geq n}
u_m$.  We will use that notion in lattices of closed sets.  Then
$\limsup_{n \in \nat} C_n = \bigcap_{n \in \nat} cl (\bigcup_{m \geq
  n} C_m)$, where $cl$ is closure.  On a Noetherian space, descending
families of closed sets are stationary, so
$\limsup_{n \in \nat} C_n = cl (\bigcup_{m \geq n} C_m)$ for large
enough $n$.  (The quantifier ``for large enough $n$'' means ``for some
$n_0$, for every $n \geq n_0$''.)

Let the \emph{suffix map} $\suf \colon X^{\leq\omega} \to \Hoare (X)$
be defined by:
\begin{align*}
  \suf (w) & \eqdef \limsup_{n \in \nat} \dc \{w_m \mid m \in \dom w, m \geq n\} \\
  & = \bigcap_{n \in \nat} cl (\{w_m \mid m \in \dom w, m \geq n\}).
\end{align*}
Note that $\suf (w)$ is empty for every finite word $w$.  When $X$ is
Noetherian (or more generally, compact), this is an equivalence: $\suf
(w)=\emptyset$ if and only if $w \in X^*$.

The \emph{suffix topology} on $X^{\leq\omega}$ is the coarsest that
makes $\suf$ continuous.
\begin{lemma}
  \label{lemma:word:suffix:limsup}
  Let $X$ be a Noetherian space.  The suffix map $\suf$ is continuous
  from $X^{\leq\omega}$ with its asymptotic subword topology to
  $\Hoare (X)$.  A subbase of the suffix topology is given by the
  sets $\langle \infy U \rangle$, $U$ open in $X$.

  The suffix topology is coarser than the asymptotic subword topology.
\end{lemma}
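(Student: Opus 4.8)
The plan is to mirror the structure of the proof of Lemma~\ref{lemma:word:prefix:limsup}, replacing the prefix map by the suffix map and $\Sober(X^*)$ by $\Hoare(X)$. Recall from the discussion preceding this lemma that $\Hoare(X)$ carries the lower Vietoris topology, whose subbasic open sets are $\Diamond U = \{C \in \Hoare(X) \mid C \cap U \neq \emptyset\}$ for $U$ open in $X$; by definition of the suffix topology, a subbase for it is given by the sets $\suf^{-1}(\Diamond U)$, $U$ open in $X$. So the three assertions to prove are: (i) $\suf$ is continuous from $X^{\leq\omega}$ with the asymptotic subword topology to $\Hoare(X)$; (ii) each $\suf^{-1}(\Diamond U)$ equals $\langle \infy U \rangle$, so that these sets form a subbase of the suffix topology; and (iii) the suffix topology is coarser than the asymptotic subword topology, which is immediate from (i) since the suffix topology is the coarsest making $\suf$ continuous, and the asymptotic subword topology is one such.

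The heart of the matter is the identity $\suf^{-1}(\Diamond U) = \langle \infy U \rangle$ for $U$ open in $X$, which simultaneously gives (ii) and, combined with Definition~\ref{defn:word:limsup} (where $\langle \infy U \rangle$ appears as a subbasic open set of $X^\omega$, hence of $X^{\leq\omega}$), gives (i). Unwinding definitions: $w \in \suf^{-1}(\Diamond U)$ iff $\suf(w) \cap U \neq \emptyset$, i.e.\ iff $\bigl(\bigcap_{n \in \nat} cl(\{w_m \mid m \in \dom w, m \geq n\})\bigr) \cap U \neq \emptyset$. Since $X$ is Noetherian, the closed sets $C_n \eqdef cl(\{w_m \mid m \in \dom w, m \geq n\})$ form a descending chain and are therefore stationary: there is $n_0$ with $C_n = C_{n_0}$ for all $n \geq n_0$, so $\suf(w) = C_{n_0}$. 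Now $C_{n_0} \cap U \neq \emptyset$ iff $\{w_m \mid m \in \dom w, m \geq n_0\}$ intersects $U$ (here I use the standard fact, recalled in Section~\ref{sec:preliminaries}, that a closure meets an open set iff the set itself does), i.e.\ iff there is $m \geq n_0$ with $w_m \in U$. The key point is that, because $C_n = C_{n_0}$ for all $n \geq n_0$, this last condition is equivalent to: for every $n \in \nat$ there is $m \geq n$ with $w_m \in U$, i.e.\ $w$ has infinitely many letters in $U$; for finite words $w$ this condition fails and indeed $\suf(w) = \emptyset$, consistent with $w \notin \langle\infy U\rangle$. Finally, ``$w$ has infinitely many letters in $U$'' is exactly the statement $w \in \langle \infy U \rangle$ (taking $n = 0$, i.e.\ the empty prefix in the definition of $\langle U_1; \cdots; U_n; \infy U\rangle$). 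Thus $\suf^{-1}(\Diamond U) = \langle \infy U \rangle$.

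With that identity in hand the rest is bookkeeping. For continuity: every open subset of $\Hoare(X)$ is a union of finite intersections of sets $\Diamond U$, and $\suf^{-1}$ commutes with unions and finite intersections, so it suffices that each $\suf^{-1}(\Diamond U) = \langle \infy U\rangle$ be open in the asymptotic subword topology, which it is by Definition~\ref{defn:word:limsup}. The subbase claim is then exactly the content of the identity together with the definition of the suffix topology as the coarsest one making $\suf$ continuous (whose subbase is $\{\suf^{-1}(\Diamond U) \mid U \text{ open in } X\}$). The final claim follows as noted in (iii). The one step that requires genuine care --- and which I expect to be the only real obstacle --- is the equivalence between ``$w_m \in U$ for some $m \geq n_0$'' and ``$w_m \in U$ for infinitely many $m$''; this is precisely where Noetherianness of $X$ is used, via stationarity of the chain $(C_n)_n$, and one should spell out that if $w_m \in U$ for some single $m \geq n_0$ then $w_m \in C_{m+1} = C_{n_0} = C_n$ for all $n$, forcing each $C_n$ to meet $U$ and hence each tail $\{w_{m'} \mid m' \geq n\}$ to meet $U$.
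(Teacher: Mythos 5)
Your proposal is correct and follows essentially the same route as the paper: both reduce everything to the single identity $\suf^{-1}(\Diamond U) = \langle \infy U\rangle$, proved via stationarity of the descending chain $C_n = cl(\{w_m \mid m \in \dom w,\ m \geq n\})$ (this is where Noetherianness enters) together with the fact that a closure meets an open set iff the underlying set does. The only cosmetic difference is that you bootstrap from ``one letter of the tail in $U$'' to ``infinitely many'' via $C_{m+1}=C_{n_0}$, whereas the paper gets infinitely many directly by applying $\suf(w)=C_n$ for every $n\geq n_0$; these are the same argument.
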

\begin{proof}
  A subbase of the suffix topology is given by the sets
  $\suf^{-1} (\Diamond U)$, $U$ open in $X$.  We claim that
  $\suf^{-1} (\Diamond U) = \langle \infy U \rangle$.  This readily
  implies the first and the second claim, and the third one will be an
  immediate consequence.

  Let $w \in X^{\leq\omega}$, and let $n_0$ be such that
  $\limsup_{n \in \nat} \dc \{w_m \mid m \in \dom w, m \geq n\} = cl
  (\{w_m \mid m \in \dom w, m \geq n\})$ for every $n \geq n_0$.  If
  $w \in \suf^{-1} (\Diamond U)$, then for every $n \geq n_0$,
  $cl (\{w_m \mid m \in \dom w, m \geq n\})$ intersects $U$, so
  $w_m \in U$ for some $m \in \dom w$ such that $m \geq n$.  Hence
  $w \in \langle \infy U \rangle$.  Conversely, if
  $w \in \langle \infy U \rangle$, then $\dom w = \nat$ and there are
  infinitely many positions $n \geq n_0$ where $w_n$ is in $U$.  Take
  one.  Then $cl (\{w_m \mid m \in \dom w, m \geq n_0\})$ intersects
  $U$ at $w_n$, showing that $w$ is in $\suf^{-1} (\Diamond U)$.
\end{proof}

\begin{proposition}
  \label{prop:word:limsup:join}
  Let $X$ be a Noetherian space.  The asymptotic subword topology on
  $X$ is the join of the prefix and the suffix topologies.  The
  function
  $\langle \pref, \suf \rangle \colon X^{\leq\omega} \to \Sober (X^*)
  \times \Hoare (X)$ that maps $w$ to $(\pref (w), \suf (w))$ is
  initial.
\end{proposition}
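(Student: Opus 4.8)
The plan is to use Remark~\ref{rem:initial} together with Lemma~\ref{lemma:word:prefix:limsup} and Lemma~\ref{lemma:word:suffix:limsup}. Those two lemmas already tell us that $\pref$ is continuous from $X^{\leq\omega}$ (with the asymptotic subword topology) to $\Sober(X^*)$ and that $\suf$ is continuous to $\Hoare(X)$; hence $\langle\pref,\suf\rangle$ is continuous to the product $\Sober(X^*)\times\Hoare(X)$. By the general construction of initial maps recalled just before Lemma~\ref{lemma:initial:noeth}, the coarsest topology on $X^{\leq\omega}$ making $\langle\pref,\suf\rangle$ continuous is precisely the join of the coarsest topology making $\pref$ continuous (the prefix topology) with the coarsest one making $\suf$ continuous (the suffix topology). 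So the two assertions of the proposition are really one: it suffices to prove that $\langle\pref,\suf\rangle$ is initial, i.e.\ that every subbasic open set of $X^{\leq\omega}$ is the preimage of an open set of $\Sober(X^*)\times\Hoare(X)$, and then the asymptotic subword topology coincides with the join topology.

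Concretely, by Remark~\ref{rem:initial} I only need to exhibit, for each subbasic open set of Definition~\ref{defn:word:limsup}, an open set of $\Sober(X^*)\times\Hoare(X)$ whose preimage under $\langle\pref,\suf\rangle$ it is. For $\langle U_1;U_2;\cdots;U_n\rangle$, the computation already done inside the proof of Lemma~\ref{lemma:word:prefix:limsup} shows $\langle U_1;\cdots;U_n\rangle=\pref^{-1}(\diamond\langle U_1;\cdots;U_n\rangle)$, which is the preimage of the open set $\diamond\langle U_1;\cdots;U_n\rangle\times\Hoare(X)$ of the product. For $\langle U_1;\cdots;U_n;\infy U\rangle$, I would write it as $\langle U_1;\cdots;U_n\rangle\cap\langle\infy U\rangle$: a word lies in the former set iff it has a finite prefix in $\langle U_1;\cdots;U_n\rangle$ and also belongs to $\langle\infy U\rangle$ — note that for such a word $w$ one can always slide the witnessing prefix far enough to the right so that the ``infinitely many letters in $U$'' all occur after it, using that open subsets of $X^*$ are upwards-closed in $\leq^*$ so the prefix condition is preserved when we lengthen the prefix. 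By Lemma~\ref{lemma:word:suffix:limsup}, $\langle\infy U\rangle=\suf^{-1}(\Diamond U)$, so $\langle U_1;\cdots;U_n;\infy U\rangle$ is the preimage of $\bigl(\diamond\langle U_1;\cdots;U_n\rangle\times\Hoare(X)\bigr)\cap\bigl(\Sober(X^*)\times\Diamond U\bigr)$, an open subset of the product. Since these sets form a subbase of the asymptotic subword topology, Remark~\ref{rem:initial} gives that $\langle\pref,\suf\rangle$ is initial.

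The one genuine verification, the place where a little care is needed, is the identity $\langle U_1;\cdots;U_n;\infy U\rangle=\langle U_1;\cdots;U_n\rangle\cap\langle\infy U\rangle$. The inclusion from left to right is immediate from the definition of $\langle U_1;\cdots;U_n;\infy U\rangle$ as a concatenation $uw$. For right to left, given $w$ with some finite subword of $w_{<k}$ lying in $U_1U_2\cdots U_n$ and with infinitely many letters of $w$ in $U$, one picks a position $k'\geq k$ with $w_{k'-1}\in\ldots$ — more simply, one keeps the same witnessing prefix $w_{<k}$ and observes that, since $w\in\langle\infy U\rangle$ forces $\dom w=\nat$ and infinitely many $w_m\in U$, infinitely many of those occur at positions $m\geq k$; writing $w=u\,w_{\geq k}$ with $u=w_{<k}\in\langle U_1;\cdots;U_n\rangle$ and $w_{\geq k}$ still containing infinitely many letters in $U$ exhibits $w\in\langle U_1;\cdots;U_n;\infy U\rangle$. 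Everything else is a mechanical use of the fact that $\diamond$, $\Diamond$ and preimage commute with unions and finite intersections, already invoked repeatedly above, so I expect no further obstacle.
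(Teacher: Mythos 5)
Your proof is correct and follows essentially the same route as the paper: continuity of $\pref$ and $\suf$ from Lemmas~\ref{lemma:word:prefix:limsup} and~\ref{lemma:word:suffix:limsup}, plus the identity $\langle U_1;\cdots;U_n;\infy U\rangle=\langle U_1;\cdots;U_n\rangle\cap\langle\infy U\rangle$, which the paper asserts without detail and which you verify correctly by pushing the witnessing prefix boundary to the right. No gaps.
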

\begin{proof}
  The asymptotic subword topology is finer than both the prefix and
  the suffix topologies, by Lemma~\ref{lemma:word:prefix:limsup} and
  Lemma~\ref{lemma:word:suffix:limsup}.  Conversely, every subbasic
  open set $\langle U_1; U_2; \allowbreak \cdots; U_k \rangle$ is
  prefix-open, and every subbasic open set
  $\langle U_1; U_2; \allowbreak \cdots; U_k ; \infy U\rangle$ is the
  intersection of the prefix-open set
  $\langle U_1; U_2; \allowbreak \cdots; U_k \rangle$ with the
  suffix-open set $\langle \infy U \rangle$.

  By the first part of the lemma, the asymptotic subword topology on
  $X^{\leq \omega}$ is the coarsest that makes both $\pref$ and $\suf$
  continuous, hence the coarsest that makes
  $\langle \pref, \suf \rangle$ continuous.  It follows that
  $\langle \pref, \suf \rangle$ is initial.
\end{proof}

Property (A) follows:
\begin{thm}
  \label{thm:limsup:noeth}
  For every space $X$, $X^{\leq\omega}$ (with the asymptotic subword
  topology) is Noetherian if and only if $X$ is Noetherian.  Similarly
  with $X^\omega$.
\end{thm}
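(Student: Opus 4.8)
The plan is to treat the two implications separately and to reduce the statements about $X^\omega$ to those about $X^{\leq\omega}$, using that $X^\omega$ carries the subspace topology inherited from $X^{\leq\omega}$ and that Noetherianness passes to subspaces.

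\emph{If $X$ is Noetherian.} I would first record that $\Sober(X^*)\times\Hoare(X)$ is Noetherian. Indeed, $X^*$ with the word topology is Noetherian (property (A) for finite words, cf.\ the introduction and \cite[Section~9.7]{JGL-topology}), hence $\Sober(X^*)$ is Noetherian, since a space is Noetherian iff its sobrification is; $\Hoare(X)$ is Noetherian by Proposition~\ref{prop:pow:Noeth}; and finite products of Noetherian spaces are Noetherian. By Proposition~\ref{prop:word:limsup:join}, the map $\langle\pref,\suf\rangle\colon X^{\leq\omega}\to\Sober(X^*)\times\Hoare(X)$ is initial, so $X^{\leq\omega}$ is Noetherian by Lemma~\ref{lemma:initial:noeth}. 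Since $X^\omega$ is a subspace of $X^{\leq\omega}$, it is Noetherian as well.

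\emph{If $X^{\leq\omega}$ or $X^\omega$ is Noetherian.} It suffices to exhibit $X$ as a subspace of $X^\omega$, and to recall that $X^\omega$ is in turn a subspace of $X^{\leq\omega}$. The map $\eta$ sending $x\in X$ to the constant word $xxx\cdots$ does the job: for any opens $U_1,\ldots,U_n,U$ of $X$ one computes $\eta^{-1}(\langle U_1;\cdots;U_n\rangle)=\bigcap_{i=1}^n U_i$ and $\eta^{-1}(\langle U_1;\cdots;U_n;\infy U\rangle)=U\cap\bigcap_{i=1}^n U_i$, so $\eta$ is continuous; conversely every open $V$ of $X$ equals $\eta^{-1}(\langle V\rangle)$, so $\eta$ is initial; being visibly injective, $\eta$ is a topological embedding. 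Hence $X$ is homeomorphic to a subspace of $X^\omega$ and of $X^{\leq\omega}$, and is therefore Noetherian whenever either of those spaces is.

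The proof is essentially an assembly of the earlier results, so there is no serious obstacle; the one point deserving an explicit (if routine) check is that the constant-word map $\eta$ is a topological embedding, which I would carry out as above. One minor caveat worth stating: Proposition~\ref{prop:word:limsup:join} assumes $X$ Noetherian, so it is available only in the ``if'' direction — but the ``only if'' direction does not need it, the embedding $\eta$ being defined for an arbitrary space $X$.
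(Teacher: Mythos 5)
Your proof is correct and follows essentially the same route as the paper: the ``if'' direction via the initial map $\langle\pref,\suf\rangle$ into the Noetherian space $\Sober(X^*)\times\Hoare(X)$ together with Lemma~\ref{lemma:initial:noeth}, and the ``only if'' direction via the constant-word embedding (the paper uses $g^{-1}(\langle\infy U\rangle)=U$ where you use $\eta^{-1}(\langle U\rangle)=U$, an immaterial difference). Your explicit caveat that Proposition~\ref{prop:word:limsup:join} requires $X$ Noetherian, and is therefore only invoked in the direction where that hypothesis is available, is well taken.
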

\begin{proof}
  If $X$ is Noetherian, then so are $X^*$, $\Sober (X^*)$,
  $\Hoare (X)$ and their product $\Sober (X^*) \times \Hoare (X)$, as
  we have already seen in Section~\ref{sec:preliminaries} and
  Section~\ref{sec:powersets}.  Since
  $f \eqdef \langle \pref, \suf\rangle$ is initial
  (Proposition~\ref{prop:word:limsup:join}),
  Lemma~\ref{lemma:initial:noeth} ensures that $X^{\leq\omega}$ and
  its subspace $X^\omega$ are Noetherian in the asymptotic subword
  topology.

  In the converse direction, we use the following argument, which
  works in both the $X^{\leq\omega}$ and $X^\omega$ cases.  Let $g$ be
  the function that maps every $x \in X$ to the infinite word
  $x^\omega \eqdef x x \cdots x \cdots$ (in $X^{\leq\omega}$, resp.,
  $X^\omega$).  This is continuous since
  $g^{-1} (\langle U_1; U_2; \allowbreak \cdots; U_k \rangle) = U_1
  \cap U_2 \cap \cdots \cap U_k$ and
  $g^{-1} (\langle U_1; U_2; \allowbreak \cdots; U_k ; \infy U\rangle)
  = U_1 \cap U_2 \cap \cdots \cap U_k \cap U$.  With $k \eqdef 0$, we
  also obtain that every open subset $U$ of $X$ is obtained as
  $g^{-1} (\langle \infy U \rangle)$, so $g$ is initial.  By
  Lemma~\ref{lemma:initial:noeth}, if $X^{\leq\omega}$ (resp.,
  $X^\omega$) is Noetherian, then so is $X$.
\end{proof}
From now on, and unless noted otherwise, we understand
$X^{\leq\omega}$ (and $X^\omega$) with the asymptotic subword
topology.

\subsection{A few useful auxiliary results}
\label{sec:concatenation}

We pause for a moment, and establish two useful results.

\begin{proposition}
  \label{prop:cat:cont}
  The concatenation map
  $cat \colon X^* \times X^{\leq\omega} \to X^{\leq\omega}$ is
  continuous.
\end{proposition}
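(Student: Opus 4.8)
The plan is to verify continuity of $cat$ directly against the subbase of the asymptotic subword topology on the target $X^{\leq\omega}$, namely against sets of the form $\langle U_1; \cdots; U_n \rangle$ and $\langle U_1; \cdots; U_n; \infy U \rangle$, and to exhibit the preimage of each as an explicit open set of the product $X^* \times X^{\leq\omega}$. First I would recall that a subbase of the topology of $X^*$ is given by the sets $\langle V_1; \cdots; V_p \rangle$ (relative to $X^*$), so a subbase of the product is given by the ``rectangles'' $\langle V_1; \cdots; V_p \rangle \times \langle W_1; \cdots; W_q \rangle$ and $\langle V_1; \cdots; V_p \rangle \times \langle W_1; \cdots; W_q; \infy W \rangle$; and it suffices by Remark~\ref{rem:initial} (or just directly, since continuity only needs preimages of subbasic opens to be open) to show each $cat^{-1}$ of a subbasic open of $X^{\leq\omega}$ is open.

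The key computation is the following splitting identity. A pair $(u, w) \in X^* \times X^{\leq\omega}$ satisfies $cat(u,w) = uw \in \langle U_1; \cdots; U_n \rangle$ if and only if there is a position $k$ with $0 \le k \le n$ such that $u \in \langle U_1; \cdots; U_k \rangle$ (a finite-word membership) and $w \in \langle U_{k+1}; \cdots; U_n \rangle$ (the remaining factors must be found in the suffix $w$). Hence
\[
cat^{-1}(\langle U_1; \cdots; U_n \rangle) = \bigcup_{k=0}^{n} \langle U_1; \cdots; U_k \rangle \times \langle U_{k+1}; \cdots; U_n \rangle,
\]
a finite union of open rectangles, hence open. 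For the other family of subbasic opens, note that $uw \in \langle U_1; \cdots; U_n; \infy U \rangle$ forces $w$ to be infinite and requires the finite witnessing prefix of $uw$ to lie in $\langle U_1; \cdots; U_n \rangle$ while $w$, or rather a suffix of $w$ large enough, still contains infinitely many letters from $U$; since dropping finitely many letters does not change ``infinitely many letters in $U$'', one gets the analogous finite splitting
\[
cat^{-1}(\langle U_1; \cdots; U_n; \infy U \rangle) = \bigcup_{k=0}^{n} \langle U_1; \cdots; U_k \rangle \times \langle U_{k+1}; \cdots; U_n; \infy U \rangle,
\]
again a finite union of open rectangles. (In the extreme case $k = n$ the second factor is $\langle \infy U \rangle$, i.e.\ $X^\omega$ itself intersected with the condition, which is open.)

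The main obstacle, such as it is, is bookkeeping rather than depth: one must be careful that a finite subword of the concatenation $uw$ witnessing membership in $\langle U_1; \cdots; U_n \rangle$ can be split at a letter boundary between the $u$-part and the $w$-part, so that the first $k$ ``milestones'' $U_1, \dots, U_k$ are realized within $u$ and the remaining $U_{k+1}, \dots, U_n$ within $w$ — this is exactly where $0 \le k \le n$ ranges over all boundary choices, and the union over $k$ captures every possible placement of the boundary. Once this combinatorial splitting is written out, both displayed identities are immediate, each preimage is manifestly a finite union of products of (sub)basic opens, and continuity follows. I would close by remarking that $cat$ restricts to a continuous map $X^* \times X^\omega \to X^\omega$ and to the already-known continuous concatenation $X^* \times X^* \to X^*$, both being special cases of the same identities.
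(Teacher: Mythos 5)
Your proposal is correct and follows essentially the same route as the paper: both arguments rest on splitting the witnessing subword of $uw$ at the boundary between $u$ and $w$, yielding for each subbasic open $W$ of $X^{\leq\omega}$ a decomposition of $cat^{-1}(W)$ into (a finite union of) open rectangles $\langle U_1;\cdots;U_k\rangle \times \langle U_{k+1};\cdots;U_n\rangle$, resp.\ $\langle U_1;\cdots;U_k\rangle \times \langle U_{k+1};\cdots;U_n;\infy U\rangle$. The paper phrases this pointwise (exhibiting an open rectangle neighborhood of each point of the preimage) while you state the global identity, but this is only a presentational difference.
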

\begin{proof}
  Let $(u, w)$ be any point in $cat^{-1} (W)$, where
  $W = \langle U_1; U_2 ; \cdots ; U_k \rangle$ (resp.,
  $W = \langle U_1; U_2 ; \cdots ; U_k; \infy U \rangle$), where
  $U_1$, \ldots, $U_k$, and $U$ are open in $X$.  There is an index
  $j$, $0\leq j\leq k$ such that $u$ is in
  $\langle U_1; U_2 ; \cdots ; U_j \rangle$, and $w$ is in
  $\langle U_{j+1}; \cdots ; U_k \rangle$ (resp.,
  $\langle U_{j+1}; \cdots ; U_k ;\infy U \rangle$) Then
  $\langle U_1; U_2 ; \cdots ; U_j \rangle \times \langle U_{j+1};
  \cdots ; U_k \rangle$ (resp.,
  $\langle U_1; U_2 ; \cdots ; U_j \rangle \times \langle U_{j+1};
  \cdots ; U_k ; \infy U \rangle$) is an open neighborhood of $(u, w)$
  that is included in $cat^{-1} (W)$.
\end{proof}

The sets $\langle U_1; U_2 ; \cdots ; U_k \rangle$ and
$\langle U_1; U_2 ; \cdots ; U_k ; \infy U\rangle$ only form a subbase
of $X^\omega$.  We obtain a base as follows.
$\langle U_1; U_2 ; \cdots ; U_k; \infy {V_1} \cap \cdots \cap \cdots
\infy {V_\ell} \rangle$ denotes the set of finite-or-infinite words
that contain letters from $U_1$, $U_2$, \ldots, $U_k$ in that order,
followed by a suffix that contains contains infinitely many letters
from $V_1$, and also infinitely many from $V_2$, \ldots, and
infinitely many from $V_\ell$.  We allow $\ell$ to be equal to $0$; if
$\ell \neq 0$, then that set only contains infinite words.
\begin{lemma}
  \label{lemma:limsup:base}
  Let $X$ be a Noetherian space.  A base of the asymptotic subword
  topology on $X^{\leq\omega}$ is given by the subsets
  $\langle U_1; U_2 ; \cdots ; U_k; \infy {V_1} \cap \cdots \cap
  \cdots \infy {V_\ell} \rangle$ where $U_1$, \ldots, $U_n$, $V_1$,
  \ldots, $V_\ell$ are open in $X$.
\end{lemma}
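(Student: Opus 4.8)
The plan is to leverage Proposition~\ref{prop:word:limsup:join}, which tells us that the asymptotic subword topology on $X^{\leq\omega}$ is the join of the prefix topology and the suffix topology. As recalled in the parenthetical remark preceding Section~\ref{sec:prefix-topology}, the join of two topologies has a base consisting of the intersections $P \cap S$, where $P$ ranges over a base of the first topology and $S$ over a base of the second. So it suffices to combine a base of the prefix topology with a base of the suffix topology, and then to rewrite each intersection in the required form.

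First I would pin down a base of the suffix topology. By Lemma~\ref{lemma:word:suffix:limsup}, $\langle \infy U\rangle = \suf^{-1}(\Diamond U)$ for $U$ open in $X$, and these sets form a subbase of the suffix topology. The lower Vietoris topology on $\Hoare (X)$ has a base consisting of the finite intersections $\Diamond V_1 \cap \cdots \cap \Diamond V_\ell$, and since $\suf^{-1}$ commutes with finite intersections and with arbitrary unions, the sets $\suf^{-1}(\Diamond V_1 \cap \cdots \cap \Diamond V_\ell) = \langle \infy{V_1}\rangle \cap \cdots \cap \langle \infy{V_\ell}\rangle$ form a base of the suffix topology (with $\ell = 0$ giving $X^{\leq\omega}$ itself). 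A base of the prefix topology is given by the sets $\langle U_1; U_2; \cdots; U_k\rangle$ by Lemma~\ref{lemma:word:prefix:limsup}. Hence a base of the asymptotic subword topology is given by the sets $\langle U_1; \cdots; U_k\rangle \cap \langle \infy{V_1}\rangle \cap \cdots \cap \langle \infy{V_\ell}\rangle$, each of which is visibly open in the asymptotic subword topology since all of its factors are subbasic.

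It then remains to check that $\langle U_1; \cdots; U_k\rangle \cap \langle \infy{V_1}\rangle \cap \cdots \cap \langle \infy{V_\ell}\rangle = \langle U_1; \cdots; U_k; \infy{V_1} \cap \cdots \cap \infy{V_\ell}\rangle$. The inclusion from right to left is immediate from the definitions. For the converse, if $w \in X^{\leq\omega}$ has a finite subword lying in $U_1 U_2 \cdots U_k$, occupying positions up to some $N \in \nat$, and has infinitely many letters in each $V_i$, then write $w = u w'$ with $u \eqdef w_{<N+1}$ and $w' \eqdef w_{\geq N+1}$: then $u \in \langle U_1; \cdots; U_k\rangle$ in $X^*$, while $w'$ still contains infinitely many letters in each $V_i$, since only finitely many letters were removed. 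This is exactly the argument already used in the proof of Proposition~\ref{prop:word:limsup:join} for the case $\ell = 1$, and it goes through unchanged for arbitrary $\ell$. I expect this last identification to be the only mildly delicate step, and within it the main thing to be careful about is the bookkeeping of the degenerate cases $k = 0$, $\ell = 0$, and $\ell \geq 1$ (in which the set contains only infinite words); none of these causes any genuine difficulty.
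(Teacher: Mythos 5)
Your proposal is correct and follows essentially the same route as the paper: invoke Proposition~\ref{prop:word:limsup:join} to reduce to intersecting the base $\langle U_1;\cdots;U_k\rangle$ of the prefix topology (Lemma~\ref{lemma:word:prefix:limsup}) with the base of finite intersections $\langle \infy{V_1}\rangle \cap \cdots \cap \langle \infy{V_\ell}\rangle$ of the suffix topology. The only difference is that you spell out the set-theoretic identification of this intersection with $\langle U_1;\cdots;U_k;\infy{V_1}\cap\cdots\cap\infy{V_\ell}\rangle$, which the paper treats as immediate; your verification of it is correct.
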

\begin{proof}
  As a consequence of Proposition~\ref{prop:word:limsup:join}, a base
  is given by intersections of one element
  $\langle U_1; U_2 ; \cdots ; U_k \rangle$ of the base of the prefix
  topology given in Lemma~\ref{lemma:word:prefix:limsup}, and of one
  element of a base of the suffix topology.  For the latter, one can
  take finite intersections
  $\langle \infy {V_1} \rangle \cap \cdots \cap \langle \infy {V_\ell}
  \rangle = \langle \infy {V_1} \cap \cdots \cap \infy {V_\ell}
  \rangle$.
\end{proof}

\subsection{The sobrification of \texorpdfstring{$X^{\leq\omega}$}{X≤ω}}
\label{sec:sobrification-xomega}



We extend the notion of word product to \emph{finite-or-infinite word
  products}: $\omega$-regular expressions of the form
$P F^{\leq\omega}$, where $P$ is a word product and $F$ is a closed
subset of $X$.  $P F^{\leq\omega}$ denotes the sets of finite or
infinite words that are obtained as the concatenation of a finite word
in $P$ with a finite-or-infinite word whose letters are all in $F$.
We also write $P F^\omega$ for $P F^{\leq\omega} \cap X^\omega$: this
is the set of infinite words obtained as the concatenation of a finite
word in $P$ with an infinite word whose letters are all in $F$.  This
is empty if $F$ is empty.  We call \emph{infinite word products} the
expressions $P F^\omega$ where $P$ is a word product and $F$ is a
\emph{non-empty} closed subset of $X$.
\begin{proposition}
  \label{prop:irred=>prod}
  Let $X$ be a Noetherian space.  Every irreducible closed subset of
  $X^{\leq\omega}$ is an finite-or-infinite word product.
\end{proposition}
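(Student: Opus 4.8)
The plan is to transfer the problem through the initial map $f=\langle\pref,\suf\rangle\colon X^{\leq\omega}\to\Sober(X^*)\times\Hoare(X)$ of Proposition~\ref{prop:word:limsup:join}, and to reduce everything to one combinatorial fact. Let $D$ be irreducible closed in $X^{\leq\omega}$. By Lemma~\ref{lemma:irred:f-1}, $D=f^{-1}(C)$ with $C=cl(f[D])$ irreducible closed in $\Sober(X^*)\times\Hoare(X)$. That space is a product of sober spaces ($\Sober(X^*)$ is always sober, and $\Hoare(X)\cong\Sober(\pow(X))$ by Lemma~\ref{lemma:H:sobr}), hence sober, and its irreducible closed subsets are products of irreducible closed subsets of the two factors; since both factors are sober, $C=\dc P_0\times\dc F_0$ for some $P_0\in\Sober(X^*)$ and $F_0\in\Hoare(X)$, and chasing the definitions one gets $P_0=cl_{X^*}(\{w_{<n}\mid w\in D,\ n\in\dom w\})$ and $F_0=cl_X(\bigcup_{w\in D}\suf(w))$. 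Because $X$ is Noetherian, $P_0$ is a word product and $F_0$ is a closed subset of $X$, and $D=f^{-1}(\dc P_0\times\dc F_0)=\{w\in X^{\leq\omega}\mid \pref(w)\subseteq P_0,\ \suf(w)\subseteq F_0\}$. Concretely: since $P_0$ is closed, $\pref(w)\subseteq P_0$ says that every finite prefix of $w$ lies in the language $P_0$; and since $X$ is Noetherian, $\suf(w)=cl(\{w_m\mid m\in\dom w,\ m\ge n\})$ for $n$ large enough, so $\suf(w)\subseteq F_0$ says that all but finitely many letters of $w$ lie in $F_0$ (vacuous for finite $w$).

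I would then reduce the proposition to the single claim that $P_0F_0\subseteq P_0$ as languages. Granting it, $P_0F_0^*=P_0$, and from the concrete descriptions one checks directly that $D=P_0F_0^{\leq\omega}$: a finite word lies in $D$ iff all its prefixes lie in $P_0$, i.e.\ iff it lies in $P_0=P_0F_0^*$, so $D\cap X^*=P_0F_0^*$; and an infinite word $w$ lies in $D$ iff $w=w_{<n_0}\,w_{\ge n_0}$ with $w_{<n_0}\in P_0$ and $w_{\ge n_0}\in F_0^\omega$ (forward: take $n_0$ past the last letter outside $F_0$; backward: every prefix of such a $w$ lies in $P_0F_0^*=P_0$, using the claim), i.e.\ iff $w\in P_0F_0^\omega$, so $D\cap X^\omega=P_0F_0^\omega$. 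Hence $D=P_0F_0^{\leq\omega}$ is a finite-or-infinite word product, the case $F_0=\emptyset$ simply giving $D=P_0=P_0\emptyset^{\leq\omega}$. So everything rests on the claim.

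To prove the claim I would use irreducibility of $D$ crucially; it is vacuous if $F_0=\emptyset$, so assume $F_0\neq\emptyset$. Fix $u\in P_0$ and $x\in F_0$, and write $\Pi:=\{w_{<n}\mid w\in D,\ n\in\dom w\}$, so $P_0=cl(\Pi)$. It suffices to show that every basic open neighbourhood $\langle U_1;\cdots;U_k\rangle$ of $ux$ in $X^*$ meets $\Pi$; fix positions $p_1<\cdots<p_k$ witnessing $ux\in\langle U_1;\cdots;U_k\rangle$. If $p_k<|u|$, these positions lie inside $u$, so $u\in\langle U_1;\cdots;U_k\rangle$, and since $u\in cl(\Pi)$ this open set already meets $\Pi$. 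Otherwise $p_k=|u|$, so $x\in U_k$ and $u\in\langle U_1;\cdots;U_{k-1}\rangle$. Since $x\in F_0=cl(\bigcup_{w\in D}\suf(w))$ and $U_k$ is open, $U_k$ meets $\suf(w')$ for some $w'\in D$, whence $w'\in\suf^{-1}(\Diamond U_k)=\langle\infy{U_k}\rangle$ by Lemma~\ref{lemma:word:suffix:limsup}; and since $\langle U_1;\cdots;U_{k-1}\rangle$ is open in $X^*$, contains $u$, and $u\in cl(\Pi)$, some $w''\in D$ has a prefix in $\langle U_1;\cdots;U_{k-1}\rangle$, hence $w''\in\langle U_1;\cdots;U_{k-1}\rangle$ as an element of $X^{\leq\omega}$. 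Now $\langle U_1;\cdots;U_{k-1}\rangle$ and $\langle\infy{U_k}\rangle$ are open in $X^{\leq\omega}$ and both meet $D$; since $D$ is irreducible, their intersection meets $D$, giving $w\in D$ with a subword in $U_1\cdots U_{k-1}$ at positions $q_1<\cdots<q_{k-1}$ and with infinitely many letters in $U_k$. Picking $q_k>q_{k-1}$ with $w_{q_k}\in U_k$, the finite prefix $w_{<q_k+1}$ has a subword in $U_1\cdots U_k$, so $w_{<q_k+1}\in\langle U_1;\cdots;U_k\rangle\cap\Pi$. Hence $ux\in P_0$, proving the claim.

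The main obstacle is exactly this last step: one needs a single word of $D$ simultaneously witnessing a finite prefix-pattern inherited from $P_0$ and an asymptotic occurrence inherited from $F_0$, and it is irreducibility of $D$ itself — not merely of its two projections $\pref[D]$ and $\suf[D]$ — that supplies it. The passage through the initial map and the sober product in the first paragraph, and the verification that $D=P_0F_0^{\leq\omega}$ in the second, are routine.
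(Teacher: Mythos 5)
Your proof is correct, and while it shares the paper's overall framework---pushing $D$ through the initial map $f=\langle\pref,\suf\rangle$ and using sobriety of $\Sober(X^*)\times\Hoare(X)$ to reduce to $D=f^{-1}(\dc(P,F))$ with $P$ a word product and $F$ closed---the way you establish the crucial compatibility between $P$ and $F$ is genuinely different. The paper chooses a representing pair $(P,F)$ with the number of atoms of $P$ minimal and then $F$ minimal, and argues by contradiction on the syntactic shape of $P$: if the last atom were of the form $C^?$, then $D\subseteq X^*\cup\pref^{-1}(P')$, violating irreducibility plus minimality; a second argument of the same kind forces $F\subseteq F'$ where ${F'}^*$ is the last atom, whence $PF^*=P$. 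You instead work with the canonical least pair $(P_0,F_0)=(cl(\Pi),\,cl(\bigcup_{w\in D}\suf(w)))$ and prove the absorption property $P_0F_0\subseteq P_0$ directly, by showing that every basic neighbourhood $\langle U_1;\cdots;U_k\rangle$ of $ux$ meets $\Pi$; the step where irreducibility of $D$ merges a prefix witness (some $w''\in D\cap\langle U_1;\cdots;U_{k-1}\rangle$) with a suffix witness (some $w'\in D\cap\langle\infy{U_k}\rangle$) into a single word of $D$ is exactly the right use of irreducibility and plays the role of the paper's case split. Your route avoids minimality choices and any appeal to the atom-by-atom structure of $P$ (you only need, as a black box, that irreducible closed subsets of $X^*$ are word products), which is arguably cleaner; the paper's route yields slightly more explicit structural information (that $P$ literally ends in a star atom ${F'}^*\supseteq F$). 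The final verification that $D=P_0F_0^{\leq\omega}$ coincides with the last part of the paper's proof and with Fact~\ref{fact:product}. One cosmetic point: like the paper, you implicitly read $\pref(w)$ for a finite word $w$ as containing $w$ itself, which requires letting $n$ range up to the length of $w$ in Definition~\ref{defn:pref}; this is a shared convention, not a gap in your argument.
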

\begin{proof}
  Let $D$ be an irreducible closed subset of $X^{\leq\omega}$.  The
  map $f \eqdef \langle \pref, \suf \rangle$ is initial by
  Proposition~\ref{prop:word:limsup:join}.  We can therefore apply
  Lemma~\ref{lemma:irred:f-1} to $f$, and we obtain that $D$ is the
  inverse image of some irreducible closed subset of
  $\Sober (X^*) \times \Hoare (X)$ by $f$.  Since the latter space is
  already sober ($\Hoare (X)$ is sober by Lemma~\ref{lemma:H:sobr}, or
  by \cite[Proposition~1.7]{Schalk:PhD}), an irreducible subset is
  just the downward closure of a point $(P, F)$ of
  $\Sober (X^*) \times \Hoare (X)$.  Here $P$ must be a word product
  $A_1 A_2 \cdots A_N$, and $F$ must be a closed subset of $X$.

  Hence $D$ is the set of words $w \in X^{\leq\omega}$ such that
  $\pref (w) \subseteq P$ and $\suf (w) \subseteq F$, that is, such
  that all the finite prefixes of $w$ are in $P$ and the letters $w_n$
  are in $F$ for $n \in \dom w$ large enough.  There may be different
  choices of the pair $(P, F)$, and we choose one such that the number
  $N$ of atoms in $P$ is minimal, and such that given that $N$, $F$ is
  minimal with respect to inclusion.  This is possible since $X$ is
  Noetherian.

  If $F$ is empty, then $D$ is the set of finite words $w$ such that
  $\pref (w) \subseteq P$, or equivalently such that every prefix of
  $w$ is in $P$.  Therefore $D = P$, and that can also be written as
  $P F^{\leq\omega}$, since $F = \emptyset$.  Henceforth let us assume
  that $F$ is non-empty.

  We note that: $(*)$ $D$ is not included in $X^*$.  Indeed,
  otherwise, for every $w \in D$, $w$ would be finite, so $\suf (w)$
  would be empty.  It would follow that $D$ would be included in
  $f^{-1} (\dc (P, \emptyset))$, and that would contradict the
  minimality of $F$.

  Let us write $P$ as a product $A_1 A_2 \cdots A_N$ of atoms, with
  $N$ minimal.  Since $\emptyset^* = \{\epsilon\}$, we may simply
  erase all atoms of the form $A^*$ with $A$ empty: since $N$ is
  minimal, no $A_i$ is of the form $\emptyset^*$.  We claim that
  $N \geq 1$ and that $A_N$ is of the form ${F'}^*$ for some
  (necessarily non-empty) closed set $F'$.  We cannot have $N=0$,
  since that would imply that for every $w \in D$,
  $\pref (w) = \{\epsilon\}$, hence that $D = \{\epsilon\}$; this is
  impossible since $\suf (\{\epsilon\}) = \emptyset$, contradicting
  the fact that $F$ is non-empty.  Hence let us write $P$ as $P' A_N$.
  We now assume that $A_N$ is of the form $C^?$ with $C$ irreducible
  closed, and we aim for a contradiction.  For every infinite word
  $w \in D$, we note that $\pref (w)$ is included in $P'$: for every
  finite prefix $w_{<n}$ of $w$, the finite prefix
  $w_{<n+1} = w_{<n} w_n$ is in $\pref (w)$, hence in $P = P' C^?$,
  and that implies that $w_{<n}$ is in $P'$; since $n$ is arbitrary,
  $\{w_{<n} \mid n \in \dom w\}$ is included in $P'$, and therefore
  $\pref (w) \subseteq P'$, since $P'$ is closed.  Hence every
  infinite word in $D$ is included in $\pref^{-1} (P')$.
  Alternatively, $D$ is included in the union of the set $X^*$ of
  finite words and of $\pref^{-1} (P')$.  Since $D$ is irreducible,
  and both $X^*$ and $\pref^{-1} (P')$ are closed (the latter by
  Lemma~\ref{lemma:word:prefix:limsup}), $D$ must be included in one
  of them.  By $(*)$, $D$ is not included in $X^*$, so $D$ is included
  in $\pref^{-1} (P')$.  It follows that $D$ is the set of words
  $w \in X^{\leq \omega}$ such that $\pref (w) \subseteq P'$ (not just
  $P$) and $\suf (w) \subseteq F$, and this contradicts the minimality
  of $N$.

  We have shown that $P$ is of the form $P' {F'}^*$ for some non-empty
  closed subset $F'$ of $X$.  We now claim that $F$ must be included
  in $F'$.  For every infinite word $w$ in $D$,
  $\pref (w) \subseteq P$, so every finite prefix of $w$ is in $P$.
  Then, either every finite prefix of $w$ is in $P'$---in which case
  $\pref (w) \subseteq P'$---or there is a largest $n_0 \in \nat$ such
  that $w_{<n_0}$ is in $P'$.  In the latter case, every letter $w_n$
  with $n \geq n_0$ must be in $F'$.  For $n_1$ large enough,
  $\suf (w) = cl (\{w_m \mid m \geq n_1\}$.  By picking $n_1$ larger
  than $n_0$, every letter $w_n$ with $n \geq n_1$ is also in $F$.
  Hence $w_n$ is in $F \cap F'$ for every $n \geq n_1$, from which we
  deduce that $\suf (w) \subseteq F \cap F'$.  We have shown that
  every infinite word $w$ in $D$ is in
  $\pref^{-1} (\dc P') \cup \suf^{-1} (\dc (F \cap F'))$, or
  equivalently, that $D$ is included in the union of $X^*$,
  $\pref^{-1} (\dc P')$, and $\suf^{-1} (\dc (F \cap F'))$.  Those
  three sets are closed, using Lemma~\ref{lemma:word:prefix:limsup} in
  the case of the second one, and Lemma~\ref{lemma:word:suffix:limsup}
  for the third one.  Since $D$ is irreducible, it must be included in
  one of them.  It is not included in $X^*$ by $(*)$.  It is not
  included in $\pref^{-1} (\dc P')$, otherwise $D$ would be included
  in $f^{-1} (\dc (P', F))$, contradicting the minimality of $N$.
  Therefore $D$ is included in $\suf^{-1} (\dc (F \cap F'))$.  This
  entails that $D$ is included in $f^{-1} (\dc (P, F \cap F'))$.
  Since $F$ is minimal, $F = F \cap F'$, hence $F$ is included in
  $F'$.

  Now that we know that $D = f^{-1} (\dc (P' {F'}^*, F))$ with
  $F \subseteq F'$, we verify that $D = P' {F'}^* F^{\leq\omega}$.
  For every $w \in D$, $\suf (w) \subseteq F$ so there is an index
  $n_0$ such that, for every $n \in \dom w$ with $n \geq n_0$, $w_n$
  is in $F$.  If $w$ is a finite word, we may take $n_0$ equal to one
  plus the length of $w$.  Whatever the case, $w_{\geq n}$ is in
  $F^{\leq\omega}$.  Since $\pref (w) \subseteq P' {F'}^*$, $w_{<n}$
  is in $P' {F'}^*$, so $w$ is in $P' {F'}^* F^{\leq\omega}$.
  Conversely, let $w \in P' {F'}^* F^{\leq\omega}$.  If $w$ is finite,
  then it is in $P' {F'}^* F^* \subseteq P' {F'}^*$ (since
  $F \subseteq F'$), so $\pref (w) \subseteq P' {F'}^*$; also,
  $\suf (w) = \emptyset \subseteq F$, so $w$ is in
  $f^{-1} (\dc (P' {F'}^*, F)) = D$.  Hence let us assume that $w$ is
  infinite.  There is an $n_0 \in \nat$ such that
  $w_{<n_0} \in P' {F'}^*$ and $w_n \in F$ for every $n \geq n_0$.  In
  particular, $\suf (w)$ is included in
  $cl (\{w_n \mid n \in \dom w, n \geq n_0\})$, hence in $F$.  For
  every finite prefix $w_{<n}$ of $w$, either $n \leq n_0$, in which
  case $w_{<n}$ is a subword of $w_{<n_0}$ hence is in $P' {F'}^*$, or
  $n > n_0$, in which case we can write $w_{<n}$ as $w_{<n_0}w'$ where
  $w' \in {F}^*$.  Since $F \subseteq F'$, $w_{<n}$ is then again in
  $P' {F'}^*$.  It follows that $\pref (w) \subseteq P' {F'}^*$, hence
  that $f (w) \in \dc (P' {F'}^*, F)$, so $w$ is in $D$.
\end{proof}
The last part of the previous proof shows the useful fact that for
every word product $P$, for all closed subsets $F$ and $F'$ of $X$
such that $F \subseteq F'$,
$P {F'}^* F^{\leq\omega} = \langle \pref, \suf\rangle^{-1} (\dc (P
{F'}^*, F))$.  When $F=F'$, and since
$PF^*F^{\leq\omega} = PF^{\leq\omega}$ (resp.,
$PF^\omega = PF^{\leq\omega} \cap X^\omega$), we obtain:
\begin{fact}
  \label{fact:product}
  Let $X$ be a Noetherian space.  For every word product $P$, for
  every closed subset $F$ of $X$,
  $P F^{\leq\omega} = \langle \pref, \suf\rangle^{-1} (\dc (P F^*,
  F))$.  In particular, every finite-or-infinite word product
  $P F^{\leq\omega}$ is closed in $X^{\leq\omega}$ (resp., every
  infinite word product $PF^\omega$ is closed in $X^\omega$).  \qed
\end{fact}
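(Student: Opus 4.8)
The plan is to read the statement off from the computation carried out at the end of the proof of Proposition~\ref{prop:irred=>prod}, where it was shown that for every word product $P$ and all closed subsets $F \subseteq F'$ of $X$ one has $P {F'}^* F^{\leq\omega} = \langle \pref, \suf\rangle^{-1} (\dc (P {F'}^*, F))$. First I would instantiate this with $F' \eqdef F$, obtaining $P F^* F^{\leq\omega} = \langle \pref, \suf\rangle^{-1} (\dc (P F^*, F))$. Then I would simplify the left-hand side: every word of $F^*$ is in particular a finite word of $F^{\leq\omega}$, and the concatenation of a finite word over $F$ with a finite-or-infinite word over $F$ is again a finite-or-infinite word over $F$, so $F^* F^{\leq\omega} = F^{\leq\omega}$ and hence $P F^* F^{\leq\omega} = P F^{\leq\omega}$. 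This yields the displayed equality.

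For the ``in particular'' claim, I would use that $\dc (P F^*, F)$ is the closure of the singleton $\{(P F^*, F)\}$ in $\Sober (X^*) \times \Hoare (X)$ (the closure of a point always being its principal ideal, and the specialization order of this product being the product of the two inclusion orderings), hence a closed subset of that product. Since $\langle \pref, \suf \rangle \colon X^{\leq\omega} \to \Sober (X^*) \times \Hoare (X)$ is continuous --- it is even initial, by Proposition~\ref{prop:word:limsup:join} --- its inverse image $P F^{\leq\omega}$ is closed in $X^{\leq\omega}$. For the parenthetical variant, $X^\omega$ is a subspace of $X^{\leq\omega}$ (an open one, since $X^\omega = \langle \infy X \rangle$), so $P F^\omega = P F^{\leq\omega} \cap X^\omega$ is closed in $X^\omega$ as the trace of a closed set.

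There is no real obstacle here; the two points deserving a word of care are that $(P F^*, F)$ is a legitimate point of $\Sober (X^*) \times \Hoare (X)$ and that the degenerate case $F = \emptyset$ is harmless. For the former, appending the atom $F^*$ to the word product $P = A_1 A_2 \cdots A_N$ produces the word product $A_1 A_2 \cdots A_N F^*$, which is irreducible closed in $X^*$ by the characterization of $\Sober (X^*)$ recalled in Section~\ref{sec:preliminaries}, while $F$ is a closed subset of $X$, i.e., an element of $\Hoare (X)$. For the latter, when $F = \emptyset$ one has $F^* = F^{\leq\omega} = \{\epsilon\}$, so both sides of the asserted equality collapse to $P$ (viewed inside $X^*$), in accordance with the empty-$F$ discussion already present in the proof of Proposition~\ref{prop:irred=>prod}. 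In short, once the two inclusions behind $P {F'}^* F^{\leq\omega} = \langle \pref, \suf\rangle^{-1} (\dc (P {F'}^*, F))$ are granted, the Fact is a one-line specialization followed by these routine remarks.
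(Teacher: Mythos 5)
Your proof is correct and follows essentially the same route as the paper, which likewise obtains this Fact by specializing the identity $P{F'}^*F^{\leq\omega} = \langle \pref,\suf\rangle^{-1}(\dc(P{F'}^*,F))$ established at the end of the proof of Proposition~\ref{prop:irred=>prod} to $F'=F$ and simplifying $PF^*F^{\leq\omega}=PF^{\leq\omega}$. The closedness argument you spell out (continuity of $\langle \pref,\suf\rangle$ together with $\dc(PF^*,F)$ being the closure of a point, hence closed) is exactly what the paper leaves implicit, and your checks of the degenerate case $F=\emptyset$ and of $(PF^*,F)$ being a legitimate point of $\Sober(X^*)\times\Hoare(X)$ are sound.
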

Recall that we have required the closed set $F$ to be non-empty in
infinite products $PF^\omega$.  This is unimportant in
Fact~\ref{fact:product}, since $P \emptyset^\omega = \emptyset$ is
closed anyway, but it matters in the following.

\begin{thm}
  \label{thm:prod=irred}
  Let $X$ be a Noetherian space.  The irreducible closed subsets of
  $X^{\leq\omega}$ (resp., $X^\omega$) are the finite-or-infinite word
  products (resp., the infinite word products).
\end{thm}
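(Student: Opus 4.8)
The plan is to combine Proposition~\ref{prop:irred=>prod} with a converse showing that every finite-or-infinite word product (resp., every infinite word product) is irreducible closed in $X^{\leq\omega}$ (resp., $X^\omega$). Proposition~\ref{prop:irred=>prod} already gives one inclusion: every irreducible closed subset of $X^{\leq\omega}$ is a finite-or-infinite word product, and restricting to $X^\omega$ (an open, hence also closed-complement... actually open) subspace, every irreducible closed subset of $X^\omega$ is of the form $D \cap X^\omega$ for $D$ irreducible closed in $X^{\leq\omega}$; writing $D = PF^{\leq\omega}$, we get $D \cap X^\omega = PF^\omega$, and this is empty exactly when $F$ is empty, so the nonempty ones are precisely the infinite word products. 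So the real work is the converse direction.

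For the converse, the key tool is Fact~\ref{fact:product}, which says $PF^{\leq\omega} = \langle\pref,\suf\rangle^{-1}(\dc(PF^*,F))$. I would first argue that $\dc(PF^*, F)$ is an irreducible closed subset of $\Sober(X^*) \times \Hoare(X)$: it is the downward closure of the single point $(PF^*, F)$ (here $PF^*$ is a word product hence an element of $\Sober(X^*)$, and $F \in \Hoare(X)$), and the closure of a point is always irreducible closed. Then I would apply Lemma~\ref{lemma:irred:f-1:clop} to the initial map $f \eqdef \langle\pref,\suf\rangle$. To use part (1) of that lemma I would need the image of $f$ to be open, or to use part (2) I would need it to be closed — neither is obviously true, so this is where I expect the main difficulty.

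The way around this is to observe that I do not need a general statement, only that the specific inverse image $f^{-1}(\dc(PF^*,F))$ is irreducible. I would verify irreducibility directly: $PF^{\leq\omega}$ is nonempty (it contains, e.g., any word in $PF^*$, or if $F \neq \emptyset$ an infinite word in $PF^\omega$), and given two basic open sets (using the base from Lemma~\ref{lemma:limsup:base}, of the form $\langle U_1;\cdots;U_k; \infy{V_1}\cap\cdots\cap\infy{V_\ell}\rangle$) that each meet $PF^{\leq\omega}$, I would construct a single word $w \in PF^{\leq\omega}$ lying in both. Concretely, if one basic set forces a subword pattern $U_1\cdots U_k$ realizable within $P$ followed by suffix conditions realizable because each $V_i$ meets $F$, and the other forces a pattern $U'_1\cdots U'_{k'}$ with its own suffix conditions, then because $P$ is a word product and prefixes of words in $PF^{\leq\omega}$ realize "more" as we go right, I can find a word whose prefix realizes both finite patterns (interleaved/concatenated appropriately since the word topology's basic opens are upward-closed and realizing one pattern in a longer prefix does not destroy the other) and whose infinite tail, drawn entirely from $F$, simultaneously hits each $V_i$ and each $V'_j$ infinitely often (possible since $F \cap V_i \neq \emptyset$ for all relevant $i$, and likewise for the primed conditions — one cycles through a finite list of witnesses). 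The case analysis on whether $\ell = 0$ or $\ell \geq 1$ (finite vs. infinite word) and whether $F = \emptyset$ needs care, but is routine. Alternatively, and more cleanly, I would note that since $\Sober(X^*) \times \Hoare(X)$ is sober and $f$ is initial, one can push the irreducibility of $\dc(PF^*,F)$ through: if $f^{-1}(\dc(PF^*,F)) \subseteq C_1 \cup C_2$ with $C_i$ closed in $X^{\leq\omega}$, then since $f$ is initial, surjective onto its image... — here I would need to relate closed subsets of the domain to closed subsets of $\cl(\Img f)$, which is precisely the gap, so I would fall back on the direct verification above. I expect that the direct construction of the common witness word is the crux and the only genuinely non-formal part of the argument.
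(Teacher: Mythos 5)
Your proof is correct, but the key step---irreducibility of $PF^{\leq\omega}$---is handled by a genuinely different argument than the paper's. The paper first observes that $F^{\leq\omega}$ is \emph{directed} (any two of its elements have an upper bound in $F^{\leq\omega}$: concatenate them if one is finite, interleave them one-for-one if both are infinite), hence irreducible; then $P\times F^{\leq\omega}$ is irreducible in $X^*\times X^{\leq\omega}$, and since $cat$ is continuous (Proposition~\ref{prop:cat:cont}) and $cat[P\times F^{\leq\omega}]=PF^{\leq\omega}$ is already closed by Fact~\ref{fact:product}, the set $cl(cat[P\times F^{\leq\omega}])=PF^{\leq\omega}$ is irreducible closed. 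This completely sidesteps the two dead ends you correctly flag (the image of $\langle\pref,\suf\rangle$ being neither obviously open nor closed, and the case analysis in a hands-on verification). Your fallback---checking irreducibility directly against the base of Lemma~\ref{lemma:limsup:base}---does go through: a word of $PF^{\leq\omega}$ in $\langle U_1;\cdots;U_k;\infy{V_1}\cap\cdots\cap\infy{V_\ell}\rangle$ has a finite prefix in $PF^*\cap\langle U_1;\cdots;U_k\rangle$ and forces $F\cap V_i\neq\emptyset$ for each $i$; since $PF^*$ is a word product, hence irreducible closed in $X^*$, a single finite word of $PF^*$ realizes both finite patterns, and appending a tail in $F^{\leq\omega}$ that cycles through witnesses of the various $F\cap V_i$ and $F\cap V'_j$ produces the common witness. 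Be explicit, if you write this up, that the merging of the two finite patterns rests on the irreducibility of $PF^*$ in $X^*$ (not merely on upward-closedness of basic opens), and that the witnesses of a pattern may straddle the $P$-part and the $F$-tail, which is why one should work with $PF^*$ rather than $P$. The trade-off is clear: the paper's route is shorter and reuses general machinery (irreducibility of products of irreducibles, images under continuous maps), while yours is more elementary and self-contained at the price of a careful combinatorial construction. Your treatment of the $X^\omega$ case, via the open embedding into $X^{\leq\omega}$ and Lemma~\ref{lemma:irred:f-1:clop}, item~1, is exactly the paper's.
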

\begin{proof}
  We deal with $X^{\leq\omega}$ first.  Considering
  Proposition~\ref{prop:irred=>prod} and Fact~\ref{fact:product}, it
  remains to show that every finite-or-infinite word product
  $P F^{\leq\omega}$ is irreducible (where $P$ is a word product, and
  $F$ is closed in $X$).

  We start by showing that $F^{\leq\omega}$ is irreducible in
  $X^{\leq\omega}$.  It is more, namely it is \emph{directed}, with
  respect to the quasi-ordering $\leq^\omega$.  In other words, we
  show that $F^{\leq\omega}$ is is non-empty (which is clear, since it
  contains the empty word $\epsilon$), and that any two elements $w_1$
  and $w_2$ of $F^{\leq\omega}$ have an upper bound in
  $F^{\leq\omega}$ with respect to $\leq^\omega$.  For that upper
  bound, we can simply take: the concatenation $w_1 w_2$ if $w_1$ is
  finite (or $w_2 w_1$ if $w_2$ is finite), and the one-for-one
  interleaving of $w_1$ and $w_2$ if both are infinite (i.e., the
  letters at even positions are those from $w_1$, the letters at odd
  positions are those from $w_2$).  Every set that is directed with
  respect to $\leq^\omega$ is irreducible: if it intersects two open
  sets $U_1$ (say at $w_1$) and $U_2$ (say at $w_2$) then it
  intersects $U_1 \cap U_2$ (at the chosen upper bound of $w_1$ and
  $w_2$ in the directed set), using Fact~\ref{fact:word:limpsup:dc}.

  Given any word product $P$, we know that $P$ is irreducible closed
  in $X^*$, so $P \times F^{\leq\omega}$ is irreducible closed in
  $X^* \times X^{\leq\omega}$.  Then, using the fact that $cat$ is
  continuous (Proposition~\ref{prop:cat:cont}),
  $cl (cat [P \times F^{\leq\omega}])$ is irreducible closed.  (Recall
  from Section~\ref{sec:preliminaries} that
  $cl (f [C]) = \Sober (f) (C)$ is irreducible closed for every
  irreducible closed subset $C$ and every continuous map $f$.)
  Evidently, $cat [P \times F^{\leq\omega}] = P F^{\leq\omega}$, and
  $cl (P F^{\leq\omega}) = P F^{\leq\omega}$ by
  Fact~\ref{fact:product}, so $P F^{\leq\omega}$ is irreducible closed
  in $X^{\leq\omega}$.

  In the case of $X^\omega$, the inclusion map $i$ of $X^\omega$ into
  $X^{\leq\omega}$ is a topological embedding (hence initial), and its
  image $X^\omega = \langle \infy X \rangle$ is open.  By
  Lemma~\ref{lemma:irred:f-1:clop}, item~1, the irreducible closed
  subsets of $X^\omega$ are the sets of the form
  $i^{-1} (P F^{\leq\omega}) = P F^{\leq\omega} \cap X^\omega$ ($P$
  word product, $F$ closed) such that $P F^{\leq\omega}$ intersects
  $X^\omega$.  Those are the sets of the form $P F^\omega$ where,
  additionally, $F$ is non-empty.
\end{proof}

\subsection{The specialization preordering on \texorpdfstring{$X^{\leq\omega}$}{X≤ω} and Property (B)}
\label{sec:spec-preord-xomega}

Our first step in characterizing the specialization preordering on
$X^{\leq \omega}$ is to characterize the downward closure,
equivalently, the closure, of its points.
\begin{lemma}
  \label{lemma:dc}
  Let $X$ be a Noetherian space.  Given a fixed word
  $w \in X^{\leq\omega}$, let $n_0$ be such that
  $\suf (w) = cl (\{w_m \mid m \in \dom w, m \geq n\})$ for every
  $n \geq n_0$.  For every $n \geq n_0$:
  \begin{enumerate}
  \item the closure of $w$ in $X^{\leq\omega}$ is
    $(\dc w_0)^? (\dc w_1)^? \cdots (\dc w_{n-1})^? (\suf
    (w))^{\leq\omega}$;
  \item if $w \in X^\omega$, then its closure in $X^\omega$ is
    $(\dc w_0)^? (\dc w_1)^? \cdots (\dc w_{n-1})^? (\suf
    (w))^\omega$.
  \end{enumerate}
\end{lemma}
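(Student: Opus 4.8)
The plan is to compute the closure of a single word $w$ by relating it back to the initial map $f \eqdef \langle \pref, \suf \rangle$ and to the already-established Theorem~\ref{thm:prod=irred}. First I would observe that $cl(\{w\})$ is an irreducible closed subset of $X^{\leq\omega}$ (closures of points always are), so by Theorem~\ref{thm:prod=irred} it is a finite-or-infinite word product $P F^{\leq\omega}$; moreover, since $f$ is initial, Lemma~\ref{lemma:irred:f-1} tells us $cl(\{w\}) = f^{-1}(C)$ where $C \eqdef cl(f[\{w\}]) = cl(\{(\pref(w),\suf(w))\}) = \dc(\pref(w),\suf(w))$ in the sober space $\Sober(X^*)\times\Hoare(X)$ (using that $\Sober(X^*)$ and $\Hoare(X)$ are sober, the latter by Lemma~\ref{lemma:H:sobr}). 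So $cl(\{w\})$ is exactly the set of $w' \in X^{\leq\omega}$ with $\pref(w') \subseteq \pref(w)$ and $\suf(w') \subseteq \suf(w)$.

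Next I would pin down $\pref(w)$ as an explicit word product. By the hypothesis on $n_0$, and by the reasoning in Remark~\ref{rem:pref:PF*}, we have $\pref(w) = (\dc w_0)^? (\dc w_1)^? \cdots (\dc w_{n_0-1})^? (\suf(w))^*$: indeed, every finite prefix $w_{<n}$ decomposes as $w_{<n_0}$ (a subword of which lies in $(\dc w_0)^?\cdots(\dc w_{n_0-1})^?$, appropriately truncated if $n < n_0$) followed by a block of letters $w_{n_0}\cdots w_{n-1}$, each of which lies in $cl(\{w_m \mid m \geq n_0\}) = \suf(w)$; conversely each atom of that word product is hit by some prefix, and closure under $\leq^*$-upward moves plus the irreducibility argument already given before Definition~\ref{defn:pref} shows the closure of the prefix set is exactly this product. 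Thus $P \eqdef (\dc w_0)^? \cdots (\dc w_{n_0-1})^? (\suf(w))^*$ and $F \eqdef \suf(w)$, with $F \subseteq F'$ where $F' \eqdef \suf(w)$ (here $F = F'$), so the ``useful fact'' recorded after Proposition~\ref{prop:irred=>prod} — namely $P{F'}^*F^{\leq\omega} = f^{-1}(\dc(PF^*,F))$ with $PF^*F^{\leq\omega} = PF^{\leq\omega}$ — gives directly that $cl(\{w\}) = f^{-1}(\dc(\pref(w),\suf(w))) = (\dc w_0)^?\cdots(\dc w_{n_0-1})^?(\suf(w))^{\leq\omega}$, which is item~(1).

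For item~(2), I would use that $X^\omega$ is an open subspace of $X^{\leq\omega}$ (Theorem~\ref{thm:limsup:noeth}'s setup, or rather the observation $X^\omega = \langle\infy X\rangle$) with inclusion map $i$ a topological embedding, so the closure of $\{w\}$ in $X^\omega$ is $cl_{X^{\leq\omega}}(\{w\}) \cap X^\omega$. Intersecting the word product from item~(1) with $X^\omega$ simply replaces the trailing $(\suf(w))^{\leq\omega}$ by $(\suf(w))^\omega$ — one checks that a word matching $(\dc w_0)^?\cdots(\dc w_{n_0-1})^?(\suf(w))^{\leq\omega}$ is infinite iff its final block is, i.e.\ iff it lies in $(\dc w_0)^?\cdots(\dc w_{n_0-1})^?(\suf(w))^\omega$ (here $\suf(w)$ is automatically non-empty, since $w \in X^\omega$ and $X$ is Noetherian, so this is a genuine infinite word product).

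The main obstacle I anticipate is the explicit identification of $\pref(w)$ with the stated word product: one must argue carefully that the closure of the set of finite prefixes $\{w_{<n} \mid n \in \nat\}$ is \emph{exactly} $(\dc w_0)^?\cdots(\dc w_{n_0-1})^?(\suf(w))^*$ and not something larger or smaller. The inclusion of the prefix set in the product is the bookkeeping described above (split at $n_0$); the reverse inclusion — that the product lies in the closure — needs that every atom is ``reached'': $(\dc w_i)^?$ is reached because $w_{<i+1} \in cl(\{w_{<n}\})$ and passing to the $\leq^*$-downward closure is harmless, and the $\suf(w)$ block is reached because $\suf(w) = cl(\{w_m \mid m \geq n_0\})$ together with the fact that word products are closed under the operations generating the closure (Proposition~\ref{prop:srepr:X*} / the structure of $\Sober(X^*)$). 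Alternatively, and more cleanly, one can avoid computing $\pref(w)$ outright: simply \emph{guess} $P \eqdef (\dc w_0)^?\cdots(\dc w_{n_0-1})^?(\suf(w))^*$ and $F \eqdef \suf(w)$, verify $\pref(w) \subseteq P$ and $\suf(w) \subseteq F$ (both straightforward from the definition of $n_0$), hence $w \in PF^{\leq\omega} = f^{-1}(\dc(PF^*,F))$ by Fact~\ref{fact:product}, so $cl(\{w\}) \subseteq PF^{\leq\omega}$; and conversely $PF^{\leq\omega}$ is irreducible closed (Theorem~\ref{thm:prod=irred}) and contains $w$, but any irreducible closed set containing $w$ contains $cl(\{w\})$ — and to get equality one shows $PF^{\leq\omega}$ is the \emph{smallest} such, equivalently that $\pref(w) = P$ and $\suf(w) = F$ on the nose, which circles back to the same bookkeeping. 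I would present the direct route via the ``useful fact'' after Proposition~\ref{prop:irred=>prod}, since it packages exactly this equality.
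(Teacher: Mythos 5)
Your overall architecture is sound and uses the same tools as the paper (Theorem~\ref{thm:prod=irred}, Fact~\ref{fact:product}, initiality of $\langle\pref,\suf\rangle$, and the identification $cl(\{w\}) = \langle\pref,\suf\rangle^{-1}(\dc(\pref(w),\suf(w)))$), but your main route hinges on the exact identification $\pref(w) = (\dc w_0)^? \cdots (\dc w_{n_0-1})^? (\suf(w))^*$, i.e.\ on Remark~\ref{rem:pref:PF*}, which the paper states without proof and explicitly says it will not use. The easy inclusion ($\pref(w)$ is contained in that product) is indeed bookkeeping, but the hard inclusion --- that every word $x_{i_1}\cdots x_{i_k}\,y_1\cdots y_l$ with $i_1 < \cdots < i_k < n_0$, $x_{i_j} \leq w_{i_j}$ and $y_j \in \suf(w)$ lies in $cl(\{w_{<n} \mid n \in \dom w\})$ --- is where the content sits, and your justification (``every atom is reached'', ``word products are closed under the operations generating the closure'') does not establish it. One must take an arbitrary basic open set $\langle U_1; \cdots; U_m\rangle$ containing such a word and exhibit a single prefix of $w$ meeting $U_1, \ldots, U_m$ at increasing positions; for the letters $y_j$ this uses that $\suf(w) = cl(\{w_p \mid p \in \dom w,\ p \geq N\})$ for \emph{every} $N \geq n_0$, so that witnesses can be chosen at arbitrarily large, hence strictly increasing, positions. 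The claim is true and your route is completable, but as written this key step is missing.

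The irony is that the ``alternative'' you dismiss as circling back to the same bookkeeping is essentially the paper's proof, and it does \emph{not} require computing $\pref(w)$ exactly. Write $cl(\{w\}) = P'{F'}^{\leq\omega} = \langle\pref,\suf\rangle^{-1}(\dc(P'{F'}^*, F'))$ by Theorem~\ref{thm:prod=irred} and Fact~\ref{fact:product}; since $w$ belongs to it, $\suf(w) \subseteq F'$ and $\pref(w) \subseteq P'{F'}^*$. Now only the \emph{trivial} inclusion $(\dc w_0)^?\cdots(\dc w_{n_0-1})^? \subseteq \pref(w)$ is needed (its elements are subwords of $w_{<n_0}$, and $\pref(w)$ is downwards closed): it yields $(\dc w_0)^?\cdots(\dc w_{n_0-1})^?(\suf(w))^* \subseteq P'{F'}^*(\suf(w))^* \subseteq P'{F'}^*{F'}^* = P'{F'}^*$, whence $(\dc w_0)^?\cdots(\dc w_{n_0-1})^?(\suf(w))^{\leq\omega} \subseteq cl(\{w\})$ by Fact~\ref{fact:product} again. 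The $(\suf(w))^*$ block is absorbed into ${F'}^*$ rather than matched against $\pref(w)$, so no exact computation of $\pref(w)$ is required. Your treatment of item~(2), by intersecting with the open subspace $X^\omega$, matches the paper and is fine.
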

\begin{proof}
  Let $P \eqdef (\dc w_0)^? (\dc w_1)^? \cdots (\dc w_{n-1})^?$,
  $F \eqdef \suf (w)$, and $C \eqdef P F^{\leq\omega}$.
  
  1.  $C$ certainly contains $w$, and is closed by
  Fact~\ref{fact:product}, so $cl (\{w\}) \subseteq C$.  (We write
  $cl$ for closure in $X^{\leq\omega}$ here.)
  
  We turn to the converse implication.  Since $cl (\{w\})$ is
  irreducible closed (the closures of points are always irreducible
  closed), it must be a finite-or-infinite word product
  $P' {F'}^{\leq\omega}$ by Theorem~\ref{thm:prod=irred}.
  $P' {F'}^{\leq\omega}$ is equal to
  $\langle \pref, \suf \rangle^{-1} (\dc (P' {F'}^*, F'))$ by
  Fact~\ref{fact:product}.  Since $w$ is in its closure
  $P' {F'}^{\leq\omega}$, $\pref (w)$ is included in $P' {F'}^*$ and
  $\suf (w)$ is included in $F'$.  The latter means that
  $F \subseteq F'$.  Using the former, we claim that
  $P F^* \subseteq P' {F'}^*$.  Since $F \subseteq F'$, it is
  equivalent to show that $P \subseteq P' {F'}^*$.  That is obvious,
  since every element of $P$ is a subword of $w_{<n}$, hence a
  subword of a finite prefix of $w$, hence belongs to $\pref (w)$,
  which is included in $P' {F'}^*$.

  Now that $P F^* \subseteq P' {F'}^*$ and $F \subseteq F'$,
  $C = \langle \pref, \suf \rangle^{-1} (\dc (P F^*, F))$
  (Fact~\ref{fact:product}) is included in
  $\langle \pref, \suf \rangle^{-1} (\dc (P' {F'}^*, F')) = cl
  (\{w\})$.

  2. The closure of $w$ in $X^\omega$ is
  $cl (\{w\}) \cap X^\omega = P F^{\leq\omega} \cap X^\omega = P
  F^\omega$.
\end{proof}

Lemma~\ref{lemma:dc} yields a description of the specialization
preordering of $X^{\leq\omega}$ and of $X^\omega$, since $w'$ is below
$w$ in that ordering if and only if $w'$ is in the closure of $w$.
That is far from explicit.

We can improve on that situation when $X$ is a wqo, obtaining an
analogue of Property (B) for $X^{\leq\omega}$ and $X^\omega$.

\begin{lemma}
  \label{lemma:suf:c}
  If $X$ is a wqo, then for every $w \in X^{\leq\omega}$, $\suf (w)$
  is the set of letters that are below infinitely many letters from
  $w$, and is equal to $\bigcup_{m \in \dom w, m \geq n} \dc w_m$ for
  $n$ large enough.
\end{lemma}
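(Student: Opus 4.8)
The plan is to unwind the definition of $\suf$ using the hypothesis that $X$ is a wqo. Recall that this means $X$ is Noetherian \emph{and} its topology is the Alexandroff topology of its specialization quasi-ordering $\leq$; in particular, the closure of any subset $A$ of $X$ is just its downward closure $\dc A = \bigcup_{x \in A} \dc x$. Applying this to $A = \{w_m \mid m \in \dom w,\ m \geq n\}$, the definition $\suf(w) = \bigcap_{n} cl(\{w_m \mid m \in \dom w,\ m \geq n\})$ rewrites as
\[
  \suf(w) \;=\; \bigcap_{n \in \nat} \;\; \bigcup_{m \in \dom w,\ m \geq n} \dc w_m .
\]

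From this formula I would read off the ``infinitely many letters'' description directly: a letter $x$ lies in $\suf(w)$ if and only if for every $n \in \nat$ there is some $m \in \dom w$ with $m \geq n$ and $x \leq w_m$, i.e.\ if and only if $\{m \in \dom w \mid x \leq w_m\}$ is unbounded in $\nat$, which for a subset of $\nat$ is the same as being infinite. This settles the first assertion; it also makes the finite-word case transparent, since when $w \in X^*$ no letter can be below infinitely many letters of $w$, so $\suf(w) = \emptyset$, in agreement with the earlier remark.

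For the ``equal to $\bigcup_{m \in \dom w,\ m \geq n} \dc w_m$ for $n$ large enough'' clause, I would set $C_n \eqdef \bigcup_{m \in \dom w,\ m \geq n} \dc w_m$ and note that each $C_n$ is downward-closed, hence closed (Alexandroff topology), and that $C_0 \supseteq C_1 \supseteq \cdots$ is a descending chain of closed subsets of $X$. Since $X$ is Noetherian, this chain is stationary: there is an $n_0$ with $C_n = C_{n_0}$ for all $n \geq n_0$, whence $\suf(w) = \bigcap_n C_n = C_{n_0} = C_n$ for every $n \geq n_0$.

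I do not expect any genuine obstacle: the lemma is essentially an exercise in bookkeeping. The only points that need care are invoking the two consequences of $X$ being a wqo in the right places — Alexandroffness to replace closures by downward closures, and Noetherianness for the stationarity of $(C_n)_n$ — together with the elementary observation that an unbounded subset of $\nat$ is the same thing as an infinite subset of $\nat$.
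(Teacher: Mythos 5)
Your proposal is correct and follows essentially the same route as the paper: use Alexandroffness to replace $cl$ by $\dc$, read off the ``below infinitely many letters'' description from the resulting intersection, and invoke Noetherianness to make the descending chain of closed sets $\bigcup_{m \in \dom w,\ m \geq n} \dc w_m$ stationary. The only cosmetic difference is that the paper dispatches the finite-word case separately at the outset, whereas you derive it from the general formula.
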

\begin{proof}
  If $w$ is finite, then $\suf (w)$ is empty, and the claim is clear.
  Let us assume that $w$ is an infinite word.  Since $X$ is a wqo, for
  every $n \in \nat$, $cl (\{w_m \mid m \geq n\})$ is equal to
  $\dc \{w_m \mid m \geq n\} = \bigcup_{m \geq n} \dc w_m$.  Hence
  $\suf (w) = \bigcap_{n \in \nat} \bigcup_{m \geq n} \dc w_m$ is the
  set of letters that are below infinitely many letters from $w$.
  Since $\suf (w) = \dc \{w_m \mid m \geq n\}$ for $n$ large enough,
  it is also equal to $\bigcup_{m \geq n} \dc w_m$ for $n$ large
  enough.
\end{proof}

\begin{proposition}
  \label{prop:spec}
  If $X$ is a wqo, then the specialization preordering on
  $X^{\leq\omega}$ is the subword preordering $\leq^\omega$.
\end{proposition}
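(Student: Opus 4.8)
The statement asserts that on $X^{\leq\omega}$ the specialization preordering $\leq$ (for which $w' \leq w$ iff $w' \in cl(\{w\})$) coincides with the subword preordering $\leq^\omega$, and the plan is to prove the two inclusions separately. The inclusion $w' \leq^\omega w \Rightarrow w' \leq w$ is immediate: each subbasic open set $\langle U_1; \cdots; U_n \rangle$ and $\langle U_1; \cdots; U_n; \infy U \rangle$ of $X^{\leq\omega}$ is manifestly upwards-closed under $\leq^\omega$, hence so is every open set (this is the content of Fact~\ref{fact:word:limpsup:dc}, whose argument applies verbatim to $X^{\leq\omega}$); so if $w' \leq^\omega w$ then every open neighbourhood of $w'$ also contains $w$, which is precisely $w' \leq w$.

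For the converse, $w' \leq w \Rightarrow w' \leq^\omega w$, I would unfold $w' \leq w$ as $w' \in cl(\{w\})$ and feed in the explicit description of $cl(\{w\})$ from Lemma~\ref{lemma:dc}(1). Fix $n_0$ so that $\suf(w) = cl(\{w_m \mid m \in \dom w,\ m \geq n\})$ for every $n \geq n_0$; since $X$ is a wqo that closure is simply $\dc\{w_m \mid m \geq n\}$ (Lemma~\ref{lemma:suf:c}), and
\[
cl(\{w\}) = (\dc w_0)^? (\dc w_1)^? \cdots (\dc w_{n_0-1})^? (\suf(w))^{\leq\omega}.
\]
As a member of this word product, $w'$ factors as a concatenation $w' = u\,v$, where $u \in (\dc w_0)^? \cdots (\dc w_{n_0-1})^?$ is a finite word of length at most $n_0$ and $v \in (\suf(w))^{\leq\omega}$. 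The shape of the word product defining $u$ directly yields a strictly monotone map $g \colon \dom u \to \{0, \ldots, n_0-1\}$ with $u_t \leq w_{g(t)}$ for all $t \in \dom u$; that is, $u$ is a subword of the finite prefix $w_{<n_0}$.

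It remains to embed $v$ into the suffix $w_{\geq n_0}$, and this is the only step with genuine content. By Lemma~\ref{lemma:suf:c} every letter of $v$ lies in $\suf(w)$, hence is below $w_m$ for infinitely many positions $m$; so I would pick, greedily, a strictly increasing sequence of positions $p_0 < p_1 < \cdots$, all $\geq n_0$, with $v_i \leq w_{p_i}$: once $p_0, \ldots, p_{i-1}$ have been chosen there remain infinitely many $m$ with $v_i \leq w_m$, hence one with $m \geq \max(n_0,\, p_{i-1}+1)$ (if $v$ is finite the construction simply stops). Splicing $g$ with $i \mapsto p_i$ — whose ranges are disjoint, the first inside $\{0, \ldots, n_0-1\}$ and the second inside $\{n_0, n_0+1, \ldots\}$ — produces a strictly monotone injection $f \colon \dom w' \to \dom w$ with $w'_j \leq w_{f(j)}$ for every $j \in \dom w'$, i.e.\ $w' \leq^\omega w$.

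Combining the two inclusions gives the claimed equality $\leq\,=\,\leq^\omega$. The wqo hypothesis on $X$ is used in exactly one place, namely Lemma~\ref{lemma:suf:c}'s identification of $\suf(w)$ with the set of letters below infinitely many letters of $w$ (equivalently, that in a wqo closure coincides with downward closure); the only other point requiring care is matching up the two partial embeddings so that their position ranges do not overlap, which the choice $p_i \geq n_0$ takes care of.
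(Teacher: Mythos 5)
Your proposal is correct and follows essentially the same route as the paper: the easy direction via Fact~\ref{fact:word:limpsup:dc}, and the converse by combining Lemma~\ref{lemma:dc} with Lemma~\ref{lemma:suf:c} to write $w'=uv$ and then greedily embedding $v$ into $w_{\geq n_0}$ at strictly increasing positions. Your uniform treatment of the finite and infinite cases of $v$ (the greedy construction just stops when $v$ is finite) is a minor streamlining of the paper's case split, but the underlying argument is the same.
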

\begin{proof}
  Let us fix $w \in X^{\leq\omega}$.  It suffices to show that the
  closure of $w$ is exactly the set of finite-or-infinite subwords of
  $w$.  By Fact~\ref{fact:word:limpsup:dc}, every subword of $w$ is in
  the closure of $w$.  Conversely, let $w'$ be any element of the
  closure of $w$.  Using Lemma~\ref{lemma:dc} and
  Lemma~\ref{lemma:suf:c}, there is a natural number $n_0$ such that,
  for every $n \geq n_0$,
  $\suf (w) = \bigcup_{m \in \dom w, m \geq n} \dc w_m$, and then $w'$
  is in
  $(\dc w_0)^? (\dc w_1)^? \allowbreak \cdots (\dc w_{n-1})^? (\suf
  (w))^{\leq\omega}$.

  We first use this formula for $\suf (w)$ with $n \eqdef n_0$.  Then
  $w' = us$ where $u$ is in
  $(\dc w_0)^?  (\dc w_1)^? \cdots (\dc w_{n_0-1})^?$, hence is a
  subword of $w_{<n_0}$, and $s$ is in $(\suf (w))^{\leq\omega}$.  The
  latter means that $s$ is an (infinite) word whose letters are all in
  $\suf (w)$.  We claim that $s$ is a subword of $w_{\geq n_0}$,
  namely that there is a monotonic injective map
  $f \colon \dom s \to \dom w_{\geq n_0}$ such that
  $s_i \leq w_{\geq n_0} (f (i)) = w_{f (i)+n_0}$ for every
  $i \in \dom s$.  We define $f (i)$ by induction on $i \in \dom s$ as
  follows.  If $i=0$ is in $\dom s$, then $s_0$ is in $\suf (w)$.
  Using the formula for $\suf (w)$ with $n \eqdef n_0$, $s_0$ is in
  $\dc w_m$ for some $m \geq n_0$.  We define $f (0)$ as $m-n_0$.  For
  every non-zero $i \in \dom s$, and remembering that $f (i-1)$ is
  already defined by induction hypothesis, we use the formula for
  $\suf (w)$ with $n \eqdef f (i-1)+n_0+1$.  Then $s_{i+1}$ is in
  $\dc w_m$ for some $m > f (i-1)+n_0$, and we let $f (i)$ be $m-n_0$,
  so that $f (i) > f (i-1)$ and $s_i \leq w_{f (i)+n_0}$.

  This establishes that $s$ is a subword of $w_{\geq n_0}$.  It
  follows that $w' = us$ is a subword of $w_{<n_0} w_{\geq n_0} = w$.
\end{proof}

\subsection{Property (C)}
\label{sec:property-c}

We now investigate when $X^{\leq\omega}$ and $X^\omega$ are themselves
wqos.  In particular, this means when their topology is Alexandroff.
As with powersets, this is a different question from asking when
$\leq^\omega$ is a well-quasi-ordering on $X^\omega$, which is
equivalent to $\leq$ being an $\omega^2$-wqo.  (That equivalence is
the special case $\alpha=\omega^2$ of Theorem~2.8 of
\cite{Marcone:bqo}, paying attention that what Marcone calls
$\alpha$-wqo is what we call $\omega^\alpha$-wqo---some authors also
use the term $\omega^\alpha$-bqo.)

\begin{proposition}
  \label{prop:words:finite}
  If $X$ is essentially finite, then the asymptotic subword topology
  on $X^{\leq\omega}$ (resp., $X^\omega$) is the Alexandroff topology
  of $\leq^\omega$.
\end{proposition}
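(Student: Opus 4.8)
The plan is to show two things: first, that the asymptotic subword topology on $X^{\leq\omega}$ (resp.\ $X^\omega$) coincides with the Alexandroff topology of $\leq^\omega$ when $X$ is essentially finite; since by Fact~\ref{fact:word:limpsup:dc} every open set is already upwards-closed for $\leq^\omega$, the only thing to prove is that \emph{every} $\leq^\omega$-upwards-closed set is open. Equivalently, it suffices to show that for every $w \in X^{\leq\omega}$, the set $\upc w \eqdef \{w' \mid w \leq^\omega w'\}$ is open, since an arbitrary upwards-closed set is a union of such sets. So the core task is: \emph{produce, for each fixed $w$, a finite collection of subbasic open sets whose union is exactly $\upc w$.}

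First I would record what ``essentially finite'' buys us. By the remarks in Section~\ref{sec:preliminaries}, $X$ has only finitely many open subsets, hence only finitely many closed subsets, and its topology is Alexandroff; in particular $cl(A) = \dc A$ for every $A \subseteq X$, and $\upc x$ is open for every $x$. So $\suf(w) = \bigcap_n \dc\{w_m \mid m \geq n\}$, and since there are only finitely many closed sets, $\suf(w) = \dc\{w_m \mid m \geq n\}$ for $n$ large enough; fix such an $n_0$, which we may also take large enough that every letter $w_m$ with $m \geq n_0$ lies in $\suf(w)$ (possible because $\suf(w)$ is the set of letters dominated by infinitely many $w_m$, and only finitely many ``transient'' letters can occur). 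Let $P \eqdef w_{<n_0}$, a finite word, and $F \eqdef \suf(w)$. Then $w$ factors as $Pw_{\geq n_0}$ with all letters of $w_{\geq n_0}$ in $F$.

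Now I claim $\upc w$ is the set of $w' \in X^{\leq\omega}$ that contain $w_{<n_0} = w_0 w_1 \cdots w_{n_0-1}$ as a subword \emph{and}, in the infinite case, have infinitely many letters above each letter of $F$ occurring in $w_{\geq n_0}$ ``after'' that prefix. The key point making this work: because $X$ is essentially finite, $F = \dc E$ for a \emph{finite} set $E$ of letters, and up to $\equiv$-equivalence $w_{\geq n_0}$ uses each element of $E$ (or something $\equiv$ to it) infinitely often — otherwise $\suf$ would be strictly smaller. So $w \leq^\omega w'$ iff $w'$ has a subword matching $w_{<n_0}$ followed by a suffix that, for each $e \in E$, contains infinitely many letters $\geq e$. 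The ``$\Leftarrow$'' direction is the real content: given such a $w'$, one builds the embedding $f\colon \dom w \to \dom w'$ letter by letter, placing $w_{<n_0}$ into the matching prefix and then, processing $w_{\geq n_0}$ one letter at a time, always finding a later position in $w'$ above the current letter $w_m \in F = \dc E$ because $w'$ has infinitely many letters above each $e \in E$ (so infinitely many above $w_m$). This is precisely the ``greedy interleaving'' argument already used in the proof of Theorem~\ref{thm:prod=irred}. Translating the characterization into the subbasic notation: writing $w_{<n_0} = a_0 a_1 \cdots a_{n_0-1}$ and $E = \{e_1, \dots, e_k\}$, we get $\upc w = \langle \upc a_0; \upc a_1; \cdots; \upc a_{n_0-1}; \infy{(\upc e_1)} \cap \cdots \cap \infy{(\upc e_k)}\rangle$ in the notation of Lemma~\ref{lemma:limsup:base}, together with the finite-word part $\langle \upc a_0; \cdots; \upc a_{n_0-1}\rangle$ — a single basic open set (two in the $X^{\leq\omega}$ case, unioned), hence open. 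For $X^\omega$ one intersects with $\langle \infy X\rangle$, or simply notes $X^\omega$ is a subspace.

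The main obstacle is the bookkeeping around $n_0$ and the claim that $\suf(w)$ is exactly $\dc E$ with each $e \in E$ matched infinitely often by $w_{\geq n_0}$ — in particular verifying that no letter of $F$ is ``lost'' and that the greedy construction of the embedding never gets stuck. This hinges essentially on finiteness of the set of closed subsets of $X$: it guarantees $\suf$ stabilizes at some finite $n_0$ and that $F$ is the down-closure of a finite antichain, each element of which must recur infinitely often in $w_{\geq n_0}$. Once that is pinned down, matching $\upc w$ to the displayed basic open set and invoking Lemma~\ref{lemma:limsup:base} is routine; and the passage from ``$\upc w$ open for all $w$'' to ``every upwards-closed set open'' is immediate since any upwards-closed set is the union of the $\upc w$ it contains. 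I would then remark that combining this proposition with Theorem~\ref{thm:limsup:noeth} and Proposition~\ref{prop:spec} gives the full Property (C): $X^{\leq\omega}$ (resp.\ $X^\omega$) is a wqo iff $X$ is essentially finite, the converse direction following because an Alexandroff Noetherian $X^\omega$ forces $X$ to be so by the argument pattern of Lemma~\ref{lemma:wqo:acc} applied via the initial map $g$.
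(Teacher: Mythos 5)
Your proposal is correct and follows essentially the same route as the paper's proof: reduce to showing that each upward closure $\upc^\omega w$ is open, use essential finiteness to find a cutoff $n_0$ past which every letter of $w$ recurs (up to equivalence) infinitely often, identify $\upc^\omega w$ with a basic open set of the form $\langle \upc w_0; \cdots; \upc w_{n_0-1}; \infy{V_1} \cap \cdots \cap \infy{V_\ell}\rangle$, and prove the reverse inclusion by the greedy letter-by-letter embedding. One small caution: when $w$ is infinite you should \emph{not} union in the ``finite-word part'' $\langle \upc w_0; \cdots; \upc w_{n_0-1}\rangle$ (it contains words that are not $\geq^\omega w$); that set is the correct answer only in the separate case where $w$ itself is a finite word, which is how the paper organizes the case split.
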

\begin{proof}
  We only deal with $X^{\leq\omega}$.  The case of $X^\omega$ will
  follow, because the subspace topology of a space with the Alexandroff
  topology of a preordering $\preceq$ is the Alexandroff topology of
  the restriction of $\preceq$ to the subspace.
  
  Considering Proposition~\ref{prop:spec}, it suffices to show that
  every upwards-closed subset of $X^{\leq \omega}$, with respect to
  $\leq^\omega$, is open in the asymptotic subword topology.  To that
  end, it suffices to show that the upward closure $\upc^\omega w$ of
  any $w \in X^{\leq\omega}$ with respect to $\leq^\omega$ is open,
  since every upwards-closed set is a union of such upward closures.

  If $w$ is finite, then $\upc^\omega w = \langle \upc w_0; \upc w_1;
  \cdots; \upc w_{n-1} \rangle$ where $n$ is the length of $w$.  Note
  that each set $\upc w_i$ is open in $X$, because the topology of an
  essentially finite space is always Alexandroff.

  Let us assume $w$ infinite.  Since $X$ is essentially finite, there
  are only finitely many distinct sets of the form $\upc w_n$,
  $n \in \nat$.  Some of them occur at only finitely many positions
  $n$ in $w$: let $n_0$ be any index exceeding all those positions.
  Then every $\upc w_n$, $n \geq n_0$, is also equal to $\upc w_m$ for
  infinitely many indices $m \geq n_0$.  Let $\{V_1, \cdots, V_\ell\}$
  be the (finite, non-empty) set $\{\upc w_n \mid n \geq n_0\}$, and
  let
  $U \eqdef \langle \upc w_0; \upc w_1; \cdots; \upc w_{n_0-1}; \infy
  {V_1} \cap \cdots \cap \infy {V_\ell}\rangle$.  This is open in the
  asymptotic subword topology.  $U$ contains $w$, by construction.
  Using Fact~\ref{fact:word:limpsup:dc}, $\upc^\omega w$ is entirely
  included in $U$.  Conversely, let $w'$ be any element of $U$.  Then
  $w' = us$ where $u$ is a finite word that contains a letter above
  $w_0$, a later letter above $w_1$, \ldots, and a letter above
  $w_{n_0-1}$, and $s \in X^\omega$ contains infinitely many letters
  from each of $V_1$, \ldots, $V_\ell$---in other words, for every
  $n \geq n_0$, $s$ contains infinitely many letters above $w_n$.
  Hence $s$ contains a letter above $w_{n_0}$, then a later letter
  above $w_{n_0+1}$, etc., so $w_{\geq n_0} \leq^\omega s$.  It
  follows that $w \leq^\omega w'$.  Therefore
  $U \subseteq \upc^\omega w$, whence equality follows.
\end{proof}

\begin{proposition}
  \label{prop:words:wqo}
  Let $X$ be a Noetherian space.  The asymptotic subword topology on
  $X^{\leq\omega}$ (resp., $X^\omega$) is Alexandroff if and only if
  $X$ is essentially finite.
\end{proposition}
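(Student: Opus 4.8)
The plan is to follow the pattern of the proof of Proposition~\ref{prop:pow:wqo}, using finite-or-infinite word products $F_n^{\leq\omega}$ in place of the closed sets $\mathcal F_n$ used there. The forward implication requires nothing new: Proposition~\ref{prop:words:finite} shows that if $X$ is essentially finite, then the asymptotic subword topology on $X^{\leq\omega}$ (resp.\ $X^\omega$) is the Alexandroff topology of $\leq^\omega$, hence is Alexandroff. For the converse, I would assume that $X^{\leq\omega}$ (resp.\ $X^\omega$) is Alexandroff and, using that $X$ is Noetherian, invoke Lemma~\ref{lemma:wqo:acc}~(3): it then suffices to show that the lattice of open subsets of $X$ is well-founded, equivalently that $X$ has no infinite strictly ascending chain of closed subsets.

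So suppose, for contradiction, that $F_0 \subsetneq F_1 \subsetneq \cdots$ is such a chain of closed subsets of $X$; discarding the (necessarily initial) segment of empty members, I may assume every $F_n$ is non-empty. By Fact~\ref{fact:product}, each $F_n^{\leq\omega}$ is closed in $X^{\leq\omega}$ (resp.\ each $F_n^\omega$ is closed in $X^\omega$), and since in an Alexandroff topology the open sets are stable under arbitrary intersections, its closed sets are stable under arbitrary unions; hence $\mathcal C \eqdef \bigcup_n F_n^{\leq\omega}$ (resp.\ $\bigcup_n F_n^\omega$) is closed. Next I choose $a_0 \in F_0$ and, for each $n \geq 1$, a letter $a_n \in F_n \smallsetminus F_{n-1}$ (possible since $F_{n-1} \subsetneq F_n$), and set $w \eqdef a_0 a_1 a_2 \cdots$. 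Since $a_{m+1} \notin F_m$, no single $F_m$ contains all letters of $w$, so $w \notin \mathcal C$. The crux is to show that $w$ nevertheless lies in $cl (\mathcal C) = \mathcal C$. By Lemma~\ref{lemma:limsup:base} it is enough to check that every basic open neighbourhood $\langle U_1; \cdots; U_k; \infy {V_1} \cap \cdots \cap \infy {V_\ell}\rangle$ of $w$ meets $\mathcal C$. Such a neighbourhood picks out positions $p_1 < \cdots < p_k$ with $w_{p_i} \in U_i$, and for each $j \in \{1, \dots, \ell\}$ an arbitrarily large position $q_j$ with $w_{q_j} \in V_j$; letting $M$ be the largest of these finitely many indices, all the letters $w_{p_i}$ and $w_{q_j}$ lie in $F_M$, since $a_m \in F_m \subseteq F_M$ whenever $m \leq M$. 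Then $w' \eqdef w_{p_1} \cdots w_{p_k} (w_{q_1} \cdots w_{q_\ell})^\omega$---padded, in the $X^\omega$ case, with a fixed letter of $F_M$ when $\ell = 0$ so as to stay infinite---lies in that neighbourhood and has all its letters in $F_M$, so $w' \in F_M^{\leq\omega} \subseteq \mathcal C$. Hence $w \in cl (\mathcal C) = \mathcal C$, contradicting $w \notin \mathcal C$. Therefore $X$ has no such chain, and Lemma~\ref{lemma:wqo:acc}~(3) yields that $X$ is essentially finite.

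I expect the main obstacle to be exactly this verification that every basic neighbourhood of $w$ meets $\mathcal C$: it requires working with the explicit base of Lemma~\ref{lemma:limsup:base} rather than with the subbase, and handling the degenerate cases $k = 0$ and $\ell = 0$, as well as the discrepancy between $X^{\leq\omega}$ and $X^\omega$, in which the finite word built above must be padded into an infinite one with a letter of $F_M$ (which is available because we arranged every $F_n$ to be non-empty). The remaining ingredients---closedness of each $F_n^{\leq\omega}$ from Fact~\ref{fact:product}, closedness of $\mathcal C$ from the Alexandroff hypothesis, and the final appeal to Lemma~\ref{lemma:wqo:acc}~(3)---run exactly parallel to the powerset argument of Proposition~\ref{prop:pow:wqo}.
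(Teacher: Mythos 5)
Your proposal is correct and follows essentially the same route as the paper: both directions rely on Proposition~\ref{prop:words:finite} and Lemma~\ref{lemma:wqo:acc}~(3), and the converse is the same contradiction argument — build $w = a_0 a_1 \cdots$ with $a_n \in F_n \smallsetminus F_{n-1}$, note that $\bigcup_n F_n^{\leq\omega}$ (resp.\ $\bigcup_n F_n^\omega$) is closed by the Alexandroff hypothesis yet every basic neighbourhood of $w$ from Lemma~\ref{lemma:limsup:base} meets it via an ultimately periodic word whose letters all lie in some $F_M$. The only cosmetic differences are that the paper reduces the $X^{\leq\omega}$ case to $X^\omega$ via the subspace remark rather than treating both in parallel, and phrases the contradiction as a basic open set that cannot be disjoint from $\mathcal C$ rather than as $w \in cl(\mathcal C)$; your explicit handling of $\ell = 0$ by padding is a small point the paper glosses over.
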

\begin{proof}
  One direction is by Proposition~\ref{prop:words:finite}.  In the
  converse direction, we assume that $X^\omega$ is Alexandroff, and we
  wish to show that $X$ is essentially finite.  The case of
  $X^{\leq\omega}$ reduces to that case: if $X^{\leq\omega}$ is
  Alexandroff, so is it subspace $X^\omega$.

  Let
  $C_0 \subsetneq C_1 \subsetneq \cdots \subsetneq C_n \subsetneq
  \cdots$ be a strictly ascending sequence of closed subsets of $X$,
  and let $x_n$ be a point of $C_{n+1} \diff C_n$ for every
  $n \in \nat$.  For each $n \in \nat$, $C_n^\omega$ is closed in
  $X^\omega$, by
  Theorem~\ref{thm:prod=irred}.
  Let
  $\mathcal C \eqdef \bigcup_{n \in \nat} C_n^\omega$: since the
  topology of $X^\omega$ is Alexandroff, this is again closed.

  Note that $w \eqdef x_0 x_1 \cdots x_n \cdots$ is in no
  $C_n^\omega$, hence not in $\mathcal C$.  By
  Lemma~\ref{lemma:limsup:base}, there is a basic open subset
  $W \eqdef \langle U_1; U_2; \cdots; U_k; \infy {V_1} \cap \cdots
  \cap \infy {V_\ell}\rangle$ of $X^\omega$ that contains $w$ and is
  disjoint from $\mathcal C$.  Since it contains $w$, we can write $w$
  as $w_{<n_0} w_{\geq n_0}$ where
  $w_{<n_0} \in \langle U_1; U_2; \cdots; U_k \rangle$ and
  $w_{\geq n_0}$ contains infinitely many letters from each of $V_1$,
  \ldots, $V_\ell$.  Let us pick one letter $x_{n_1}$ from
  $w_{\geq n_0}$ in $V_1$, \ldots, one letter $x_{n_\ell}$ from
  $w_{\geq n_0}$ in $V_\ell$.  Then the infinite word
  $w_{<n_0} (x_{n_1} \cdots x_{n_\ell})^\omega$ is in $W$, but it is
  also in $C_{n+1}^\omega$, where $n$ is any natural number exceeding
  $\max (n_0, n_1, \cdots, n_\ell)$.  In particular, $W$ intersects
  $\mathcal C$, which is impossible.

  We conclude that there cannot be any infinite strictly ascending
  sequence of closed subsets of $X$.  By Lemma~\ref{lemma:wqo:acc},
  $X$ must be essentially finite.
\end{proof}

\subsection{An S-representation on \texorpdfstring{$X^{\leq\omega}$}{X≤ω} and on \texorpdfstring{$X^\omega$}{Xω}, and Property (D)}
\label{sec:an-s-representation}

Testing inclusion of finite-or-infinite word products is as easy as
testing inclusion of finite word products.
\begin{lemma}
  \label{lemma:irred:subset}
  Let $X$ be a Noetherian space.
  \begin{enumerate}
  \item For all finite-or-infinite word products $PF^{\leq\omega}$ and
    $P'{F'}^{\leq\omega}$,
    $PF^{\leq\omega} \subseteq P' {F'}^{\leq\omega}$ if and only if
    $PF^* \subseteq P' {F'}^*$ and $F \subseteq F'$.
  \item For all infinite word products $PF^\omega$ and
    $P'{F'}^\omega$, $PF^\omega \subseteq P' {F'}^\omega$ if and only
    if $PF^* \subseteq P' {F'}^*$ and $F \subseteq F'$.
  \end{enumerate}
\end{lemma}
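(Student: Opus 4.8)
The plan is to prove item (1) and then reduce item (2) to it. For item (1), the "if" direction is essentially immediate: if $PF^* \subseteq P'{F'}^*$ and $F \subseteq F'$, then since every finite-or-infinite word $w \in PF^{\leq\omega}$ decomposes as $w = uv$ with $u \in PF^*$ (for some finite prefix absorbing enough of the $F$-block) — more carefully, $w = u'v'$ with $u' \in P$ and $v' \in F^{\leq\omega}$, and one writes a finite prefix of $v'$ into the picture — one checks directly from the definitions that $w \in P'{F'}^{\leq\omega}$. Actually the cleanest route here is to invoke Fact~\ref{fact:product}: $PF^{\leq\omega} = \langle\pref,\suf\rangle^{-1}(\dc(PF^*, F))$ and $P'{F'}^{\leq\omega} = \langle\pref,\suf\rangle^{-1}(\dc(P'{F'}^*, F'))$, so the inclusion of the two preimages follows from $\dc(PF^*,F) \subseteq \dc(P'{F'}^*, F')$ in $\Sober(X^*) \times \Hoare(X)$, which is exactly the conjunction $PF^* \subseteq P'{F'}^*$ and $F \subseteq F'$.

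For the "only if" direction, assume $PF^{\leq\omega} \subseteq P'{F'}^{\leq\omega}$. To extract $F \subseteq F'$: pick any $x \in F$ (if $F = \emptyset$ there is nothing to prove); if additionally $P$ contributes at least its finite words, consider a word of the form $ux^\omega$ with $u \in P$ a finite word realizing $P$ as densely as possible — concretely, take $u$ to be a witness that $\pref(u) = P$, or just note $u\,x^\omega \in PF^{\leq\omega}$, hence $u\,x^\omega \in P'{F'}^{\leq\omega}$, so $\suf(ux^\omega) = \dc x \subseteq \suf$-component $\subseteq F'$, giving $x \in F'$. (Here one uses Lemma~\ref{lemma:dc} or Fact~\ref{fact:word:limpsup:dc} to compute $\suf(ux^\omega) = cl(\{x\})$ when $X$ is Noetherian, which intersects $F'$, and since $F'$ is downward closed we get $x \in F'$.) To extract $PF^* \subseteq P'{F'}^*$: take any finite word $v \in PF^*$; then $v$ extended by an infinite $F$-tail, say $v\,x^\omega$ for $x \in F$ (using $F \neq \emptyset$, which we may assume since otherwise $PF^* = P$ and one argues with $P \subseteq P'{F'}^*$ via prefixes directly), lies in $PF^{\leq\omega} \subseteq P'{F'}^{\leq\omega}$, so all its finite prefixes lie in $P'{F'}^*$; in particular $v$ itself, being a finite prefix, lies in $P'{F'}^*$. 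The edge case $F = \emptyset$ needs the separate remark that then $PF^{\leq\omega} = P$ consists of finite words, so the hypothesis forces $P \subseteq P'{F'}^{\leq\omega} \cap X^* \subseteq P'{F'}^*$, and $F = \emptyset \subseteq F'$ trivially.

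For item (2), recall from the proof of Theorem~\ref{thm:prod=irred} that $PF^\omega = PF^{\leq\omega} \cap X^\omega$ with $F$ non-empty, and that $X^\omega$ is an open subspace of $X^{\leq\omega}$. So $PF^\omega \subseteq P'{F'}^\omega$ iff $PF^{\leq\omega} \cap X^\omega \subseteq P'{F'}^{\leq\omega}$. The "if" direction is immediate from item (1). For "only if", I would re-run the two extractions above but using only infinite words: for $F \subseteq F'$, use $u\,x^\omega \in PF^\omega$ for $x \in F$, $u \in P$ a finite word; for $PF^* \subseteq P'{F'}^*$, given a finite $v \in PF^*$, form the infinite word $v\,x^\omega \in PF^\omega$ (legitimate since $F \neq \emptyset$), which lies in $P'{F'}^\omega \subseteq P'{F'}^{\leq\omega}$, hence all its finite prefixes, in particular $v$, lie in $P'{F'}^*$; and then $F \subseteq F'$ by the first part. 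The main obstacle, such as it is, is bookkeeping around the degenerate cases (empty $F$, the empty word product, and making sure the witnessing words actually have the claimed $\pref$ and $\suf$ values), and being careful that "finite prefix of a word in $P'{F'}^{\leq\omega}$ lies in $P'{F'}^*$" — this last point follows because $\pref(w) \subseteq P'{F'}^*$ for $w \in P'{F'}^{\leq\omega}$ by Fact~\ref{fact:product}, and every finite prefix of $w$ belongs to $\pref(w)$.
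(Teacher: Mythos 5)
Your proof is correct and follows essentially the same route as the paper's: the ``if'' direction via Fact~\ref{fact:product}, the ``only if'' direction by testing with the words $x^\omega$ and $v\,x^\omega$ for $x\in F$, the same treatment of the edge case $F=\emptyset$, and the reduction between the two items by intersecting with $X^\omega$. The only (harmless) difference is that where you extract $v\in P'{F'}^*$ from $v\in\pref(v\,x^\omega)\subseteq P'{F'}^*$ using Fact~\ref{fact:product}, the paper instead decomposes $v\,x^\omega$ literally as $uv'$ with $u\in P'$ and argues by cases on whether $u$ is a prefix of $v$.
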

Recall that $F$ and $F'$ are required to be non-empty in infinite word
products, not in finite-or-infinite word products.

\begin{proof}
  We first show: $(i)$ If $F$ is non-empty, then
  $PF^\omega \subseteq P'{F'}^\omega$ implies
  $PF^* \subseteq P' {F'}^*$ and $F \subseteq F'$.  Henceforth, let us
  assume $PF^\omega \subseteq P'{F'}^\omega$.  We first show that $F$
  is included in $F'$.  Since $F$ is non-empty, we can pick an element
  $x$ from $F$.  Note that the empty word $\epsilon$ is in $P$.  Hence
  $x^\omega$ ($=\epsilon x^\omega$) is in $PF^\omega$, and therefore
  in $P' {F'}^\omega$.  This means that we can write $x^\omega$ as
  $uv$ where $u \in P'$ and $v \in {F'}^\omega$; the latter,
  together with the fact that $v = x^\omega$, implies $x \in F'$.  It
  follows that $F \subseteq F'$.  We now claim that
  $PF^* \subseteq P' {F'}^*$.  Let us pick any finite word $w$ from
  $PF^*$.  The infinite word $w x^\omega$ is in $PF^\omega$, hence in
  $P' {F'}^\omega$.  It follows that $w x^\omega$ is of the form $u v$
  where $u \in P'$ and $v \in {F'}^\omega$.  If $u$ is a prefix of
  $w$, then $w$ is equal to the concatenation of $u$ with a prefix of
  $v$, hence is in $P' {F'}^*$.  Otherwise, $w$ is a prefix of $u$.
  Since $u$ is in $P'$, it is also in $P' {F'}^*$, and since the
  latter is closed under $\leq^*$, $w$ is also in $P' {F'}^*$.

  We deduce: $(ii)$ $PF^{\leq\omega} \subseteq P' {F'}^{\leq\omega}$
  implies $PF^* \subseteq P' {F'}^*$ and $F \subseteq F'$.  If $F$ is
  non-empty, then since
  $PF^\omega = PF^{\leq\omega} \cap X^\omega \subseteq
  P'{F'}^{\leq\omega} \cap X^\omega = P'{F'}^\omega$, we can use $(i)$
  and conclude.  Otherwise, since $F=\emptyset$,
  $PF^{\leq\omega} = P$, so $P$ is included in $P'{F'}^{\leq\omega}$.
  Since all the elements of $P$ are finite words, $P$ is in fact
  included in $P'{F'}^{\leq\omega} \cap X^* = P'{F'}^*$.  The
  inequality $F \subseteq F'$ is trivial.
  
  In the converse direction, we have: $(iii)$ if
  $PF^* \subseteq P' {F'}^*$ and $F \subseteq F'$, then
  $PF^{\leq\omega} \subseteq P' {F'}^{\leq\omega}$.  Indeed:
  \begin{align*}
    P F^{\leq\omega}
    & = \langle \pref, \suf\rangle^{-1} (\dc (P F^*, F))
    & \text{(Fact~\ref{fact:product})} \\
    & \subseteq \langle \pref, \suf\rangle^{-1} (\dc (P' {F'}^*, F'))
    = P' {F'}^{\leq\omega}.
  \end{align*}
  Finally: $(iv)$ if $PF^* \subseteq P' {F'}^*$ and $F \subseteq F'$,
  then $PF^\omega \subseteq P' {F'}^\omega$.  This follows from
  $(iii)$ by taking intersections with $X^\omega$.

  Item~1 follows from $(ii)$ and $(iii)$.  Item~2 follows from $(i)$
  and $(iv)$.
\end{proof}


Computing intersections of infinite word products also reduces to the
case of finite word products, as we will see in
Lemma~\ref{lemma:irred:inter} below.  We notice the following.  The
point if that $F_i$ is the same for every $i$.
\begin{lemma}
  \label{lemma:PF*:inter}
  Let $X$ be a Noetherian space.  For all finite word products $P$,
  $P'$, for all closed subsets $F$, $F'$ of $X$, $PF^* \cap P'{F'}^*$
  is a finite union of word products $P_i F_i^*$ where $F_i = F \cap
  F'$ for every $i$.
\end{lemma}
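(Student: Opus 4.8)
The plan is to reduce the statement to the already-established fact that the intersection of two word products is a finite union of word products, and then to normalise the trailing atom of each of those products to $(F\cap F')^*$ by padding.

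First I would recall that $PF^*$ and $P'{F'}^*$ are themselves word products (a word product $A_1\cdots A_N$ followed by the atom $F^*$ is again a word product), hence irreducible closed subsets of $X^*$; since $X^*$ is Noetherian, their intersection $PF^*\cap P'{F'}^*$ is closed, hence a finite union $\bigcup_{i=1}^n Q_i$ of irreducible closed subsets of $X^*$, i.e.\ of word products. (Alternatively this union is produced explicitly by the intersection map $\meet'$ of Proposition~\ref{prop:srepr:X*}, but only its existence is needed here.)

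The key observation is then that $PF^*\cap P'{F'}^*$ is closed under right concatenation with letters of $F\cap F'$: if $w=uv$ with $u\in P$ and $v\in F^*$, and $a\in F$, then $wa=u(va)$ with $va\in F^*$, so $wa\in PF^*$; symmetrically $wa\in P'{F'}^*$ when $a\in F'$. By an immediate induction on length this yields
\[
(PF^*\cap P'{F'}^*)\,(F\cap F')^* = PF^*\cap P'{F'}^* .
\]
Consequently $PF^*\cap P'{F'}^* = \bigcup_{i=1}^n Q_i(F\cap F')^*$ — one inclusion from $\epsilon\in(F\cap F')^*$, the other from the displayed identity together with $Q_i\subseteq PF^*\cap P'{F'}^*$ — and each $Q_i(F\cap F')^*$ is again a word product of the required shape $P_iF_i^*$ with $P_i\eqdef Q_i$ (a word product) and $F_i\eqdef F\cap F'$ (a closed subset of $X$), since $(F\cap F')^*$ is just an atom appended to the word product $Q_i$.

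I do not expect a serious obstacle: the only point needing care is the closure-under-concatenation claim, which is immediate from the definitions of $PF^*$ and of $(\cdot)^*$, plus the bookkeeping that $F\cap F'$ is closed and that appending the atom $(F\cap F')^*$ to a word product yields a word product. If one wanted a purely syntactic argument instead, one could reverse all words (word products are stable under reversal, with atoms reversed) and read the conclusion off clause~\eqref{eq:inter:**} defining $\meet'$ in Proposition~\ref{prop:srepr:X*}, whose output products all begin with an atom of the form ${u''}^*$ corresponding to $(F\cap F')^*$; but the denotational argument above is shorter and independent of the precise form of $\meet'$.
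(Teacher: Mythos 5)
Your proposal is correct and follows essentially the same route as the paper: decompose $PF^*\cap P'{F'}^*$ into finitely many word products $Q_i$ using Noetherianness of $X^*$, then observe that the intersection absorbs right concatenation by $(F\cap F')^*$ to conclude that it equals $\bigcup_i Q_i(F\cap F')^*$. The paper phrases the absorption step as the inclusion $\bigcup_i P_i(F\cap F')^*\subseteq\bigcup_i P_i$ rather than as the identity $(PF^*\cap P'{F'}^*)(F\cap F')^*=PF^*\cap P'{F'}^*$, but the two are the same argument.
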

\begin{proof}
  Since $X^*$ is Noetherian, $PF^* \cap P'{F'}^*$ is a finite union
  $\bigcup_{i=1}^n P_i$ where each $P_i$ is a word product.  For every
  $i$, $1\leq i\leq n$, for every $w \in P_i$, for every
  $w' \in (F \cap F')^*$, $ww'$ is in
  $PF^* (F \cap F')^* \subseteq PF^*$ and in
  $P'{F'}^* (F \cap F')^* \subseteq P'{F'}^*$, hence in
  $PF^* \cap P'{F'}^*$, and therefore in some $P_j$, $1\leq j\leq n$.
  It follows that $\bigcup_{i=1}^n P_i (F \cap F')^* \subseteq
  \bigcup_{i=1}^n P_i$.  The converse inclusion is obvious.
\end{proof}

\begin{lemma}
  \label{lemma:irred:inter}
  Let $X$ be a Noetherian space.  Given any two finite-or-infinite
  word products $PF^{\leq\omega}$ and $P' {F'}^{\leq\omega}$, one can
  write $PF^* \cap P' {F'}^*$ as a finite union of finite word
  products of the form $P_i (F \cap F')^*$, $1\leq i\leq n$, and then
  $PF^{\leq\omega} \cap P' {F'}^{\leq\omega} = \bigcup_{i=1}^n P_i (F
  \cap F')^{\leq\omega}$.
\end{lemma}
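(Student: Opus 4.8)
The plan is to derive both assertions from Lemma~\ref{lemma:PF*:inter} and Fact~\ref{fact:product}. The first assertion is literally Lemma~\ref{lemma:PF*:inter}: it already produces word products $P_1,\dots,P_n$ with $PF^*\cap P'{F'}^*=\bigcup_{i=1}^n P_i(F\cap F')^*$, so nothing is left to do there. For the second assertion I would set $f\eqdef\langle\pref,\suf\rangle$ and rewrite each finite-or-infinite word product in sight as a preimage under $f$ via Fact~\ref{fact:product}:
\begin{align*}
  PF^{\leq\omega} &= f^{-1}(\dc(PF^*,F)), \qquad P'{F'}^{\leq\omega} = f^{-1}(\dc(P'{F'}^*,F')), \\
  P_i(F\cap F')^{\leq\omega} &= f^{-1}\bigl(\dc(P_i(F\cap F')^*,\,F\cap F')\bigr),
\end{align*}
the last line by Fact~\ref{fact:product} applied to the word product $P_i$ and the closed set $F\cap F'$. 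Since $f^{-1}$ commutes with finite intersections and with arbitrary unions, it then suffices to prove, inside $\Sober(X^*)\times\Hoare(X)$, the identity
\[
  \dc(PF^*,F)\cap\dc(P'{F'}^*,F')=\bigcup_{i=1}^n\dc\bigl(P_i(F\cap F')^*,\,F\cap F'\bigr).
\]

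To establish this identity I would argue componentwise. The specialization ordering of $\Sober(X^*)$ and of $\Hoare(X)$ is inclusion, so in the product a principal ideal of a pair is the product of the componentwise principal ideals, and the left-hand side equals $\bigl(\dc(PF^*)\cap\dc(P'{F'}^*)\bigr)\times\bigl(\dc F\cap\dc F'\bigr)$, where $\dc(PF^*)=\{Q\in\Sober(X^*)\mid Q\subseteq PF^*\}$ (recall that $PF^*$, being a word product, is a point of $\Sober(X^*)$) and $\dc F=\{G\in\Hoare(X)\mid G\subseteq F\}$. On the $\Hoare(X)$ factor, $\dc F\cap\dc F'=\dc(F\cap F')$ is immediate. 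On the $\Sober(X^*)$ factor, $Q\in\dc(PF^*)\cap\dc(P'{F'}^*)$ iff $Q\subseteq PF^*\cap P'{F'}^*=\bigcup_{i=1}^n P_i(F\cap F')^*$; since $Q$ is irreducible closed in $X^*$ and each $P_i(F\cap F')^*$ is closed, this is equivalent to $Q\subseteq P_i(F\cap F')^*$ for some $i$, i.e.\ to $Q\in\bigcup_{i=1}^n\dc(P_i(F\cap F')^*)$. Feeding these two computations back in and distributing the Cartesian product over the union yields the displayed identity; applying $f^{-1}$ then gives $PF^{\leq\omega}\cap P'{F'}^{\leq\omega}=\bigcup_{i=1}^n P_i(F\cap F')^{\leq\omega}$.

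The main (and essentially only) obstacle is the componentwise analysis of the intersection of the two principal ideals in $\Sober(X^*)\times\Hoare(X)$: one has to keep in mind that this product is a sober space whose points are pairs ordered by componentwise inclusion, so that principal ideals split as products, and then use irreducibility of the elements of $\Sober(X^*)$ to turn the containment $Q\subseteq\bigcup_i P_i(F\cap F')^*$ into a finite disjunction. Everything else is bookkeeping with Fact~\ref{fact:product} and with the preservation of unions and intersections by $f^{-1}$. One could also sidestep the detour through $\Sober(X^*)\times\Hoare(X)$: the inclusion $\supseteq$ follows directly from Lemma~\ref{lemma:irred:subset}(1), since $P_i(F\cap F')^*\subseteq PF^*\cap P'{F'}^*\subseteq PF^*$ and $F\cap F'\subseteq F$ (and likewise for the primed product), while $\subseteq$ can be shown by splitting an arbitrary $w\in PF^{\leq\omega}\cap P'{F'}^{\leq\omega}$ into a finite prefix lying in $PF^*\cap P'{F'}^*$ and a finite-or-infinite remainder whose letters all lie in $F\cap F'$, using the stationarity of $\suf$ as in Lemma~\ref{lemma:dc}. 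The topological route above is shorter, so I would adopt it.
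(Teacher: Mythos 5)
Your proposal is correct and follows essentially the same route as the paper: invoke Lemma~\ref{lemma:PF*:inter} for the first assertion, then use Fact~\ref{fact:product} to rewrite everything as preimages under $\langle\pref,\suf\rangle$ and reduce to the identity $\dc(PF^*,F)\cap\dc(P'{F'}^*,F')=\bigcup_{i=1}^n\dc(P_i(F\cap F')^*,F\cap F')$ in $\Sober(X^*)\times\Hoare(X)$, which is settled by irreducibility of the elements of $\Sober(X^*)$. Your version merely makes explicit the componentwise splitting of the principal ideals that the paper leaves implicit; the content is the same.
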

\begin{proof}
  The fact that $PF^* \cap P' {F'}^*$ can be written as a finite union
  of finite word products of the form $P_i (F \cap F')^*$,
  $1\leq i\leq n$, is by Lemma~\ref{lemma:PF*:inter}.

  Let
  $\mathcal F \eqdef \{C \in \Sober (X^*) \mid C \subseteq PF^* \cap
  P'{F'}^*\}$.  We claim that
  $\mathcal F = \bigcup_{i=1}^n \dc_{\Sober (X^*)} (P_i (F \cap
  F')^*)$.  For every $C \in \mathcal F$, $C$ is included in
  $PF^* \cap P' {F'}^* = \bigcup_{i=1}^n P_i (F \cap F')^*$, hence in
  some $P_i (F \cap F')^*$ because $C$ is irreducible.  Hence $C$ is
  also in $\bigcup_{i=1}^n \dc_{\Sober (X^*)} (P_i (F \cap F')^*)$.
  Conversely, every element $C$ of
  $\bigcup_{i=1}^n \dc_{\Sober (X^*)} (P_i (F \cap F')^*)$ is included
  in some $P_i (F \cap F')^*$, hence in $PF^* \cap P' {F'}^*$.

  Then
  $\dc (P F^*, F) \cap \dc (P' {F'}^*, F') = \mathcal F \times
  \dc_{\Hoare (X)} (F \cap F')$, so:
  \begin{align*}
    PF^{\leq\omega} \cap P'{F'}^{\leq\omega}
    & = \langle \pref, \suf\rangle^{-1} (\dc (P F^*, F))
      \cap \langle \pref, \suf\rangle^{-1} (\dc (P' {F'}^*, F'))
    \\
    & \qquad\qquad\qquad\text{by Fact~\ref{fact:product}} \\
    & = \langle \pref, \suf\rangle^{-1} (\mathcal F \times \dc_{\Hoare (X)} (F
      \cap F')))
    \\
    & = \langle \pref, \suf\rangle^{-1} (\bigcup_{i=1}^n \dc (P_i (F
      \cap F')^*, F \cap F')) \\
    & = \bigcup_{i=1}^n P_i (F \cap F')^{\leq\omega},
  \end{align*}
  by Fact~\ref{fact:product} again.
\end{proof}

We have a similar result for infinite words.  We only have to pay
attention that $P_i (F \cap F')^*$ is an infinite word product (an
irreducible closed set) if and only if $F \cap F'$ is non-empty.
\begin{lemma}
  \label{lemma:irred:inter:infinite}
  Let $X$ be a Noetherian space.  Given any two infinite word products
  $PF^\omega$ and $P' {F'}^\omega$,
  \begin{itemize}
  \item either $F \cap F'$ is empty and
    $PF^\omega \cap P' {F'}^\omega = \emptyset$;
  \item or $F \cap F'$ is non-empty, and one can write
    $PF^* \cap P' {F'}^*$ as a finite union of finite word products of
    the form $P_i (F \cap F')^*$, $1\leq i\leq n$; in that case,
    $PF^\omega \cap P' {F'}^\omega = \bigcup_{i=1}^n P_i (F \cap
    F')^\omega$.
  \end{itemize}
\end{lemma}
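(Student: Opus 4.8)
The plan is to reduce the statement to Lemma~\ref{lemma:irred:inter} by intersecting with the open subset $X^\omega = \langle \infy X \rangle$ of $X^{\leq\omega}$, after first treating the degenerate case by hand.

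First I would dispose of the case $F \cap F' = \emptyset$. Suppose that some word $w$ belongs to both $PF^\omega$ and $P' {F'}^\omega$. Writing $w = uv$ with $u \in P$ a finite word and $v \in F^\omega$, there is an index $n_0$ (the length of $u$) with $w_n \in F$ for all $n \geq n_0$; symmetrically there is $n_1$ with $w_n \in F'$ for all $n \geq n_1$. Then $w_n \in F \cap F' = \emptyset$ for $n \eqdef \max(n_0, n_1)$, which is absurd; hence $PF^\omega \cap P' {F'}^\omega = \emptyset$, as claimed.

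Now assume $F \cap F' \neq \emptyset$. By Lemma~\ref{lemma:irred:inter}, one can write $PF^* \cap P' {F'}^*$ as a finite union $\bigcup_{i=1}^n P_i (F \cap F')^*$ of finite word products, and since $F \cap F'$ is non-empty each $P_i (F \cap F')^*$ is an infinite word product; the same lemma moreover yields $PF^{\leq\omega} \cap P' {F'}^{\leq\omega} = \bigcup_{i=1}^n P_i (F \cap F')^{\leq\omega}$. Using the defining equalities $PF^\omega = PF^{\leq\omega} \cap X^\omega$, $P' {F'}^\omega = P' {F'}^{\leq\omega} \cap X^\omega$, and $P_i (F \cap F')^\omega = P_i (F \cap F')^{\leq\omega} \cap X^\omega$, together with the distributivity of intersection over unions, I would then compute
\[
  PF^\omega \cap P' {F'}^\omega
  = \bigl( PF^{\leq\omega} \cap P' {F'}^{\leq\omega} \bigr) \cap X^\omega
  = \Bigl( \bigcup_{i=1}^n P_i (F \cap F')^{\leq\omega} \Bigr) \cap X^\omega
  = \bigcup_{i=1}^n P_i (F \cap F')^\omega .
\]

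There is essentially no obstacle here: the entire content sits in Lemma~\ref{lemma:irred:inter}, and this result is merely its restriction to $X^\omega$. The only point calling for a little care is the case split — observing that $P_i (F \cap F')^*$ qualifies as an infinite word product exactly when $F \cap F' \neq \emptyset$, and that when $F \cap F' = \emptyset$ the intersection is genuinely empty (as the direct suffix argument above shows, although term by term one also has $P_i (F \cap F')^\omega = P_i \emptyset^\omega = \emptyset$).
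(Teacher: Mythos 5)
Your proof is correct and follows essentially the same route as the paper: dispose of the case $F\cap F'=\emptyset$ (which the paper simply calls clear, and which you spell out), then apply Lemma~\ref{lemma:irred:inter} and intersect with $X^\omega$, distributing over the finite union. No gaps.
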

\begin{proof}
  If $F \cap F'$ is empty, then it is clear that
  $PF^\omega \cap P' {F'}^\omega = \emptyset$.  Otherwise,
  \begin{align*}
    PF^\omega \cap P' {F'}^\omega
    & = PF^{\leq\omega} \cap P' {F'}^{\leq\omega} \cap X^\omega \\
    & = \bigcup_{i=1}^n P_i (F \cap
      F')^{\leq\omega} \cap X^\omega
    & \text{by Lemma~\ref{lemma:irred:inter}} \\
    & = \bigcup_{i=1}^n P_i (F \cap F')^\omega.
  \end{align*}
\end{proof}

Let us turn to actual S-representations.  We assume an
S-representation $(S, \SEval \_, \unlhd, \allowbreak \topcap, \meet)$
of $X$.  For every finite subset $u \eqdef \{a_1, \cdots, a_n\}$ of
$S$, let $\Eval u$ denote $\Eval {a_1} \cup \cdots \cup \Eval {a_n}$.
For all finite subsets $u$ and $v$ of $S$, we also write $u\meet v$
for $\bigcup_{a \in u, b \in v} a \meet b$, so that
$\Eval {u\meet v} = (\bigcup_{a \in u} \Eval a) \cap (\bigcup_{b \in
  v} \Eval {b}) = \Eval u \cap \Eval v$.

Lemma~\ref{lemma:PF*:inter} has the following computable equivalent,
which says that for \emph{syntactic} word products $Pu^*$ and
$Q {v}^*$, $Pu^* \meet' Q{v}^*$ computes the intersection
$\SEval {Pu^*}' \cap \SEval {Q{v}^*}' = \SEval {P}' \Eval u^* \cap
\SEval {Q}' \Eval {v}^*$ as a finite set of syntactic word products of
the form $P_i {(u\meet v)}^*$.  For this result, we need to use the
optimized version of $\meet'$ described in Remark~\ref{rem:meet'}.
\begin{lemma}
  \label{lemma:PF*:inter:Srepr}
  Let $X$ be a Noetherian space, and
  $(S, \SEval \_, \unlhd, \allowbreak \topcap, \meet)$ be an
  S-representation of $X$.  For all (syntactic) word products of the
  form $Pu^*$ and $Q{v}^*$, their intersection, as computed using
  Proposition~\ref{prop:srepr:X*}, item~(5), and removing subsumed
  word products as per Remark~\ref{rem:meet'}, is a finite set of word
  products of the form $R{(u\meet v)}^*$.
\end{lemma}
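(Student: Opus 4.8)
The plan is to prove, by induction on the total number of atoms occurring in the two syntactic word products, the following statement, of which the lemma is an instance: for all syntactic word products $P$, $P'$ and all finite subsets $u,u'\subseteq S$, the result of computing $Pu^*\meet' P'{u'}^*$ by the clauses of Proposition~\ref{prop:srepr:X*}, item~(5), while discarding subsumed word products (as in Remark~\ref{rem:meet'}) after each recursive invocation, is a finite set of word products each ending with the atom $u''^*$, where $u''\eqdef\bigcup_{a\in u,a'\in u'}a\meet a'$. I adopt throughout the convention, already used in the proof of Proposition~\ref{prop:srepr:X*}, that an atom $\emptyset^*$ may be freely erased, so that the empty word product $\varepsilon$ and the atom $\emptyset^*$ are identified; in particular when $u=\emptyset$ or $u'=\emptyset$ we have $u''=\emptyset$ and there is nothing to prove. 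The induction proceeds on the leading atoms, even though the star atoms $u^*$, ${u'}^*$ sit at the \emph{end}, because every recursive call made by the clauses of item~(5) is on a pair of \emph{suffixes} of the current pair of word products; hence every word product occurring along the recursion is either $\varepsilon$ or still ends with $u^*$, resp.\ ${u'}^*$, so that $u$, $u'$, and $u''$ are invariants of the recursion.

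When $P=P'=\varepsilon$, unfolding clause~(\ref{eq:inter:**}) together with $(\ref{eq:inter:eps})$ gives at once $u^*\meet'{u'}^*=\{u''^*\}$, which is of the required form. For the inductive step one follows the five clauses. In clauses $(\ref{eq:inter:??})$, $(\ref{eq:inter:?*})$ and the symmetric $*?$ clause, each recursive call has strictly fewer atoms and still ends with $u^*$, resp.\ ${u'}^*$, so the induction hypothesis yields word products ending with $u''^*$; prepending a $?$-atom does not change the last atom, and discarding subsumed word products only removes elements, so these cases are immediate --- with one caveat. A recursive call may have consumed one side down to $\varepsilon$ and hence return $\{\varepsilon\}$; this occurs only inside a branch whose sibling branch (the first set in $(\ref{eq:inter:?*})$, resp.\ its mirror image) is nonempty and, by the induction hypothesis, contains a word product ending with $u''^*$ --- nonempty because every word-product language contains the empty word, so any intersection of two of them does, so $\meet'$ never returns $\emptyset$. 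Since $\varepsilon\unlhd'$ that word product, the stray $\varepsilon$ is discarded.

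The crux is clause~(\ref{eq:inter:**}). With $v$, $v'$ the \emph{leading} star-atom sets of the current arguments, that clause recurses on $P\meet'{u'}^*P'$ and $u^*P\meet' P'$ --- both with strictly fewer atoms and still ending with the final atoms $u^*$, ${u'}^*$ --- and prepends the atom $w^*$, where $w\eqdef\bigcup_{a\in v,a'\in v'}a\meet a'$. If $v=u$ and $v'=u'$, then $w^*=u''^*$ and the result is exactly $\{u''^*\}$ as above. Otherwise $w^*$ is prepended to a set which, by the induction hypothesis, still contains a word product $W$ ending with $u''^*$, producing $w^*W$, which again ends with $u''^*$, and possibly also $w^*$ itself (coming from the $\{\varepsilon\}$ branch). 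But $w^*$ is subsumed by $w^*W$: indeed $\SEval{w^*}'=\SEval{w^*}'\{\epsilon\}\subseteq\SEval{w^*}'\,\SEval{W}'=\SEval{w^*W}'$, since $\epsilon\in\SEval{W}'$. Hence after discarding subsumed word products only word products ending with $u''^*$ survive, which completes the induction; the lemma is the case where $u$, $u'$ are the final star-atom sets of $Pu^*$, $P'{u'}^*$.

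The step I expect to be the main obstacle is precisely this last one: the mechanical unfolding of item~(5) does \emph{not} keep the trailing star atom equal to $u''^*$ --- it produces trailing atoms $w^*$ built from \emph{intermediate} star-atom sets $v$, $v'$ --- so the statement genuinely needs the subsumption-removal optimization of Remark~\ref{rem:meet'} together with the elementary observation that the empty word lies in every word-product language. Secondary points to handle carefully are the bookkeeping around the degenerate atoms $\emptyset^*$, identified with $\varepsilon$, and the fact that when two surviving word products happen to be $\unlhd'$-equivalent one selects the representative of the form $P''u''^*$.
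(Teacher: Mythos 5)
Your proof is correct and follows essentially the same route as the paper's: induction on the number of atoms, the observation that the recursion only peels off leading atoms so that the trailing star atoms $u^*$, ${u'}^*$ (hence ${u''}^*$) are invariant, the base case $u^*\meet'{u'}^*=\{{u''}^*\}$, and the crucial use of subsumption removal together with the fact that every word-product language contains $\epsilon$ to discard the stray $\varepsilon$ (resp.\ ${v'}^*$) terms. The only cosmetic difference is that you treat clause~(\ref{eq:inter:**}) uniformly for arbitrary leading star atoms, whereas the paper dispatches the case where both prefixes are nonempty by a direct appeal to the induction hypothesis and isolates the cases $a^?Pu^*\meet'{u'}^*$ and $v^*Pu^*\meet'{u'}^*$.
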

\begin{proof}
  By induction on the sum of the length $n$ of $P$ and the length $n'$
  of $Q$.  This is a direct appeal to the induction hypothesis if $n
  \geq 1$ and $n' \geq 1$.  The interesting case is when $n'=0$ (or,
  symmetrically, $n=0$).  If $n=n'=0$, then $Pu^* \meet' Q{v}^* = u^*
  \meet' {v}^* = \{{(u\meet v)}^* R \mid R \in (\varepsilon \meet' {v}^*)
  \cup (u^* \meet' \varepsilon)\} = \{{(u\meet v)}^*\}$, by
  (\ref{eq:inter:**}) and (\ref{eq:inter:eps}).  If $n \geq 1$ and $n'
  = 0$, then we need to show the claim for intersections of the form: (1)
  $a^? P u^* \meet' {v}^*$ and (2) $u_0^* P u^* \meet' {v}^*$.
  
  In case (1), we use (\ref{eq:inter:?*}): the elements of
  $a^? P u^* \meet' {v}^*$ are of the form ${c}^? R$ (where $c$ ranges
  over $u\meet v$, if $u\meet v \neq \emptyset$) or just $R$ (if
  $u\meet v = \emptyset$), where $R \in P u^* \meet' {v}^*$, plus
  elements of $a^? P u^* \meet' \varepsilon = \{\varepsilon\}$.  Since
  we remove subsumed word products, as per Remark~\ref{rem:meet'}, the
  latter elements do not occur.  The elements that remain are of the
  form ${c}^? R$ or just $R$, where $R \in P u^* \meet' {v}^*$ has the
  required form by induction hypothesis.
  
  In case (2), we use (\ref{eq:inter:**}): the elements of
  $u_0^* P u^* \meet' {v}^*$ are of the form ${(u_0\meet v)}^* R$
  where
  $R \in (P u^* \meet' {v}^*) \cup (u_0^* P u^* \meet' \varepsilon) =
  (P u^* \meet' {v}^*) \cup \{\varepsilon\}$.  Let us enumerate
  $P u^* \meet' {v}^*$: by induction hypothesis, we can write its
  elements as $R_1 {(u\meet v)}^*$, \ldots, $R_n {(u\meet v)}^*$.  We
  note that $n \geq 1$, because $\SEval {Pu^*}' \cap \SEval {{v}^*}$
  is non-empty: indeed, that intersection contains the empty word
  $\epsilon$.  Then the element ${(u_0 \meet v)}^* \varepsilon$ of the
  set $u_0^* P u^* \meet' {v}^*$ is subsumed by, say,
  ${(u_0\meet v)}^* R_1 {(u\meet v)}^*$, and will be removed,
  following Remark~\ref{rem:meet'}.  The only elements that remain are
  ${(u_0\meet v)}^* R_i {(u\meet v)}^*$, when $i$ varies over some
  subset of $\{1, \cdots, n\}$, and they are of the required form.
\end{proof}

Instead of redesigning an S-representation for $X^{\leq\omega}$ (or
$X^\omega$) from scratch, this allows us to reuse most of what we know
for $X^*$.  Item~(3) below is justified by
Lemma~\ref{lemma:irred:subset}, and item~(5) is by
Lemma~\ref{lemma:irred:inter} (resp.,
Lemma~\ref{lemma:irred:inter:infinite}), refined using
Lemma~\ref{lemma:PF*:inter:Srepr} (i.e., every element of
$P u^* \meet' Q {v}^*$ is of the form $R {(u\meet v)}^*$ for some $R$,
where $u\meet v \eqdef \bigcup_{a \in u, b \in v} (a \meet b)$).
\begin{thm}
  \label{thm:srepr:Xomega}
  Given an S-representation
  $(S, \SEval \_, \unlhd, \allowbreak \topcap, \meet)$ of a Noetherian
  space $X$, let $(S', \SEval \_', \unlhd', \topcap', \meet')$ be the
  S-representation of $X^*$ given in Proposition~\ref{prop:srepr:X*},
  with the optimization of Remark~\ref{rem:meet'}.  Then the following
  tuple $(S'', \SEval \_'', \unlhd'', \topcap'', \meet'')$ is an
  S-representation of $X^{\leq\omega}$ (resp., $X^\omega$):
  \begin{enumerate}
  \item $S''$ is the collection of pairs $(P, u)$ where $P \in S'$ and
    $u$ is a finite (resp., and non-empty) subset of $S$.
  \item
    $\SEval {(P, u)}'' = \SEval {P}' (\bigcup_{a \in u} \Eval
    a)^{\leq\omega}$ (resp.,
    $\SEval {P}' (\bigcup_{a \in u} \Eval a)^\omega$).
  \item $(P, u) \unlhd'' (Q, v)$ if and only if $P u^* \unlhd' Q
    {v}^*$ and for every $a \in u$, there is an $b \in v$ such that
    $a \unlhd b$.
  \item $\topcap''$ is $(\varepsilon, \topcap)$.
  \item $(P, u) \meet'' (Q, v)$ is defined as follows: writing
    $u\meet v$ for $\bigcup_{a \in u, b \in v} (a \meet b)$,
    $P u^* \meet' Q {v}^*$ is a finite set of finite word products of
    the form $R_i {(u\meet v)}^*$, $1\leq i\leq n$, and
    $(P, u) \meet'' (Q, v)$ is then the set of pairs
    $(R_i, u\meet v)$, $1\leq i\leq n$ (resp., the same formula if
    $u\meet v \neq \emptyset$, otherwise $\emptyset$).  \qed
  \end{enumerate}
\end{thm}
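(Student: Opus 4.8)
The plan is to verify, one at a time, the five defining conditions of an S-representation (items (1)--(5) of Section~\ref{sec:preliminaries}) for the tuple $(S'', \SEval{\_}'', \unlhd'', \topcap'', \meet'')$ of the statement, treating the $X^{\leq\omega}$ and $X^\omega$ cases in parallel. For a code $(P,u) \in S''$ I abbreviate $Q \eqdef \SEval P'$ and $F \eqdef \bigcup_{a \in u}\SEval a$ (and likewise $Q',F'$ for $(P',u')$), so that $\SEval{(P,u)}'' = QF^{\leq\omega}$ (resp.\ $QF^\omega$) while $\SEval{Pu^*}' = QF^*$; here $Q$ is a word product and $F$ a closed subset of $X$ that is non-empty precisely when $u$ is non-empty.

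\emph{Conditions (1) and (2).} Recursive enumerability of $S''$ is immediate: $S'$ is r.e.\ by Proposition~\ref{prop:srepr:X*}, $S$ is r.e., and $S''$ is cut out of $S' \times \Pfin(S)$ by a decidable side condition (none for $X^{\leq\omega}$; ``$u \neq \emptyset$'' for $X^\omega$). For surjectivity of the evaluation map, each $\SEval{(P,u)}''$ is a finite-or-infinite word product (resp.\ an infinite word product, since $u \neq \emptyset$ forces $F$ non-empty), hence an element of $\Sober(X^{\leq\omega})$ (resp.\ $\Sober(X^\omega)$) by Theorem~\ref{thm:prod=irred}; conversely any such product $QF^{\leq\omega}$ (resp.\ $QF^\omega$ with $F \neq \emptyset$) arises from a code $(P,u)$ by taking $P \in S'$ with $\SEval P' = Q$ (surjectivity of $\SEval{\_}'$), writing $F = C_1 \cup \cdots \cup C_k$ with each $C_i \in \Sober X$ (possible because $X$ is Noetherian; $k \geq 1$ iff $F \neq \emptyset$), and taking $a_i \in S$ with $\SEval{a_i} = C_i$, so $u \eqdef \{a_1,\dots,a_k\}$.

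\emph{Conditions (3) and (5).} $\unlhd''$ is decidable since $\unlhd'$ and $\unlhd$ are and $u,u'$ are finite; for correctness, Lemma~\ref{lemma:irred:subset} reduces $QF^{\leq\omega} \subseteq Q'{F'}^{\leq\omega}$ (resp.\ the $\omega$-version) to ``$QF^* \subseteq Q'{F'}^*$ and $F \subseteq F'$'', the first conjunct being $\SEval{Pu^*}' \subseteq \SEval{P'{u'}^*}'$, i.e.\ $Pu^* \unlhd' P'{u'}^*$ by correctness of $\unlhd'$, and the second, by irreducibility of each $\SEval a$, being exactly ``every $a \in u$ is $\unlhd$-below some $a' \in u'$''. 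For (5), $\meet''$ is computable since $\meet'$ is and $u'' \eqdef \bigcup_{a \in u,\, a' \in u'}(a \meet a')$ costs finitely many calls to $\meet$; observe $\bigcup_{a'' \in u''}\SEval{a''} = \bigcup_{a \in u,\, a' \in u'}(\SEval a \cap \SEval{a'}) = F \cap F'$. For correctness I would chain three facts: since $\meet'$ is the intersection map of the S-representation of $X^*$ (Proposition~\ref{prop:srepr:X*}) the union of $Pu^* \meet' P'{u'}^*$ denotes $QF^* \cap Q'{F'}^*$, and pruning subsumed products preserves this; by Lemma~\ref{lemma:PF*:inter:Srepr} (with the optimized $\meet'$ of Remark~\ref{rem:meet'}) that set is $\{P_1''{u''}^*,\dots,P_n''{u''}^*\}$ with common tail $u''$, so that $QF^* \cap Q'{F'}^* = \bigcup_{i=1}^n \SEval{P_i''}'(F \cap F')^*$, a decomposition of the shape hypothesized in Lemma~\ref{lemma:irred:inter} (resp.\ Lemma~\ref{lemma:irred:inter:infinite}); that lemma then yields $QF^{\leq\omega} \cap Q'{F'}^{\leq\omega} = \bigcup_{i=1}^n \SEval{P_i''}'(F \cap F')^{\leq\omega} = \bigcup_{i=1}^n \SEval{(P_i'',u'')}''$ (for $X^\omega$, the same with $\omega$ for $\leq\omega$ when $u'' \neq \emptyset$; when $u'' = \emptyset$ one has $F \cap F' = \emptyset$ and both sides equal $\emptyset$, matching $(P,u)\meet''(P',u') = \emptyset$). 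This is exactly $\bigcup_{c \in (P,u)\meet''(P',u')}\SEval c''$.

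\emph{Condition (4), and the main obstacle.} Finally $\topcap'' = \{(\varepsilon,\topcap)\}$ works because $\SEval{(\varepsilon,\topcap)}'' = \{\epsilon\}(\bigcup_{a \in \topcap}\SEval a)^{\leq\omega} = X^{\leq\omega}$ (resp.\ $X^\omega$), using $\bigcup_{a \in \topcap}\SEval a = X$ from condition (4) of the S-representation of $X$; the degenerate case $X = \emptyset$ is covered by taking $\topcap'' = \emptyset$ in the $X^\omega$ case. The step I expect to be the genuine obstacle is condition (5): one has to keep the \emph{syntactic} computation of $\meet'$ (with subsumed word products pruned) aligned with the semantic intersection \emph{and} have it present that intersection in the single-common-tail form $\bigcup_i \SEval{P_i''}'(F \cap F')^*$ needed to feed Lemma~\ref{lemma:irred:inter}. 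This alignment is precisely what Lemma~\ref{lemma:PF*:inter:Srepr} was designed to supply, so once that lemma is in place the remainder is the routine bookkeeping indicated above.
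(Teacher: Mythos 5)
Your proposal is correct and follows essentially the same route as the paper, which justifies item (3) by Lemma~\ref{lemma:irred:subset} and item (5) by Lemma~\ref{lemma:irred:inter} (resp.\ Lemma~\ref{lemma:irred:inter:infinite}) refined via Lemma~\ref{lemma:PF*:inter:Srepr}, leaving the remaining conditions as routine; you simply spell out those routine parts (surjectivity via Theorem~\ref{thm:prod=irred}, decidability, the $\topcap$ edge case) in more detail.
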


\section{Final notes}
\label{sec:final-notes}

\paragraph{\emph{Related work.}}

We must cite Simon Halfon's PhD thesis \cite{Halfon:PhD}, and
especially Section~9.1 there.  Our study of $X^\omega$ is very close
to his.  At first glance, it may seem that we add some generality to
his study, in the sense that Halfon studies $X^\omega$ (as a
preordered set) in the special case where $X$ is an $\omega^2$-wqo.
In that case, $X^\omega$ is wqo (as a set preordered by
$\leq^\omega$).

In a world of preorders, it is natural to replace sobrifications by
ideal completions.  Indeed, the ideal completion of a preordered set
$X$ coincides with the sobrification of $X$, provided that $X$ is
given its Alexandroff topology.  Halfon obtains that the ideal
completion of $X^\omega$ (as a preordered set) is characterized in
terms of $\omega$-regular expressions that are similar to the infinite
word products we introduce in Section~\ref{sec:sobrification-xomega},
although slightly more complicated, as the $F^\omega$ part of
$\omega$-regular expressions no longer involves elements $F$ of
$\Hoare (X)$ but ideals of $\Hoare (X)$.  The mismatch is due to the
fact that our space $X^\omega$ will almost never have an Alexandroff
topology (unless $X$ is essentially finite, see
Proposition~\ref{prop:words:wqo}), and therefore the ideal completion
of $X^\omega$ in general differs from $\Sober (X^\omega)$ (where
$X^\omega$ is given the asymptotic subword topology, as we do, not the
Alexandroff topology of $\leq^\omega$), for every $\omega^2$-wqo $X$
that is not essentially finite.

\paragraph{\emph{Other initial maps.}}

Our study of $X^{\leq\omega}$ (resp., $X^\omega$) proceeds by finding
an initial map $\langle \pref, \suf \rangle$ from $X^{\leq\omega}$ to
the more familiar space $\Sober (X^*) \times \Hoare (X)$.  This has
notable advantages.  For example, the fact that $X^{\leq\omega}$ (and
its subspace $X^\omega$) is Noetherian if and only if $X$ is follows
immediately from previously known results on sobrifications, on the
Hoare powerspace, and on spaces of \emph{finite} words.  We took this
further in the study of S-representations of $X^{\leq\omega}$ (resp.,
$X^\omega$), where we insisted on \emph{reducing} the question to
S-representations for finite words (and powersets).  We could have
computed intersections of infinite word products directly, notably,
but we feel that would have been less interesting.

Remarkably, there are many other initial maps that we could have used
instead of $\langle \pref, \suf \rangle$.  The advantage of the latter
is that it shows how the asymptotic subword topology splits into the study of
finite chunks of information (prefixes) and infinite behaviors
(suffixes).  Here are two different initial maps that we could have
used.

The first one is the composition:
\[
  \xymatrix{
    X^{\leq\omega} \ar[r]^(0.3){\langle \pref, \suf \rangle}
    & \Sober (X^*) \times \Hoare (X)
    \ar[r]^(0.35){\identity \relax \times j}
    & \Sober (X^*) \times \Sober (X^*)
    \cong
    \Sober (X^* \times X^*)
    \ar[r]^(0.65){\Sober (c)}
    & \Sober ((X+X)^*)
  }
\]
where $j \colon \Hoare (X) \to \Sober (X^*)$ maps $F$ to $F^*$, and
$c$ maps every pair of finite words $(a_1 \cdots a_m, b_1 \cdots b_n)$
to the word
$\iota_1 (a_1) \cdots \iota_1 (a_m) \iota_2 (b_1) \cdots \iota_2
(b_n)$, where $\iota_1$, $\iota_2$ are the two canonical injections of
$X$ into $X+X$.  Note that
$j^{-1} (\diamond \langle U_1 ; U_2 ; \cdots U_n \rangle) = \Diamond
U_1 \cap \Diamond U_2 \cap \cdots \cap \Diamond U_n$, showing that $j$
is initial.  As for $c$, every open subset of $X+X$ can be written as
$U+V$ where $U$ and $V$ are open in $X$, and we have
$c^{-1} (\langle U_1+V_1; \cdots ; U_n+V_n \rangle) = \bigcup_{k=0}^n
\langle U_1 ; \cdots ; U_k \rangle \times \langle V_{k+1} ; \cdots
; V_n \rangle$, showing that $c$ is continuous, and as special case
(with $n=2$, $V_1$ and $U_2$ empty) that
$\langle U_1 \rangle \times \langle V_2 \rangle = c^{-1} (\langle
U_1+\emptyset; \emptyset+V_2 \rangle)$, which allows us to conclude
that $c$ is initial with the help of Remark~\ref{rem:initial}.  This
implies that $\Sober (c)$ is initial, hence that the whole composition
shown above is initial, too.  The point of using this, as an
alternative to $\langle \pref, \suf \rangle$, is to realize that using
the Hoare powerspace is not required at all, and that the study of
$X^\omega$ reduces to the study of the sobrification of a space of
finite words only, on the extended alphabet $X+X$.

A second alternative to $\langle \pref, \suf \rangle$ is the following
map $q \colon X^{\leq\omega} \to \Sober ((X+X)^*)$ (see
Appendix~\ref{sec:q-initial} for a proof that $q$ is initial, when $X$
is Noetherian).  For short, let us write $-a \eqdef (0, a)$ for every
$a \in X$, $+a \eqdef (1, a)$, $\pm A \eqdef \{-a, +a \mid a \in A\}$
for every $A \subseteq X$.  For every $w \in X^{\leq\omega}$ and every
$n \in \dom w$, let
$q_n (w) \eqdef (\dc -w_0)^? (\dc -w_1)^?  \cdots (\dc -w_n)^?
\allowbreak (cl (\pm \{w_m \mid m \geq n+1\}))^*$.  The sequence
${(q_n (w))}_{n \in \nat}$ is a descending sequence of (irreducible)
closed sets.  When $X$ is Noetherian, there must therefore be an index
$n_0 \in \nat$ such that $q_n (w) = q_{n_0} (w)$ for every
$n \geq n_0$, and we define $q (w)$ as $q_{n_0} (w)$.  Notice the
similarity with Remark~\ref{rem:pref:PF*}.  Note also that $q$ is
slightly different from our previous alternative, which maps $w$ to
$(\dc -w_0)^? (\dc -w_1)^?  \cdots (\dc -w_n)^? (cl (\{-w_m \mid m
\geq n+1\}))^* (cl (\{+w_m \mid m \geq n+1\}))^*$ instead.  A similar
approach will turn out to be the right one in our study of infinite
trees (which should be part~III of this work).

\paragraph{\emph{Transfinite sequences.}}

We have dealt with the space $X^\omega$, but what would be a proper,
analogous treatment of spaces of sequences of length $\alpha$, for an
arbitrary (or countable) indecomposable ordinal $\alpha$?  The bqo
theory of such preordered sets is well-known \cite{NW:bqo}.  We will
deal with that aspect in part~II.

\subsection*{Acknowledgements}

I would like to thank the anonymous referee for her/his valuable
comments, in particular for insisting on some small issues that I
missed repeatedly.

\DeclareRobustCommand{\VAN}[3]{#3}

\bibliographystyle{abbrv}
\ifarxiv

\else
\bibliography{noeth}
\fi

\appendix

\section{\texorpdfstring{$q$}{q} is initial}
\label{sec:q-initial}

We use an alternate definition of $q$.  Given any $w \in X^\omega$,
let $A_n \eqdef \{w_m \mid m \geq n+1\}$.  Then
${(cl (\pm A_n))}_{n \in \nat}$ is a descending sequence of closed
subsets of $X+X$.  If $X$ is Noetherian, then there must be an index
$n_1$ such that for every $n \geq n_1$,
$cl (\pm A_n) = cl (\pm A_{n_1})$.  We pick $n_1$ larger than or equal
to the $n_0$ given in the definition of $q$.  Then
$q (w) \eqdef (\dc -w_0)^? (\dc -w_1)^?  \cdots (\dc -w_n)^? (cl
(\pm A_n))^*$ for every $n \geq n_1$, by definition of $q$.

We proceed and show that $q$ is continuous.

For that, we claim that: $(*)$
$q^{-1} (\diamond \langle U_1\allowbreak + V_1 ; \cdots ;
U_\ell+V_\ell \rangle)$ is equal to
$\bigcup_{k=0}^\ell \langle U_1 ; \cdots ; U_k ; \infy {(U_{k+1} \cup
  \allowbreak V_{k+1})} \cap \cdots \cap \infy {(U_\ell \cup V_\ell)}
\rangle$, where $U_1$, $V_1$, \ldots, $U_\ell$, $V_\ell$ are arbitrary
open subsets of $X$.

Let $w \in X^{\leq\omega}$, let us fix $n \eqdef n_1$ in the
definition of $q (w)$, and let us imagine that $q (w)$ is in
$\diamond \langle U_1\allowbreak + V_1 ; \cdots ; U_\ell+V_\ell
\rangle$.  There are letters $a_1 \in U_1+V_1$, \ldots,
$a_\ell \in U_\ell + V_\ell$, and indices $k$ ($0 \leq k \leq \ell$)
and $i_1 < \cdots < i_k$ between $0$ and $n_1-1$ such that
$a_1 \leq -w_{i_1}$, \ldots, $a_k \leq -w_{i_k}$, and $a_{k+1}$,
\ldots, $a_\ell$ are all in $cl (\pm A_{n_1})$.  In particular, every
$U_i+V_i$ with $i \geq k+1$ intersects $cl (\pm A_{n_1})$---which is
equal to $cl (\pm A_n)$ for every $n \geq n_1$---hence also $\pm A_n$
for every $n \geq n_1$.  This means that there are infinitely many
indices $n \geq n_1$ such that $-w_n$ or $+w_n$ is in $U_i + V_i$, in
particular such that $w_n \in U_i \cup V_i$, and that holds for every
$i \geq k+1$.  Therefore $w$ is in
$\langle U_1 ; \cdots ; U_k ; \infy {(U_{k+1} \cup \allowbreak
  V_{k+1})} \cap \cdots \cap \infy {(U_\ell \cup V_\ell)} \rangle$.

In the reverse direction, let $w$ be in
$\langle U_1 ; \cdots ; U_k ; \infy {(U_{k+1} \cup \allowbreak
  V_{k+1})} \cap \cdots \cap \infy {(U_\ell \cup V_\ell)} \rangle$ for
some $k$, $0\leq k \leq \ell$.  Let us write $w$ as $us$ where
$u \in \langle U_1 ; \cdots ; U_k \rangle$ and (if $\ell > k$) $s$
contains infinitely many letters from each $U_i \cup V_i$,
$k+1 \leq i \leq \ell$.  There are letters $a_1 \in U_1$, \ldots,
$a_k \in U_k$ such that $a_1 \cdots a_k$ is a subword of $u$.  We
write again $q (w)$ as
$(\dc -w_0)^? (\dc -w_1)^?  \cdots (\dc -w_{n-1})^? (cl (\pm A_n))^*$,
with $n \geq n_1$ arbitrary.  We pick such an $n$ so that it exceeds
the length of $u$.  This way, the finite word $(-a_1) \cdots (-a_k)$
is in $(\dc -w_0)^? (\dc -w_1)^?  \cdots (\dc -w_{n-1})^?$.  For each
$i$, $k+1 \leq i \leq \ell$, since $s$ contains infinitely many
letters from $U_i \cup V_i$, so does $w$, and we can therefore find at
least one of the form $w_m$ with $m \geq n$, hence in $A_n$.  This
implies that $A_n$ intersects $U_i \cup V_i$.  We pick a letter $b_i$
in the intersection, for each $i$ with $k+1 \leq i \leq \ell$.  If
$b_i$ is in $U_i$, we let $c_i \eqdef -b_i$, otherwise
$c_i \eqdef +b_i$, so that $c_i$ is in $U_i+V_i$.  Then the word
$(-a_1) \cdots (-a_k) c_{k+1} \cdots c_\ell$ is in $q (w)$, and in
$\langle U_1 ; \cdots ; U_k ; U_{k+1} \cup \allowbreak V_{k+1}; \cdots
; U_\ell \cup V_\ell \rangle$.  It follows that $q (w)$ is in
$\diamond \langle U_1\allowbreak + V_1 ; \cdots ; U_\ell+V_\ell
\rangle$.

That finishes to prove $(*)$, hence that $q$ is continuous.

Specializing $(*)$ to the case where $V_1$, \ldots, $V_j$, $U_{j+1}$,
\ldots, $U_\ell$ are empty (for some arbitrary $j$,
$0 \leq j \leq \ell$), the terms
$\langle U_1 ; \cdots ; U_k ; \infy {(U_{k+1} \cup \allowbreak
  V_{k+1})} \cap \cdots \cap \infy {(U_\ell \cup V_\ell)} \rangle$
with $k \geq j+1$ are all empty (because $U_k$ is empty).  The same
terms with $k \leq j$ instead are of the form
$\langle U_1 ; \cdots ; U_k ; \infy {U_{k+1}} \cap \cdots \cap \infy
{U_j} \cap \infy {V_{j+1}} \cap \cdots \cap \infy {V_\ell} \rangle$,
and it is easy to see that they are all included in the term obtained
when $k = j$, namely
$\langle U_1 ; \cdots ; U_j ; \infy {V_{j+1}}
\cap \cdots \cap \infy {V_\ell} \rangle$.   It follows that
$q^{-1} (\diamond \langle U_1\allowbreak + \emptyset ; \cdots ; U_j +
\emptyset; \emptyset+V_{j+1} ; \cdots \emptyset+V_\ell \rangle)$ is
equal to
$\langle U_1 ; \cdots ; U_j ; \infy {V_{j+1}} \cap \cdots \cap \infy
{V_\ell} \rangle$.  The latter is the general form of the basic open
subsets on $X^{\leq\omega}$ given in Lemma~\ref{lemma:limsup:base}.
Using Remark~\ref{rem:initial}, $q$ is initial.















\end{document}